\newcommand{\Z}{\mathbb{Z}}
\newcommand{\R}{\mathbb{R}}
\newcommand{\C}{\mathbb{C}}
\newcommand{\Cs}{\ensuremath{C^*}}
\newcommand{\Csr}{\ensuremath{C_r^*}}
\newcommand{\E}{\ensuremath{\mathcal{E}}}
\DeclareMathOperator{\Ad}{Ad}
\DeclareMathOperator{\Ind}{Ind}
\DeclareMathOperator{\Res}{Res}
\DeclareMathOperator{\Trace}{Trace}
    \renewcommand{\H}{{\mathcal H }}
    \newcommand{\Bounded}{\mathfrak{B}}
    \newcommand{\Compact}{\mathfrak{K}}
\theoremstyle{plain}
\newtheorem{theorem}{Theorem}[section]
\newtheorem{lemma}[theorem]{Lemma}
\newtheorem{proposition}[theorem]{Proposition}
\newtheorem{corollary}[theorem]{Corollary}
\theoremstyle{definition}
\newtheorem{definition}[theorem]{Definition}
\newtheorem{example}[theorem]{Example}
\newtheorem{remark}[theorem]{Remark}
\numberwithin{equation}{section}
\title{\texorpdfstring{Parabolic induction and restriction via $C^*$-algebras and Hilbert $C^*$-modules}{Parabolic induction and restriction via C*-algebras and Hilbert C*-modules}}
\author{Pierre Clare \and Tyrone Crisp \thanks{Partially supported by the Danish National Research Foundation through the Centre for Symmetry and Deformation (DNRF92).} \and Nigel Higson \thanks{Partially supported by the US National Science Foundation DMS-1101382. }}
\begin{document}

\def\parsedate #1:20#2#3#4#5#6#7#8\empty{#4#5/#6#7/20#2#3}
\def\moddate#1{\expandafter\parsedate\pdffilemoddate{#1}\empty}
\date{}

\maketitle
 
\begin{abstract}
This paper is about the reduced group $C^*$-algebras of real reductive groups, and about Hilbert $C^*$-modules over these $C^*$-algebras.  We shall do three things.   First we shall apply theorems from the tempered representation theory of reductive groups to determine the structure of the reduced $C^*$-algebra (the result has been known for some time, but it is difficult to assemble a full treatment from the existing literature).   Second, we shall use the structure of the reduced $C^*$-algebra to determine the structure of the Hilbert $C^*$-bimodule   that represents  the functor  of parabolic induction.  Third, we shall prove that the  parabolic induction bimodule  admits a secondary inner product, using which we can define a functor  of parabolic restriction in tempered representation theory.   We shall prove in a sequel to this paper that   parabolic restriction is adjoint, on both the left and the right, to  parabolic induction in the context of tempered unitary Hilbert space representations.
\end{abstract} 

\section{Introduction}

The unitary dual $\widehat{G}$ of a locally compact group $G$ may be topologized through the uniform convergence   on compact sets of matrix coefficient functions.   The \emph{reduced dual}   is the closed subset of $\widehat G$ consisting of (equivalence classes of) irreducible unitary representations that are weakly contained in the regular representation   on $L^2(G)$.  The unitary dual identifies naturally, as a topological space,  with the spectrum of the group $C^*$-algebra $C^*(G)$, while the reduced dual identifies with the spectrum of the \emph{reduced group $C^*$-algebra} $C^*_r (G)$, which is the operator norm-closure of the $L^1$-convolution algebra of $G$ inside the algebra of bounded operators on $L^2 (G)$. For all this see    \cite{DixmierEnglish} or  \cite[Appendix F]{T}.

The purpose of this paper is to examine the structure of the reduced dual and the reduced group $C^*$-algebra in the case of a real reductive group, for which the irreducible representations in the reduced dual  are precisely Harish-Chandra's irreducible tempered representations; see for example \cite{MR946351}. We shall pay special attention to the  functor of parabolic induction, which is not surprising  given the dominant role that parabolic induction plays in constructing irreducible tempered representations.

Let $G$ be a real reductive group and let $P$ be a parabolic subgroup with Levi factor $L$ (the precise class of groups that we shall consider is described in Section~\ref{sec-red-groups}).  We shall approach parabolic induction through the $\left(C^*_r (G),C^*_r (L)\right)$-correspondence introduced in \cite{PArtmodules} (the notation used there was $\E(G/N)$; here we shall use $C^*_r (G/N)$).  The general properties of this correspondence, especially the fact that $C^*_r(G)$ acts on $C^*_r (G/N)$ by compact operators (in the Hilbert module sense), contribute in a very helpful way to the determination of the structure of $C^*_r(G)$.  In the reverse direction, once the structure of the reduced $\Cs$-algebra has been determined, it is not difficult to determine the   structure of the correspondence.

The problems of determining the unitary dual or the reduced dual as topological spaces, and of determining the structure of the associated $C^*$-algebras, have a long history.  To give just a sampling of interesting advances we offer the list \cite{MR0146681,MR0269778,MR0324429,MR0476913,MR724030,Valette85,NoteWassermann}.  The final paper in the list, a short announcement by Wassermann, gives in some sense  the final word on the subject in the case of the reduced $C^*$-algebra.  But it relies on a short announcement \cite{MR0460539} by Arthur on the structure of the Harish-Chandra Schwartz algebra of a real reductive group, and neither of Wassermann's nor Arthur's  announcements were followed by   detailed published accounts.   Because of this, and because in any case a direct approach through $C^*$-algebras, rather than Schwartz algebras, is a bit more economical (see for example the use of elementary $C^*$-algebra ideas in Lemmas~\ref{lem-easy-cstar1}, \ref{lem-two-ideals} and \ref{lem-many-ideals}), we felt it worthwhile to provide an account of the matter here.

We should emphasize the obvious, that the structure theorem for $C^*_r(G)$ relies very heavily on results from tempered representation theory, due mostly to Harish-Chandra and Langlands.  Our contribution is to indicate how the structure of $C^*_r (G)$ can be obtained as a relatively simple consequence of these results.

Actually our main interest is   parabolic induction, not   the structure of $C^*_r (G)$, and in our view the main contribution of this paper lies there. It is shown in \cite{PArtmodules}   that if $H_\tau$ is the Hilbert space of a tempered representation $\tau$ of $L$, then the Hilbert module tensor product 
\[
 C^*_r (G/N)\otimes _{C^*_r (L)} H_\tau ,
\]
which is a Hilbert space carrying a representation of $C^*_r (G)$, is the representation of $G$ parabolically induced from $\tau$.  In the final part of this paper we shall construct a functor of \emph{parabolic restriction}, from tempered representations of $G$ to tempered representations of $L$.

To do so we shall study the adjoint module $C^*_r (N\backslash G) = C^*_r (G/N)^*$.  As a vector space this is simply the complex conjugate of $C^*_r (G/N)$,  and we equip it with the structure of a $C^*_r(L)$-$C^*_r (G)$-bimodule using the formula 
\[
b\cdot \overline e \cdot a = \overline{a^*e b^*}.
\]
Our main observation is that $C^*_r (N \backslash G)$ carries a compatible $\left(C^*_r(L),C^*_r (G)\right)$-correspondence structure, namely a compatible, complete, $C^*_r (G)$-valued inner product, hence a secondary norm which is equivalent to the original one. We can then define parabolic restriction using the Hilbert module tensor product 
\[
 C^*_r (N \backslash G )\otimes _{C^*_r (G)} H_\pi ,
\]
which carries tempered representations of $G$ to tempered representations of $L$.  The details are given in Section~\ref{sec-adjoint-module}.   It is not difficult to calculate the functor, given all the structure theory in the preceding sections, and we shall give some examples in Section~\ref{sec-adjoint-module}.  But the  most important feature of the parabolic restriction functor is that it is both left and right adjoint to the functor of parabolic induction between categories of tempered unitary representations.  This we shall prove in a separate paper \cite{CCH_adjoint} (the proof is not difficult, but it involves a quite different set of ideas from operator space theory that would be a bit out of place in the present paper).  

Of course, the presence of an adjoint to parabolic induction on both sides calls to mind Bernstein's second adjoint theorem in the representation theory of $p$-adic reductive groups \cite[Chapter 3]{BernsteinPoly}.  We hope to return to the relationship between the second adjoint theorem and our bimodule elsewhere (for a few preliminary comments, see Remark~\ref{rem-bernstein} in this paper).  But at the moment, a full understanding of the parabolic restriction functor introduced here  is beyond our reach.  It would be especially interesting to obtain a geometric perspective  on parabolic restriction and the second adjoint theorem in the real case (compare \cite{BezKazh} for the $p$-adic case).

In the final section of the paper  we shall explain the relation between our parabolic restriction functor and the Plancherel formula.  Using Harish-Chandra's  wave packets we give a simple explicit formula for the $C^*_r(G)$-valued inner product on (a dense subspace of)  $C^*_r (N \backslash G )$.

\section{Compact Operators on Hilbert Modules}
\label{sec-hilb-mod}

Throughout the paper we shall use the language of Hilbert modules over $C^*$-algebras.  For   background information we refer the reader to \cite{Lance}.\footnote{Whereas the term \emph{Hilbert $C^*$-module} is used in \cite{Lance}, here we shall use the contracted form \emph{Hilbert module}.}  In this section we shall fix some   terminology and  notation, and describe some specialized ideas concerning group actions   that will  soon feature prominently. 

\subsection*{Compact Operators}

\begin{definition}
Let  $C$ be  a $C^*$-algebra, and let $\mathcal{H}_1$ and $\mathcal {H}_2$ be Hilbert $C$-modules. We shall denote by $\Bounded ( \H _1, \H _2)$ the space of bounded operators $T\colon \H _1\to \H _2$ that possess an adjoint $T^*\colon \H _2 \to \H _1$, characterized by the usual formula 
\[
\langle Tv_1 , v_2 \rangle = \langle v_1 , T^* v_2 \rangle
\]
for all $v_1\in \H _1$ and all $ v_2 \in \H  _2$.  We say that $T$ is \emph{adjointable} if it possesses an adjoint.
\end{definition}

\begin{definition}
\label{def-cpt-op}
An adjointable operator in $\Bounded (\H_1,\H_2)$ is \emph{compact} if it lies in the operator norm-closure of the linear  span of   the  \emph{elementary operators} 
\[
 v_2\otimes v_1 ^* \colon v \mapsto v_2 \langle v_1, v\rangle
 \]
 determined by vectors $v_1 \in \H _1$ and $v_2 \in \H _2$. 
We shall denote  by $\Compact ( \H _1 ,\mathcal  H_2)$ the closed subspace in $\mathcal{B}(\mathcal{H}_1 , \mathcal {H}_2)$  consisting of all compact operators. 
\end{definition}

 See \cite[Chapter 1]{Lance}.  The algebra $\Compact(\H)$  of all compact operators on a single Hilbert $C$-module $\H$ is in fact a $C^*$-algebra (so is the algebra of all bounded, adjointable operators, but this will play a lesser role).  
 
\subsection*{Group Actions}
Let $\H$ be a Hilbert module over a $C^*$-algebra $C$.  We shall need to consider   group actions on the $C^*$-algebra $\Compact (\H)$ that are constructed from the following sorts of automorphisms of $\H$:

\begin{definition}
\label{def-twisted-action}
 By a  \emph{twisted unitary automorphism}  of $\H$  we shall mean the following data:
\begin{enumerate}[\rm (a)]
\item an   automorphism  $c\mapsto \alpha (c)$  of the $C^*$-algebra $C$, and  
\item a   complex-linear automorphism 
$
v \mapsto U(v)
$
 of $\H $,   with the property  that 
\begin{enumerate}[\rm (i)]
\item  $ U(vc )  = U(v) \alpha (c)    $ for all   $v\in \H $, all $c\in C$, and 
\item $\langle U(v_1), U(v_2)\rangle = \alpha(\langle v_1,v_2\rangle)$ for all $v_1,v_2\in \H $.
\end{enumerate}
\end{enumerate} 
We shall  say that the automorphism $U\colon \H\to \H $  \emph{covers} the $C^*$-algebra automorphism $\alpha\colon C \to C$.
\end{definition}

\begin{example}
\label{ex-vector-bundle}
 Let $X$ be a locally compact  space, and let 
 \[
 w\colon X \longrightarrow  X\]
  be a homeomorphism.   Let  $E$ be an equivariant Hermitian vector bundle over $X$, and let 
  \[
  \tilde w \colon E \longrightarrow  E
  \]
   be a homeomorphism that covers $w$ and is fiberwise a unitary vector space isomorphism.   Let $C = C_0(X)$ and let  $\H $ be the Hilbert $C$-module of continuous sections of $E$ vanishing at infinity on $X$. Then the formulas
\[
\alpha_w(f)(x) = f(w^{-1}x)\quad \text{and}\quad U_{\tilde w}(v)(x) =  \tilde w(v(w^{-1}x)) \in E_x
\]
define a  twisted unitary automorphism of $\H$.  
\end{example}

 \begin{definition}
\label{def-inner-auto}
 Suppose that $u$ is a unitary element in $C$, or in the multiplier  algebra of $C$; see \cite[Section 3.12]{Pedersen} or \cite[Chapter 2]{Lance}.  The multiplier $C^*$-algebra  contains $C$ as a closed, two-sided ideal, and the right $C$-action on $\H$ extends uniquely to the multiplier algebra. Because of this, we can define a twisted unitary automorphism of $\H$ as follows:
 \begin{enumerate}[\rm (a)]
\item $\alpha(c) = ucu^*$ for all $c\in C$,  and 
\item $U(v) = vu^*$ for all $v\in \mathcal{H}$.
\end{enumerate}
We shall call $U$    an \emph{inner} twisted unitary automorphism of $\H$. 
\end{definition}

Every twisted unitary automorphism  $g=(\alpha, U)$ of a Hilbert  module $\H $  induces an ordinary $C^*$-algebra automorphism \[
\operatorname{Ad}_g\colon \Compact  (\H )\longrightarrow \Compact (\H),
\]
 since if an operator $T\colon \H\to \H$    is compact in the sense of Definition~\ref{def-cpt-op}, then so is the composition 
\begin{equation}
\label{eq-action-on-ops}
\operatorname{Ad}_g(T)  =  U\circ T\circ U^{-1} \colon \H \longrightarrow \H .
 \end{equation}
   In fact
\begin{equation}
\label{eq-action-on-ops2}
 g( v_2\otimes v_1 ^*) =  U(v_2) \otimes    U(v_1) ^* .
 \end{equation}

\begin{lemma}
If $g$ is an inner automorphism of $\H$, then the induced automorphism $\operatorname{Ad}_g$  of $\Compact (\H)$ is the identity automorphism. \qed
\end{lemma}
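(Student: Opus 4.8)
The plan is to compute $\operatorname{Ad}_g$ directly on elementary operators, using the explicit form of an inner twisted unitary automorphism from Definition~\ref{def-inner-auto}, and then to extend by continuity and linearity. So suppose $g = (\alpha, U)$ is inner, with $\alpha(c) = ucu^*$ and $U(v) = vu^*$ for a unitary $u$ in the multiplier algebra of $C$. By formula~\eqref{eq-action-on-ops2}, for any $v_1, v_2 \in \H$ we have $g(v_2 \otimes v_1^*) = U(v_2) \otimes U(v_1)^* = (v_2 u^*) \otimes (v_1 u^*)^*$, so it suffices to check that $(v_2 u^*) \otimes (v_1 u^*)^* = v_2 \otimes v_1^*$ as operators on $\H$.

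First I would evaluate the left-hand operator on an arbitrary vector $v \in \H$: by the defining formula for elementary operators,
\[
\bigl((v_2 u^*) \otimes (v_1 u^*)^*\bigr)(v) = (v_2 u^*)\,\langle v_1 u^*, v\rangle.
\]
Next I would use the compatibility of the inner product with the right multiplier action, namely $\langle v_1 u^*, v\rangle = u \langle v_1, v\rangle$ (this is the standard fact that $\langle wc, v\rangle = c^* \langle w, v\rangle$, applied with $c = u^*$, together with the unique extension of the $C$-action to the multiplier algebra noted in Definition~\ref{def-inner-auto}). Substituting, the expression becomes $v_2 u^* u \langle v_1, v\rangle = v_2 \langle v_1, v\rangle$, since $u^* u = 1$ in the multiplier algebra. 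This is exactly $(v_2 \otimes v_1^*)(v)$, so $g(v_2 \otimes v_1^*) = v_2 \otimes v_1^*$ for all elementary operators.

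Finally I would extend: $\operatorname{Ad}_g$ is by construction a $C^*$-algebra automorphism of $\Compact(\H)$ (as recorded just before the lemma), hence linear and norm-continuous, and it agrees with the identity on the linear span of elementary operators; since that span is dense in $\Compact(\H)$ by Definition~\ref{def-cpt-op}, we conclude $\operatorname{Ad}_g = \operatorname{id}$. There is no real obstacle here — the only point requiring a moment's care is invoking the correct form of the inner-product compatibility with the multiplier action so that the factors $u^*$ and $u$ cancel; everything else is a formal density argument.
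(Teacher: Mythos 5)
Your proof is correct, and since the paper states this lemma with a bare \qed (treating it as immediate), you are supplying the argument the authors left implicit rather than deviating from a different route. The computation on elementary operators using $\langle v_1 u^*, v\rangle = u\langle v_1, v\rangle$ and the cancellation $u^*u = 1$, followed by the standard density argument, is exactly the intended justification.
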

 
Assume now that a group $ W$ acts by   automorphisms\footnote{More generally we could consider   an action by \emph{outer} automorphisms, that is, an action modulo inner automorphisms.  But this extra generality will not be needed for the examples we shall consider in this paper.}  on  the $C^*$-algebra $C$.  Assume   that associated to each element $w\in W$ we are given  a twisted unitary automorphism
\[
U_{w} \colon \H \longrightarrow \H
\]
that covers $w$.  In addition, assume that for all $w,z\in W$ the composition 
\[
\H \stackrel{ U_{w}} \longrightarrow \H \stackrel {U_{z}} \longrightarrow \H
\]
is the composition  of   $U_{zw}$ with   an inner automorphism (on either the left or the right).  Necessarily\footnote{This would not be so if we were considering outer actions; see the previous footnote.} the inner automorphism covers the \emph{identity} automorphism of $C$; that is, it is constructed, as in Definition~\ref{def-inner-auto}, from a \emph{central} unitary $u$ in $C$  or in the multiplier algebra of $C$. 

\begin{definition}
\label{def-proj-act}
In this situation we shall say that the group $W$ acts \emph{projectively} on the Hilbert module $\H$.
\end{definition}

Such a projective action  of $  W$ on $\H$  induces an ordinary action  of $W$ by $C^*$-algebra automorphisms on $\Compact(\mathcal{H})$, and we shall be studying throughout the paper the associated fixed-point algebra:

\begin{definition}
We shall denote by $\Compact  (\H )^W$ the fixed-point $C^*$-subalgebra of $\Compact (\H)$ under the above action.
\end{definition}

\begin{remark}
Given any projective action  we can form the  group extension
\begin{equation}
\label{eq-proj-extension}
1\longrightarrow \operatorname{Inn}(\H) \longrightarrow \widetilde W \longrightarrow W \longrightarrow 1 ,
\end{equation}
in which $ \operatorname{Inn}(\H)$ is the  group of inner automorphisms of $\H$ associated to central unitary elements of $C$ or its multiplier algebra, while  
$
\widetilde W 
$
is the Cartesian product $\operatorname{Inn}(\H) \times W$ as a set, but with group structure
\[
(U_1,w_1) (U_2,w_2) = ( U_{12}, w_1w_2)
\]
where
\[
U_1 U_{w_1}U_2 U_{w_2} = U_{12} U_{w_1w_2} \colon \H \longrightarrow \H
.
\]
There is then an actual (rather than projective) action of  $\widetilde W$   on $\H$ by twisted unitary automorphisms.  The fixed point algebras associated to $\widetilde W$ and $W$ are the same.
 \end{remark}

  \subsection*{Hilbert Correspondences and Tensor Products}
  

\begin{definition}\label{def-A-B-bimodule}
Let $B$ and $C$ be $C^*$-algebras and let $\H$ be a Hilbert $C$-module. We shall call $\H$ a \emph{correspondence from $B$ to $C$}, or a \emph{$(B,C)$-correspondence} if it is equipped with an \emph{action  homomorphism} of $C^*$-algebras
\begin{equation}
\label{eq-action-map}
B  \longrightarrow \Bounded  (\H ).
\end{equation}
\end{definition}

\begin{definition}
\label{def-interior-tp}
Let $\mathcal E$ be a Hilbert  $B$-module and let $\H$ be a  {correspondence from $B$ to $C$}. The \emph{interior tensor product}  $
 \E\otimes _B \mathcal {H}$ is constructed from the algebraic tensor product $\E\otimes^{\text{alg}}_B \H$ by completion in the norm associated to the $C$-valued inner product
 \begin{equation*}
 \bigl \langle   e_1 \otimes v_1 , e_2 \otimes v_2 \bigr \rangle_C  =
\bigl  \langle v_1, \langle e_1,e_2\rangle_B    v_2\bigr \rangle_C  .
\end{equation*}
On the right hand side, the element $\langle e_1,e_2\rangle_B\in B$ is regarded as an operator on $\mathcal{H}$ via the action homomorphism (\ref{eq-action-map}).   
\end{definition}

See \cite[Proposition 4.5]{Lance} for details.  If $\E$ is a Hilbert correspondence from $A$ to $B$, then the interior tensor product is a correspondence from $A$ to $C$.
 
 \begin{lemma}
\label{lem-cpt-tensor-one}
If $\E$ is a Hilbert $B$-module,   if $B$ acts on a Hilbert $C$-module $\H$ through compact operators, and if $S$ is a compact operator on $\E$, then $S\otimes I$ is a compact operator on $\E\otimes _B \H$.  \end{lemma}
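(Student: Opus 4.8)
The plan is to reduce, by linearity and continuity, to the case where $S$ is an elementary operator $e_2 \otimes e_1^*$ on $\mathcal{E}$, and then to compute explicitly what $S \otimes I$ does on elementary tensors. First I would observe that the map $S \mapsto S \otimes I$ is linear and norm-decreasing (indeed $\|S \otimes I\| \le \|S\|$, since the interior tensor product norm is built from the $C$-valued inner product in the manner of Definition \ref{def-interior-tp}), hence continuous from $\Compact(\mathcal{E})$ to $\Bounded(\mathcal{E} \otimes_B \mathcal{H})$. Since $\Compact(\mathcal{E}\otimes_B\mathcal{H})$ is closed, it therefore suffices to treat a dense set of $S$, and by Definition \ref{def-cpt-op} it suffices to take $S = e_2 \otimes e_1^*$ with $e_1, e_2 \in \mathcal{E}$.

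For such an $S$, one computes $(S \otimes I)(e \otimes v) = (S e) \otimes v = \big(e_2 \langle e_1, e\rangle_B\big) \otimes v = e_2 \otimes \big(\langle e_1, e\rangle_B \cdot v\big)$, where in the last step we move the element $\langle e_1, e\rangle_B \in B$ across the tensor product, using that it acts on $\mathcal{H}$ through the action homomorphism (\ref{eq-action-map}). The key point is now that, because $B$ acts on $\mathcal{H}$ through \emph{compact} operators, the element $b = \langle e_1, e\rangle_B$ acts on $\mathcal{H}$ as a compact operator $T_b \in \Compact(\mathcal{H})$. This suggests that $S \otimes I$ should be expressible as a (limit of) elementary operators on $\mathcal{E}\otimes_B\mathcal{H}$ of the form $(e_2 \otimes w_2) \otimes (e_1 \otimes w_1)^*$. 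Making this precise is the heart of the argument: I would factor $b$ (or rather approximate $T_b$ in norm by finite sums of elementary operators $w_2 \otimes w_1^*$ on $\mathcal{H}$) and check that $e_2 \otimes (w_2 \otimes w_1^*)(\langle e_1, -\rangle_B\, v)$ assembles, over varying inputs $e \otimes v$, into the elementary operator $(e_2 \otimes w_2)\otimes (e_1 \otimes w_1)^*$ on $\mathcal{E}\otimes_B\mathcal{H}$; here one uses the defining inner product formula from Definition \ref{def-interior-tp} to match $\langle e_1 \otimes w_1, e \otimes v\rangle_C$ with $\langle w_1, \langle e_1, e\rangle_B v\rangle_C$.

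The main obstacle, and the place requiring genuine care rather than routine bookkeeping, is the approximation step: the correspondence $b \mapsto T_b$ from $B$ to $\Compact(\mathcal{H})$ need not be surjective, so one cannot literally write $T_b$ as a finite sum of rank-one operators coming from $\mathcal{H}$; one must argue that $T_b$ lies in the norm-closure of such sums (which it does, being compact by hypothesis) and then control how the error propagates through the map $w_2 \otimes w_1^* \mapsto (e_2 \otimes w_2)\otimes(e_1\otimes w_1)^*$ and back into $\Bounded(\mathcal{E}\otimes_B\mathcal{H})$ — again this is a norm-decreasing operation, so the estimate closes. A cleaner packaging, which I would likely adopt in the write-up, is to note that it suffices to prove the statement when $S = e_2 \otimes e_1^*$ \emph{and} additionally $b = \langle e_1, e_2 \rangle_B$ has been replaced by a single elementary operator on $\mathcal{H}$; one then checks the clean identity $(e_2 \otimes e_1^*)\otimes I$ evaluated against the $B$-compact action decomposes exactly into elementary operators, and invokes density twice (once in $\Compact(\mathcal{E})$, once in $\Compact(\mathcal{H})$), with continuity of $S \mapsto S \otimes I$ doing all the limiting work. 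I expect no serious difficulty beyond organizing these two density arguments compatibly.
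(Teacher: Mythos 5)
The paper's own proof of this lemma is a one-line citation to Lance \cite[Proposition 4.7]{Lance}, so it gives no detail; you are trying to reconstruct the argument, which is a reasonable thing to do. Your overall plan — reduce by linearity and continuity (norm-contractivity of $S \mapsto S \otimes I$) to $S = e_2 \otimes e_1^*$, and exploit that the $B$-action on $\H$ is compact — is the right starting point. But the central approximation step, as you have sketched it, does not close, and you have not quite identified the correct fixed element of $B$ to replace by elementary operators.

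Concretely: you compute $(S\otimes I)(e\otimes v) = e_2 \otimes \langle e_1, e\rangle_B v$ and then speak of approximating ``$T_b$'' by elementary operators $\sum w_2 \otimes w_1^*$ on $\H$. If you mean $b = \langle e_1, e\rangle_B$, this is not a fixed element: it varies with the input $e$, so there is no single operator to approximate. In your ``cleaner packaging'' you instead write $b = \langle e_1, e_2\rangle_B$, but that element simply does not appear in the formula for $(S\otimes I)(e\otimes v)$, so substituting an approximation for it has nothing to act on. If, finally, the intent is to choose $K = \sum w_2 \otimes w_1^*$ so that $e_2 \otimes K\bigl(\langle e_1, e\rangle_B v\bigr)$ approximates $e_2 \otimes \langle e_1, e\rangle_B v$ uniformly, you would need $K$ close in norm to the identity of $\Bounded(\H)$, which is impossible in general since the identity is not compact. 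So the argument as written has a genuine gap.

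Two standard ways to repair it. The first stays close to your plan: use the factorization $e_2 = e_2'\,a$ with $a = \langle e_2, e_2\rangle_B^{1/4} \in B$ (available in any Hilbert $B$-module, e.g.\ as the norm limit of $e_2\bigl(\langle e_2,e_2\rangle_B^{1/2} + 1/n\bigr)^{-1}\langle e_2,e_2\rangle_B^{1/4}$). Writing $T_x\colon v \mapsto x\otimes v$ for the map of \textup{(\ref{eq-T-e})}, one has $(e_2\otimes e_1^*)\otimes I = T_{e_2}T_{e_1}^* = T_{e_2'}\,a\,T_{e_1}^*$, where $a$ now acts on $\H$ as a \emph{fixed} compact operator. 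Approximating $a$ in $\Compact(\H)$ by $K = \sum_i w_2^{(i)}\otimes w_1^{(i)*}$ then gives
\[
T_{e_2'} K T_{e_1}^* \;=\; \sum_i \bigl(e_2'\otimes w_2^{(i)}\bigr)\otimes\bigl(e_1\otimes w_1^{(i)}\bigr)^*,
\]
with $\bigl\|T_{e_2'}(a-K)T_{e_1}^*\bigr\| \le \|e_2'\|\,\|a-K\|\,\|e_1\|$, which is exactly the uniform estimate your sketch was missing. The second, slicker route — the one the paper itself uses two lemmas later — avoids elementary operators on the tensor product entirely: note $T_{e_2}^*T_{e_2}$ is multiplication by $\langle e_2,e_2\rangle_B$, hence compact on $\H$; deduce that $T_{e_2}$ is itself compact (using that $T^*T\in\Compact$ forces $T\in\Compact$, via $T = \lim_n T\,f_n(T^*T)$ with $f_n(t)=t/(t+1/n)$); and conclude $(e_2\otimes e_1^*)\otimes I = T_{e_2}T_{e_1}^*$ is compact because the compact operators form an ideal. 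Either repair turns your sketch into a complete proof; as it stands, the identification of the correct fixed compact operator to approximate is the missing idea.
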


\begin{proof}
This follows directly from \cite[Proposition 4.7]{Lance}.
\end{proof}

For the rest of this section we shall be concerned with    the situation  in which a finite group $W$ acts projectively on a Hilbert $C$-module $\H $ by twisted unitary automorphisms, as in the previous section. We shall take 
\begin{equation}
\label{eq-def-of-B}
B = \Compact (\H)^W,
\end{equation}
which acts on $\H$ in the obvious way and gives $\H$ the structure of a correspondence from $B$ to $C$.

We shall examine the structure of the interior tensor product $\E \otimes _B \H$ in this case, and the structure of the $C^*$-algebra $\Compact (\E\otimes _B \H)$ of compact operators on the tensor product. 

\begin{lemma}
\label{lem-W-action-def}
If $\H$ and $B$ are as above, then the  formula  
\[
U_w (e\otimes v)  = e \otimes U_w(v)
\]
 defines a projective action of $W$ by twisted unitary automorphisms on the interior tensor product $\E\otimes _B \H$.
\end{lemma}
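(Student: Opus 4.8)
The plan is to carry the twisted unitary automorphisms $U_w$ of $\H$ through the construction of the interior tensor product, acting only on the second tensor factor. The one structural fact that makes this work is a consequence of the definition of the coefficient algebra $B = \Compact(\H)^W$: for every $w \in W$ and every $b \in B$ the operators $U_w$ and $b$ commute on $\H$. Indeed, $b$ lies in the fixed-point algebra precisely when $\Ad_{U_w}(b) = b$ for all $w$, that is, $U_w b U_w^{-1} = b$ as operators on $\H$, equivalently $U_w(bv) = b\,U_w(v)$ for all $v \in \H$.

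First I would check that $e\otimes v\mapsto e\otimes U_w(v)$ is well defined on the algebraic tensor product $\E\otimes^{\text{alg}}_B\H$. Since $U_w$ is complex-linear it defines a map on $\E\otimes_{\mathbb{C}}\H$, and by the commutation relation above it sends each generating relation $eb\otimes v - e\otimes bv$ (with $b\in B$) to $eb\otimes U_w(v) - e\otimes b\,U_w(v)$, a relation of the same form; hence the map descends to $\E\otimes^{\text{alg}}_B\H$. The same argument applied to $U_w^{-1}$, which is again a twisted unitary automorphism of $\H$, now covering $w^{-1}$, and which therefore also commutes with $B$, shows that the descended map is a complex-linear bijection of $\E\otimes^{\text{alg}}_B\H$.

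Next I would verify compatibility with the $C$-module structure and the $C$-valued inner product of Definition~\ref{def-interior-tp}, which reduces to elementary tensors. Property (i) of $U_w$ on $\H$ gives $U_w\bigl((e\otimes v)c\bigr) = e\otimes U_w(vc) = e\otimes U_w(v)w(c) = \bigl(U_w(e\otimes v)\bigr)w(c)$. For the inner product, using the commutation of $U_w$ with $\langle e_1,e_2\rangle_B\in B$ and then property (ii) of $U_w$ on $\H$,
\[
\bigl\langle U_w(e_1\otimes v_1),\,U_w(e_2\otimes v_2)\bigr\rangle_C
= \bigl\langle U_w(v_1),\,U_w\bigl(\langle e_1,e_2\rangle_B\,v_2\bigr)\bigr\rangle_C
= w\bigl(\langle v_1,\,\langle e_1,e_2\rangle_B\,v_2\rangle_C\bigr),
\]
which is $w$ applied to $\langle e_1\otimes v_1,\,e_2\otimes v_2\rangle_C$. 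Since $w$ is a $C^*$-automorphism it is isometric, so $U_w$ is isometric for the tensor-product norm; its inverse is isometric for the same reason, so $U_w$ extends to a complex-linear automorphism of the completion $\E\otimes_B\H$, and the identities in (i)--(ii) extend by continuity. Thus $U_w$ is a twisted unitary automorphism of $\E\otimes_B\H$ covering the automorphism $w$ of $C$.

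Finally I would check projectivity. By hypothesis, for $w,z\in W$ we have $U_z\circ U_w = V\circ U_{zw}$ on $\H$, where $V$ is the inner automorphism $v\mapsto vu^*$ associated, as explained in the text, to a central unitary $u$ in $C$ or its multiplier algebra (and similarly if the inner factor occurs on the right). On elementary tensors this yields $U_z\bigl(U_w(e\otimes v)\bigr) = e\otimes V\bigl(U_{zw}(v)\bigr) = \bigl(e\otimes U_{zw}(v)\bigr)u^* = \bigl(U_{zw}(e\otimes v)\bigr)u^*$, hence $U_z\circ U_w = V'\circ U_{zw}$ on $\E\otimes_B\H$, where $V'$ is the inner automorphism $\xi\mapsto\xi u^*$ of $\E\otimes_B\H$ attached to the same central unitary $u$. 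So the family $(U_w)_{w\in W}$ is a projective action on $\E\otimes_B\H$ in the sense of Definition~\ref{def-proj-act}. I expect no serious obstacle: the only essential point is the commutation of $U_w$ with $B$, which is what forces the naive formula $e\otimes v\mapsto e\otimes U_w(v)$ to be well defined; everything after that is a routine verification on elementary tensors together with a density argument.
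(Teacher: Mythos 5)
Your proof is correct and follows the same route as the paper's, which simply observes that isometry and the compatibility conditions of Definition~\ref{def-twisted-action} can be checked on the algebraic tensor product and then extended by continuity. The detail you supply about well-definedness---that $U_w$ commutes with every element of $B=\Compact(\H)^W$ precisely because $B$ is the fixed-point algebra---is the key point the paper leaves implicit.
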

 
 \begin{proof}
The action on the algebraic tensor product is isometric in the interior tensor product norm, and so extends to the completion $\E\otimes _B \H$.  The compatibility conditions in Definition~\ref{def-twisted-action} 
can be checked on the algebraic tensor product, and then they extend by continuity to the completion.
 \end{proof}
  
Now let  $e\in \E$. The formula
 \begin{equation}
 \label{eq-T-e}
T_e \colon v\mapsto e \otimes v
 \end{equation}
defines an operator $T_e \colon \H \to \E\otimes _B \H$ with adjoint 
 \begin{equation}
 \label{eq-T-e-star}
T_e^* \colon f\otimes v \mapsto \langle e,f\rangle v.
 \end{equation}
 
 \begin{lemma}
Each of the operators $T_e$ in \textup{(\ref{eq-T-e})} is compact and $W$-equivariant. 
 \end{lemma}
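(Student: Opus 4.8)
The plan is to prove the two assertions about $T_e$ separately, starting with compactness. For compactness, the key observation is that $T_e$ can be expressed in terms of elementary operators once we bring in the $W$-averaging. Recall that $B = \Compact(\H)^W$ acts on $\H$ through compact operators (as a subalgebra of $\Compact(\H)$), so by Lemma~\ref{lem-cpt-tensor-one}, any compact operator on $\E$ tensored with the identity gives a compact operator on $\E \otimes_B \H$. The natural candidate to exploit is the elementary operator $e \otimes e^* \in \Compact(\E)$; I would compute $(e \otimes e^*) \otimes I$ acting on $\E \otimes_B \H$ and relate it to $T_e$. More directly, I expect the cleanest route is: for $e, e' \in \E$, the elementary operator $(e' \otimes e^*) \otimes I$ on $\E \otimes_B \H$ sends $f \otimes v \mapsto e' \otimes \langle e, f\rangle_B v = e' \otimes (\langle e,f\rangle_B v)$, and one recognizes $T_e^* \colon \E\otimes_B\H \to \H$ followed by nothing quite matches — so instead I would argue that $T_e = T_e \circ I_\H$ and approximate $I_\H$ using an approximate unit for $B$ drawn from finite sums of elementary operators, writing $T_e$ as a limit of operators of the form $f \mapsto e \otimes (\sum_i b_i v)$ which factor through compact operators on $\E$.

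Concretely, the argument I favor runs as follows. Pick an approximate unit $(b_\lambda)$ for the $C^*$-algebra $B = \Compact(\H)^W$. Then for $v \in \H$ we have $b_\lambda v \to v$, and hence on elementary tensors $e \otimes v = e \otimes \lim_\lambda b_\lambda v = \lim_\lambda (e \otimes b_\lambda v)$. Each operator $v \mapsto e \otimes b_\lambda v$ is a composition: apply $b_\lambda \in \Compact(\H)$ to $v$, then apply $T_e$. But in fact what makes this compact is that $e \otimes b_\lambda v = (\theta_{e,e'} \otimes I)(e' \otimes v')$-type expressions when $b_\lambda$ is itself a finite sum of elementary operators $v_2 \otimes v_1^*$ lying in $\Compact(\H)$; since such $b_\lambda$ need not be $W$-invariant, I would instead use that $B$ contains the $W$-averages of elementary operators and that these $W$-averages still span a dense subset, so one may take $b_\lambda$ to be finite sums of $W$-averaged elementary operators. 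Then $e \otimes (\tfrac{1}{|W|}\sum_{w} U_w(v_2)\otimes U_w(v_1)^*) v = \tfrac{1}{|W|}\sum_w (e \otimes U_w(v_2)) \langle U_w(v_1), v\rangle$, which is visibly a finite sum of elementary operators $(e\otimes U_w(v_2)) \otimes U_w(v_1)^* \in \Compact(\H, \E\otimes_B\H)$. Taking the limit shows $T_e$ is compact. The main obstacle here is precisely this point: the naive elementary operators generating $\Compact(\H)$ are not $W$-invariant, so one must be a little careful to produce an approximate unit for $B$ that is manifestly built from elementary operators on $\H$ — I would cite \cite[Chapter 1]{Lance} for the existence of such approximate units, or note that $B$, being a $C^*$-algebra acting nondegenerately on $\H$, has an approximate unit of the required form since $W$-averages of elementary operators lie in $B$ and span a dense $*$-subalgebra.

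For $W$-equivariance, the verification is a direct computation with the definitions. The projective action on $\E \otimes_B \H$ from Lemma~\ref{lem-W-action-def} is $U_w(f \otimes v) = f \otimes U_w(v)$, while the projective action on $\H$ is the given $U_w$. I would simply check that for all $v \in \H$,
\[
U_w(T_e v) = U_w(e \otimes v) = e \otimes U_w(v) = T_e(U_w v),
\]
so $U_w \circ T_e = T_e \circ U_w$ as operators $\H \to \E \otimes_B \H$; equivalently $T_e$ intertwines the two projective $W$-actions, i.e. $T_e$ is $W$-equivariant in the appropriate sense (it sends the $W$-action on $\H$ to the $W$-action on $\E\otimes_B\H$, commuting with $U_w$ on the nose). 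Since this holds for the defining automorphisms $U_w$ and the equivariance of the induced $\Ad$-actions follows formally from \eqref{eq-action-on-ops}, there is nothing further to do. This part presents no obstacle; the only substantive work is the compactness argument, and within that, isolating the right form of approximate unit.
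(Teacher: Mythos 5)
Your $W$-equivariance verification is exactly the paper's (it is an immediate consequence of the definition of the action in Lemma~\ref{lem-W-action-def}), so nothing to add there.

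The compactness argument, however, has a genuine gap. You pass from $b_\lambda v \to v$ (which holds pointwise, by nondegeneracy of the $B$-action) to the conclusion that $T_e$ is compact because the operators $T_e b_\lambda$ are. But $\Compact(\H,\E\otimes_B\H)$ is by definition the \emph{norm} closure of the elementary operators, and a strong (pointwise) limit of compact operators is not generally compact --- think of finite-rank projections converging strongly to the identity on an infinite-dimensional Hilbert space. What you actually need is $\|T_e - T_e b_\lambda\| \to 0$, and this is precisely where the missing ingredient enters: from \eqref{eq-T-e-star} one computes $T_e^*T_e = \langle e,e\rangle \in B \subseteq \Compact(\H)$, whence
\[
\|T_e - T_e b_\lambda\|^2 = \bigl\| (1-b_\lambda)\,T_e^*T_e\,(1-b_\lambda) \bigr\| = \bigl\| (1-b_\lambda)\,\langle e,e\rangle\,(1-b_\lambda) \bigr\| \longrightarrow 0
\]
since $(b_\lambda)$ is an approximate unit for $B$ and $\langle e,e\rangle\in B$. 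At that point, though, the approximate-unit machinery is an unnecessary detour: once you know $T_e^*T_e=\langle e,e\rangle$ is compact, the standard $C^*$-fact that $T^*T$ compact implies $T$ compact (proved by exactly the inequality above, taking $(b_\lambda)$ an approximate unit for $\Compact(\H)$) finishes the job in one line. This is the paper's proof. So your route can be made rigorous, but the step you glossed over with ``taking the limit'' is the whole content, and it reduces to the same computation the paper uses directly; there is also no need to agonize over constructing an approximate unit for $B$ out of $W$-averaged elementary operators, since any approximate unit will do once $T_e^*T_e\in B$ is in hand.
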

 
 \begin{proof}
 The $W$-equivariance of $T_e$ is clear from the definition of the action in Lemma~\ref{lem-W-action-def}.
Assuming, as we are in (\ref{eq-def-of-B}), that   $B$ acts on $\H$ through compact operators,   the composition 
\[
T^*_e T_e \colon v \mapsto \langle e, e\rangle v 
\]
is evidently compact. Therefore $T_e$ is compact, too. \end{proof}

Now consider the map
\begin{equation}
\label{eq-module-action-map}
\mathcal E \longrightarrow  \Compact  (\H , \E\otimes _B \H )^W
\end{equation}
 that sends $e\in \E$ to the compact operator $T_e$.  Consider the target space  as a Hilbert $B$-module under the inner product
 \[
 \langle S , T\rangle = S^* T\in \Compact (\H)^W.
 \]

\begin{proposition}
\label{prop-module-structure}
The  map
\textup{(\ref{eq-module-action-map})}
is an isometric isomorphism of Hilbert $B$-modules.
\end{proposition}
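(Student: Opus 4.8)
The plan is to check, in order, that the map \eqref{eq-module-action-map} is (a) a homomorphism of right $B$-modules, (b) inner-product preserving — whence isometric, injective, and of closed range — and (c) surjective. Parts (a) and (b) are short formal computations with the definitions already recorded; part (c) carries the content.

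For (a), recall that in the interior tensor product over $B$ one has $eb\otimes v = e\otimes bv$ for $e\in\E$, $b\in B$, $v\in\H$. Hence $T_{eb}(v)=(eb)\otimes v = e\otimes bv = T_e(bv)$, so $T_{eb}=T_e\cdot b$, where $T_e\cdot b$ denotes the composition of $T_e$ with the operator $b\in B\subseteq\Compact(\H)$ acting on $\H$; this composition is exactly the right $B$-action on the target module. For (b), formulas \eqref{eq-T-e} and \eqref{eq-T-e-star} give $T_e^*T_f\colon v\mapsto\langle e,f\rangle\,v$, and since $B=\Compact(\H)^W$ acts on $\H$ through its tautological inclusion into $\Compact(\H)$, this operator \emph{is} the element $\langle e,f\rangle\in B$. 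Thus $\langle T_e,T_f\rangle = T_e^*T_f = \langle e,f\rangle$, so \eqref{eq-module-action-map} preserves the $B$-valued inner product; in particular it is isometric, hence injective with closed range, and it only remains to prove surjectivity.

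For (c), first fix a convenient spanning set. Since $\E\otimes_B\H$ is the closed linear span of the vectors $f\otimes w$ ($f\in\E$, $w\in\H$), and $\xi\mapsto\xi\otimes v^*$ is linear and continuous in $\xi$, the space $\Compact(\H,\E\otimes_B\H)$ is the closed linear span of the elementary operators $(f\otimes w)\otimes v^*$. Now $(f\otimes w)\otimes v^*\colon u\mapsto(f\otimes w)\langle v,u\rangle_C = f\otimes\bigl(w\langle v,u\rangle_C\bigr) = T_f\bigl((w\otimes v^*)u\bigr)$, so $\Compact(\H,\E\otimes_B\H)$ is the closed linear span of the operators $T_f\circ K$ with $f\in\E$ and $K\in\Compact(\H)$. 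Let $T\in\Compact(\H,\E\otimes_B\H)^W$ and write $T=\lim_n\sum_i T_{f_i^{(n)}}\circ K_i^{(n)}$ with finite sums. Each $T_f$ is $W$-equivariant, i.e.\ $U_w T_f U_w^{-1}=T_f$, so $U_w(T_f\circ K)U_w^{-1}=T_f\circ\Ad_w(K)$, where $\Ad_w$ is the induced automorphism of $\Compact(\H)$. Because $T$ is $W$-fixed and $W$ is finite, averaging and using continuity of conjugation by $U_w$ gives
\[
T \;=\; \frac{1}{|W|}\sum_{w\in W}U_w T U_w^{-1} \;=\; \lim_n\sum_i T_{f_i^{(n)}}\circ E\bigl(K_i^{(n)}\bigr),
\]
where $E(K)=|W|^{-1}\sum_{w\in W}\Ad_w(K)$ is the canonical conditional expectation of $\Compact(\H)$ onto $B=\Compact(\H)^W$. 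By (a) each term $T_f\circ E(K)$ equals $T_{f\cdot E(K)}$ and so lies in the range of \eqref{eq-module-action-map}; since that range is closed, $T$ lies in it too, which proves surjectivity and completes the argument.

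I expect the only real obstacle to be the density/averaging step in (c): one must present a general compact operator $\H\to\E\otimes_B\H$ through the spanning operators $T_f\circ K$ \emph{so that} the $W$-average collapses into the range of the map. This works precisely because the $T_f$ are already $W$-equivariant (so conjugation by $U_w$ passes through $T_f$ and only acts on the $\Compact(\H)$-factor) and because $W$ is finite, so that $E$ exists and takes values in $B$. The remaining points are routine manipulations with the tensor-product and inner-product formulas quoted above.
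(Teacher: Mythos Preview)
Your argument is correct. Parts (a) and (b) are exactly the verifications one would expect, and your surjectivity argument in (c) is clean: the key point is the identity $(f\otimes w)\otimes v^* = T_f\circ(w\otimes v^*)$, from which you get $\Compact(\H,\E\otimes_B\H)=\overline{\operatorname{span}}\{T_f\circ K\}$, and then the $W$-equivariance of each $T_f$ lets you push the averaging entirely onto the $\Compact(\H)$-factor, landing you in $T_f\circ B$ and hence in the range.

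The paper's proof rests on the same elementary-operator identity but packages it differently: it rewrites the map via $\E\cong\E\otimes_B B=\E\otimes_B\Compact(\H)^W$, so that on elementary tensors it reads $e\otimes(v_1\otimes v_2^*)\mapsto(e\otimes v_1)\otimes v_2^*$, and then asserts this preserves inner products and has dense range. That presentation is more compressed; in particular the ``dense range'' step is stated rather than argued, and unpacking it would lead to essentially your averaging computation (since a general $v_1\otimes v_2^*$ is not $W$-invariant, one must explain why the $W$-fixed operators are nonetheless reached). Your route makes that step explicit and transparent, at the cost of a little more bookkeeping; the paper's route is shorter to write down but leaves more to the reader.
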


\begin{proof}
Consider the diagram of  isometric  isomorphisms of Hilbert $B$-modules
\[
\E \stackrel \cong \longleftarrow \E \otimes _B B \stackrel\cong \longrightarrow \E\otimes _B  \Compact (\mathcal{H})^W ,
\]
in which the left-hand map is multiplication and the right-hand map simply recalls the definition of $B$ in (\ref{eq-def-of-B}).  Using the isomorphisms, we can think of (\ref{eq-module-action-map}) as a map 
\[
 \E\otimes _B  \Compact (\mathcal{H})^W  \longrightarrow  \Compact  (\H , \E\otimes _B \H )^W .
\]
On elementary tensors the map has the form
\[
e\otimes (v_1 \otimes v_2^*) \mapsto (e\otimes v_1) \otimes v_2 ^*,
\]
It preserves inner products and has dense range, so it is an isometric isomorphism.
\end{proof}

We turn now to a description of $\Compact  (\E)$.  The formula   $S\mapsto S \otimes I$ defines a  homomorphism of $C^*$-algebras
\begin{equation}
\label{eq-cpt-iso1}
\Compact  (\E) \longrightarrow \Bounded (\E\otimes _B \H ) .
\end{equation}
See \cite[p.42]{Lance}.

\begin{lemma}
The operators $S\otimes I $ are compact and $W$-invariant.
\end{lemma}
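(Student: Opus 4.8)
The plan is to handle the two assertions separately; both follow immediately from facts already established, so this will be a short verification rather than a real argument.

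For compactness I would invoke Lemma~\ref{lem-cpt-tensor-one} directly. Since $B=\Compact(\H)^W$ is by construction a $C^*$-subalgebra of $\Compact(\H)$, the tautological action of $B$ on $\H$ is through compact operators, so Lemma~\ref{lem-cpt-tensor-one} applies with these $\E$, $B$ and $\H$ and yields $S\otimes I\in\Compact(\E\otimes_B\H)$ for every $S\in\Compact(\E)$. Adjointability of $S\otimes I$, with $(S\otimes I)^*=S^*\otimes I$, is part of the cited reference \cite[p.~42]{Lance} already used for \textup{(\ref{eq-cpt-iso1})}.

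For $W$-invariance I would compute with the projective action of Lemma~\ref{lem-W-action-def}, namely $U_w(e\otimes v)=e\otimes U_w(v)$. On an elementary tensor,
\[
U_w\bigl((S\otimes I)(e\otimes v)\bigr)=U_w(Se\otimes v)=Se\otimes U_w(v)=(S\otimes I)\bigl(U_w(e\otimes v)\bigr),
\]
so $U_w$ commutes with $S\otimes I$ on the algebraic tensor product, and hence, by continuity, on all of $\E\otimes_B\H$. Therefore $\operatorname{Ad}_{U_w}(S\otimes I)=U_w\circ(S\otimes I)\circ U_w^{-1}=S\otimes I$ for every $w\in W$. Since the projective $W$-action on $\E\otimes_B\H$ induces an honest action of $W$ on $\Compact(\E\otimes_B\H)$, this is exactly the asserted $W$-invariance, and combined with the previous paragraph it shows $S\otimes I\in\Compact(\E\otimes_B\H)^W$.

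I do not anticipate a genuine obstacle. The only point needing a word of care is that $U_w$ is well defined on $\E\otimes_B\H$ in the first place — this uses that each $b\in B=\Compact(\H)^W$ commutes with $U_w$ on $\H$ — but that is precisely what was checked in the proof of Lemma~\ref{lem-W-action-def}, so no new input is required here.
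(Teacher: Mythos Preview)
Your proposal is correct and matches the paper's own proof, which simply says that $W$-invariance is clear and that compactness follows from Lemma~\ref{lem-cpt-tensor-one}. You have just spelled out the ``clear'' part of the $W$-invariance explicitly.
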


\begin{proof}
The $W$-invariance is clear. Compactness is a consequence of Lemma~\ref{lem-cpt-tensor-one}. \end{proof}

\begin{proposition}
 \label{prop-cpt-iso2}
The map $S\mapsto S\otimes I$ determines  an isomorphism of $C^*$-algebras
 \begin{equation*}
\Compact  (\E)\stackrel \cong  \longrightarrow \Compact  (\E\otimes _B \H )^W .
\end{equation*}
\end{proposition}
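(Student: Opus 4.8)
The plan is to show that the $C^*$-algebra homomorphism $S \mapsto S \otimes I$ from $\Compact(\E)$ to $\Compact(\E \otimes_B \H)^W$ is injective and surjective, and then appeal to the standard fact that an injective $*$-homomorphism of $C^*$-algebras is isometric, which upgrades a bijective $*$-homomorphism to an isometric isomorphism.

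For \emph{surjectivity}, I would argue that the range of $S \mapsto S \otimes I$ is dense and closed. The natural spanning set for $\Compact(\E \otimes_B \H)^W$ consists of averaged elementary operators, but a cleaner route is to factor through the operators $T_e$ of (\ref{eq-T-e}). Observe that for $e, f \in \E$ and the rank-one operator $f \otimes e^* \in \Compact(\E)$, one has $(f \otimes e^*) \otimes I = T_f T_e^*$ acting on $\E \otimes_B \H$. Now given any compact $W$-invariant operator $K$ on $\E \otimes_B \H$, one can use the isomorphism of Proposition~\ref{prop-module-structure}, which identifies $\E$ with $\Compact(\H, \E \otimes_B \H)^W$ via $e \mapsto T_e$, together with the identity $\Compact(\E \otimes_B \H) = \Compact(\H, \E\otimes_B\H) \cdot \Compact(\E\otimes_B\H, \H)$-type factorizations coming from $B$ acting on $\H$ by compacts. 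Concretely: since $B = \Compact(\H)^W$ acts on $\H$ by compact operators, the operators $T_e T_f^* = T_e \langle e, \cdot\rangle$-style elementary operators on $\E\otimes_B\H$ span a dense subspace of $\Compact(\E\otimes_B\H)$; intersecting with the $W$-fixed part and using that $T_e$ is $W$-equivariant, one sees the $W$-fixed compacts are spanned by $T_e T_f^* = (e\otimes f^*)\otimes I$, giving density of the range. Closedness of the range is automatic from injectivity (the image of a $*$-homomorphism of $C^*$-algebras is a $C^*$-subalgebra, hence closed), so density plus closedness yields surjectivity.

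For \emph{injectivity}, suppose $S \otimes I = 0$ on $\E \otimes_B \H$. I want to conclude $S = 0$ in $\Compact(\E)$. The point is that $B = \Compact(\H)^W$ acts faithfully and nondegenerately on $\H$ in a way that makes $\E \otimes_B \H$ detect all of $\E$. Using $T_e \colon \H \to \E\otimes_B\H$, we compute $T_f^* (S \otimes I) T_e = T_f^*\, T_{Se} = \langle f, Se\rangle$ acting as a multiplication operator on $\H$ via $B \to \Bounded(\H)$. If $S \otimes I = 0$ then $\langle f, Se\rangle$ acts as $0$ on $\H$ for all $e, f \in \E$; since $B$ acts faithfully on $\H$ (it acts by nonzero compact operators — indeed $b \mapsto b$ on $\H$ is injective because $\Compact(\H)^W \subseteq \Compact(\H)$ and the latter acts faithfully), we get $\langle f, Se\rangle = 0$ in $B$ for all $e, f$, hence $Se = 0$ for all $e$ by positivity of the inner product, hence $S = 0$.

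The main obstacle I anticipate is the surjectivity argument, specifically pinning down precisely why the $W$-fixed compact operators on $\E \otimes_B \H$ are spanned (densely) by the operators of the form $(e \otimes f^*) \otimes I = T_e T_f^*$, rather than merely by $W$-averages of arbitrary elementary operators $(e_1\otimes v_1) \otimes (e_2 \otimes v_2)^*$. This requires noting that an arbitrary elementary operator on $\E\otimes_B\H$ can be written using the factorization $v_1 = \langle \xi, \xi'\rangle_B v_1'$-type identities available because $B$ acts on $\H$ by compacts, thereby absorbing the $\H$-components into $B$-valued inner products that then sit inside $T_e$; averaging over the finite group $W$ and using $W$-equivariance of each $T_e$ then keeps us within the span of the $T_e T_f^*$. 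Once that reduction is in place, matching it against Proposition~\ref{prop-module-structure} and the known isomorphism $\Compact(\E) \cong \Compact(\E\otimes_B B)$ closes the argument cleanly.
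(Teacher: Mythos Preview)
Your overall strategy---prove injectivity and surjectivity separately---is exactly the paper's, and your injectivity argument is correct (indeed more explicit than the paper's, which simply cites \cite[Proposition 4.7]{Lance}).

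The gap is in surjectivity. You assert that ``the operators $T_e T_f^*$ \ldots span a dense subspace of $\Compact(\E\otimes_B\H)$''. This is false whenever $W$ acts nontrivially: each $T_e T_f^* = (e\otimes f^*)\otimes I$ is $W$-invariant, so their span lies inside $\Compact(\E\otimes_B\H)^W$, which is a proper subalgebra in general. You then try to repair this by ``intersecting with the $W$-fixed part'', but that is circular: it is precisely the containment $\Compact(\E\otimes_B\H)^W \subseteq \overline{\mathrm{span}}\{T_eT_f^*\}$ that is at issue.

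What actually needs to be shown is that every $W$-\emph{averaged} elementary operator $\operatorname{Av}_W\bigl[(e_2\otimes v_2)\otimes(e_1\otimes v_1)^*\bigr]$ lies in the image of $S\mapsto S\otimes I$. Your final paragraph gestures at the right mechanism (absorbing the $\H$-components into $B$-valued inner products), but the computation is not carried out. The paper does it explicitly: one checks that the averaged operator acts on $e\otimes v$ as
\[
e\otimes v \;\longmapsto\; e_2\cdot\operatorname{Av}_W[v_2\otimes v_1^*]\otimes \langle e_1,e\rangle\cdot v,
\]
using that the tensor product is over $B=\Compact(\H)^W$ to move $\operatorname{Av}_W[v_2\otimes v_1^*]\in B$ across the tensor; this exhibits the averaged operator as $S\otimes I$ with $S = e_2\cdot\operatorname{Av}_W[v_2\otimes v_1^*]\otimes e_1^* \in \Compact(\E)$. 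That one-line calculation is the missing ingredient in your argument.
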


\begin{remark}
The value of this result is that in our application the $C^*$-algebra $C$ will have a very simple structure---in fact it will be abelian, and it will be easy to calculate the tensor product $\E\otimes _B \H$.
\end{remark}

\begin{proof}[Proof of the Proposition]
It is proved in  \cite[Proposition 4.7]{Lance} that the homomorphism is injective, and moreover it is proved there that the homomorphism is an isomorphism when $W$ is trivial.  The following small modification of the argument in \cite{Lance} handles surjectivity in the general case.  It suffices to show that the operator
\begin{equation}
\label{eq-avgd-op}
\operatorname{Average}  \bigl [ (e_2\otimes v_2 ) \otimes (e_1 \otimes v_1)^* \bigr ]  \in  \Compact  (\E\otimes _B \H )^W
\end{equation}
lie in the image of our homomorphism of $C^*$-algebras, where the average is taken  over the $W$-action on compact operators.  The operator (\ref{eq-avgd-op}) acts on $\E\otimes _B \H$ as follows:
\begin{equation}
\label{eq-average-action}
e \otimes v 
	\mapsto 
\frac{1}{|W|} \sum _{w\in W}  e_2 \otimes U_w(v_2) \, \bigl \langle U_w(v_1) , \langle e_1, e\rangle \, v \bigr \rangle  .
\end{equation}
Here, in the case of a projective action, we lift each element $w\in W$ to an element of the group $\widetilde W$ in (\ref{eq-proj-extension}) before acting on $\H$; the choice of lift does not affect the formula.  Using (\ref{eq-action-on-ops2}) we can rewrite (\ref{eq-average-action}) as 
\begin{equation*}
\label{eq-average-action2}
e \otimes v 
	\mapsto 
 e_2 \otimes  \operatorname{Average} [v_2\otimes v_1^*]  \cdot  \langle e_1, e\rangle \cdot v  ,
\end{equation*}
or equivalently (since the tensor product is over $B=\Compact (\H)^W$)
\begin{equation*}
\label{eq-average-action3}
e \otimes v 
	\mapsto 
 e_2 \cdot  \operatorname{Average} [v_2\otimes v_1^*]  \otimes   \langle e_1, e\rangle\cdot  v  .
\end{equation*}
But this is the formula for the action of the operator $S\otimes I$, where 
\[
S =  e_2  \operatorname{Average} [v_2\otimes v_1^*] \otimes e_1^* \in \Compact (\E) ,
\]
and so the proof is complete.
\end{proof}

\subsection*{Direct Sums}

In the coming sections the $C^*$-algebras $B$ of concern to us, namely the reduced group $C^*$-algebras of reductive groups, will  be \emph{direct sums} of $C^*$-algebras of the type $\Compact(\H)^W$ considered in the last section.  Thus we shall be considering $C^*$-algebras of the form 
\begin{equation}
\label{eq-dir-sum-decomp-B}
B = \bigoplus_\alpha B_\alpha = \bigoplus_\alpha \Compact(\H_\alpha)^{W_\alpha} .
\end{equation}
The direct sum is to be taken in the $C^*$-algebraic sense, which is to say that $B$ is the completion of the algebraic direct sum in the supremum norm.  The $C^*$-algebra operations are defined coordinatewise.

If $\E$ is a Hilbert  module over $B=\oplus _\alpha B_\alpha$, then $\E$ decomposes in a unique way as a direct sum
\begin{equation}
\label{eq-dir-sum-decomp-E1}
\E = \bigoplus _\alpha \E_\alpha,
\end{equation}
with $\E_\alpha$ a Hilbert module over $B_\alpha$.  Once again, the direct sum here is the completion of the algebraic direct sum in the supremum norm, and all operations are defined pointwise (so for example distinct summands of $\E$ are orthogonal to one another, and the inner product of two elements from a single summand $\E_\alpha$ lies in the summand $B_\alpha$ of $B$).

In the situation displayed in (\ref{eq-dir-sum-decomp-B}), applying Proposition~\ref{prop-module-structure}  coordinatewise we obtain an isomorphism
\begin{equation}
\label{eq-dir-sum-decomp-E2}
\mathcal E \stackrel\cong \longrightarrow  
\bigoplus_\alpha \Compact  (\H_\alpha , \E\otimes _{B } \H_\alpha )^{W_\alpha},
\end{equation}
which describes the summands $\E_\alpha$.  Here $\H_\alpha$ is given a left action of $B$ through the projection
\[
B \longrightarrow B_\alpha =  \Compact(\H_\alpha)^{W_\alpha} .
\]
Similarly, applying Proposition~\ref{prop-cpt-iso2} coordinatewise we obtain an isomorphism 
\begin{equation}
\label{eq-dir-sum-decomp-E3}
\Compact  (\E)\stackrel \cong  \longrightarrow \bigoplus_\alpha  \Compact  (\E_\alpha \otimes _{B} \H_\alpha )^{W_\alpha} .
\end{equation}

 \section{Reductive Groups and Parabolic Subgroups}
 \label{sec-red-groups}

 We shall not attempt to strive for the utmost generality in the class of groups we shall consider. Instead we shall aim for (relative) simplicity. This will  also guarantee that the diverse references that we shall cite in the next several sections will actually  cover our class of groups. 
 
Consider first the class of   \emph{connected, self-adjoint matrix groups}.  This is the class of those  closed, connected  subgroups of the matrix groups $\mathrm{GL}(n,\R)$ that are invariant under the transpose operation on matrices.  Given such a group $G\subseteq GL(n,\R)$, the connected Lie subgroup  $G_\C\subseteq \mathrm{GL}(n,\C)$ whose Lie algebra is the complexification of the Lie algebra of $G$  is a connected (in the algebraic sense \cite[\S 7.3]{MR0396773}) reductive algebraic group defined over $\R$.  The group $G$ is an open subgroup (in the usual analytic topology on matrices) in the group of real points of this algebraic group.   The group of all real points need not itself be connected, although it has at most finitely many components.

Although connectedness is a natural assumption, for technical reasons it will be more convenient to work with the full group of real points.  So from now on, we shall let $G \subseteq \mathrm{GL}(n,\R)$ be a self-adjoint group which  is also  the group of   real points of a connected (and necessarily reductive) algebraic group defined over $\R$.  For brevity, we shall simply say that $G$ is  a \emph{real reductive group}.  

This convention excludes some examples that we might otherwise consider (for instance the group of all matrices with positive determinant), but  it has the winning advantage for us that the class of groups under consideration now  is closed under passage to the  standard Levi subgroups whose definition we shall recall in a moment.

\begin{definition}
\label{def-K-grp}
Let $K = G\cap O(n)$, which is  a maximal compact subgroup of $G$ \cite[Proposition 1.2]{Knapp1}.
\end{definition}

\begin{definition}
\label{def-A-grp}
 Let $A$ be a maximal abelian subgroup  of positive-definite matrices in $G$. \end{definition}

\begin{remark}
The group $A$ is not unique, but it is unique  up to conjugacy by an element of $K$ \cite[Chap. VII]{KnappBeyond}.   
\end{remark}

The positive-definite group $A$ is isomorphic to its Lie algebra $\mathfrak a$ via the exponential map.  We can use  elements of $\mathfrak a$ to define \emph{standard Levi subgroups} of $G$, as follows. 

\begin{definition}
The \emph{standard Levi subgroups} of $G$ (for a given choice of subgroup $A\subseteq G$) are the subgroups of the form 
\[
L = L_X= \{\, g \in G : \exp(tX) g \exp(-tX) = g \,\, \forall t \in \R \, \},
\]
associated to elements $X\in \mathfrak a$.  They are real reductive groups.
\end{definition}

There is a  dense open set of elements   $X\in \mathfrak a$ that all define the same group $L$.  This particular $L$  is minimal in dimension among all standard Levi subgroups and indeed is contained in every other standard Levi subgroup (for example, if $G=\mathrm{GL}(n,\R)$, and if $A$ is the group of positive diagonal matrices, then all $X$ with distinct diagonal entries define the same standard Levi subgroup  of diagonal matrices).   Fix a connected component  of this dense open set in $\mathfrak a$, and call it the \emph{positive chamber} $\mathfrak a_{+}\subseteq \mathfrak a$  (there are finitely many choices).

\begin{definition}
The \emph{standard unipotent subgroups} of $G$ (for a given choice positive-definite group $A$ and positive  chamber $\mathfrak a_{+}\subseteq \mathfrak a$) 
are the closed subgroups 
\[
N = N_X =  \{\, g \in G :  \lim_{t \to + \infty} \exp(tX) g \exp(-tX) = e   \, \},
\]
associated to elements $X\in \overline{\mathfrak a_{+}}$.  The \emph{standard parabolic subgroups} of $G$ are the closed subgroups  
\[P =P_X = L_XN_X=LN ,\]
associated to elements $X\in \overline{\mathfrak a_{+}}$.  
\end{definition}

It is obvious from the definitions that $L$ normalizes $N$, so the product $P= LN$ is indeed a subgroup, isomorphic to the semidirect product of $L$ acting  on $N$ by conjugation.  In fact $P$  is a \emph{closed} subgroup of $G$, diffeomorphic to the Cartesian product of the spaces $L$ and $N$  \cite[Chap. VII \S7]{KnappBeyond}.

\begin{example} If $G=\mathrm{GL}(n,\R)$, and if $\mathfrak a _{+}$ consists of diagonal matrices whose entries increase down the diagonal, then the standard parabolic subgroups are the various block upper triangular subgroups (for the various possible sequences of block sizes).  Their \emph{Levi factors} $L$ are the block diagonal groups, and the unipotent subgroups $N$ are the block unipotent upper triangular subgroups.
 \end{example}
 
\begin{remark}
Different choices of chamber $\mathfrak a_{+}\subseteq \mathfrak a$ are conjugate to one another via elements of $K$ that normalize $\mathfrak a$.  As a result, different choices of chamber lead to conjugate families of standard parabolic subgroups.
\end{remark}

\section{Parabolic Induction}
\label{sec-parabolic-induction}

Fix throughout this section  a real reductive group $G$ (along with a choice of positive-definite subgroup $A$ and positive chamber $\mathfrak a_{+}$).  In addition, fix  a standard parabolic subgroup $P=LN \subseteq G$, as in the previous section. 

Apart from being a subgroup of $P=L\ltimes N$, the  {Levi factor} $L=P/N$ is also a quotient. So if $\tau\colon L \to \mathrm{U}(H)$ is a unitary representation of $L$, then we can consider $\tau$ as a representation of $P$ too, and so form the unitarily induced representation 
\[
\Ind_P^G \tau \colon G \longrightarrow \mathrm{U}(\Ind _P^G H) .
\]
This is the functor of \emph{parabolic induction}, going from unitary representations of $L$ to unitary representations of $G$, and its behaviour on tempered unitary representations   will be our main concern in the rest of the paper.

 We shall now  recall the construction of the $\left(C^*_r(G),C^*_r (L)\right)$-correspondence $C^*_r (G/N)$ from \cite[Section 2]{PArtmodules}, and then prove a few elementary facts about it.

  As a Banach space,  $C^*_r (G/N)$ is a   completion of the space of smooth, compactly supported functions on the homogeneous space $G/N$.  There is a $G$-invariant smooth measure on $G/N$, which is unique up to a multiplicative constant. We choose it with respect to the fixed Haar measures on $G$ and $N$ so that \[\int_Gf(g)\,dg=\int_{G/N}\int_N f(gn)\,dn\,d(gN)\] for any measurable function $f$.  By $G$-invariance the  natural left translation action of $G$ on $C^\infty _c(G/N)$ is unitary for the $L^2$-inner product. There is an associated convolution action 
  \[
  C_c^\infty (G) \otimes C_c^\infty (G/N) \longrightarrow C_c^\infty(G/N)
  \]
  defined by the usual formula
  \begin{equation}\label{eq-def-convol}
  (f_0 * f)(gN) = \int _G f_0(\gamma)f(\gamma^{-1}gN)\, d\gamma.
  \end{equation}

  The $G$-invariant measure on $G/N$ is    not  invariant for the natural \emph{right} action of the Levi factor $L$. Instead there is a  character  $\delta \colon L  \to \R^\times_{+}$ such that  
\[
\int_{G/N} f(x \ell  ) \, dx = \delta (\ell)^{-1} \int _{G/N} f(x)\, dx
\]
for all $f\in C_c^\infty(G/N)$ and all $\ell \in L$.  In fact 
\begin{equation}
\label{eq-delta-def}
\delta (\ell)    =  \left |  \operatorname{det} \left ( \operatorname{Ad}_\ell \colon \mathfrak n \to \mathfrak n\right ) \right |  .
\end{equation}
But if we adjust the right   action    of $L$  on the function space $C_c^\infty(G/N)$  by means of the formula
\[
(f\cdot \ell )(x)  = \delta (\ell )^{- \frac 12} f(x\ell^{-1}) ,
\]
then we obtain  a unitary action for the natural $L^2$-inner product. There is an associated convolution action
\[
C_c ^\infty (G/N) \otimes C_c^\infty (L) \longrightarrow C_c^\infty (G/N)
\]
defined by
\[
\begin{aligned}
( f  * f _1) ( x)  &  = \int _L \delta (\ell) ^{-\frac 12} f(x\ell^{-1}) f_1 (\ell ) \, d\ell  \\
	& =  \int _L \delta (\ell) ^{\frac 12} f(x\ell) f_1 (\ell^{-1}) \, d\ell,  \\
	\end{aligned}
\]
where the integrals are equal because $L$ is unimodular. Finally, a $C_c^\infty (L)$-valued inner product is defined on $C_c^\infty (G/N)$ by
\[
 \langle h, f \rangle \colon \ell \mapsto  \delta (\ell)^{\frac 12}  \int _{G/N}\overline{h (x)}f (x\ell) \, dx ,
\]
or equivalently
\[
 \langle h, f  \rangle \colon \ell \mapsto \delta (\ell)^{-\frac 12} \int _{G/N} \overline{h(x\ell^{-1})}f (x ) \, dx 
\]
(the integrands are compactly supported functions because the right action of $L$ on the homogeneous space $G/N$ is proper, which in turn follows from the fact that $LN$ is a closed subgroup of $G$). All these structures extend by completion to give the Hilbert module $C^*_r (G/N)$.  See \cite[Proposition 1]{PArtmodules}.

\begin{remark}
In \cite{PArtmodules}, the Hilbert module $C^*_r (G/N)$ is shown to admit a left action of the \emph{full} group $C^*$-algebra $C^*(G)$.  To see that the left action  factors through $\Csr(G)$, let $f_0\in C_c^\infty(G)$ and let 
\[
T \colon C^*_r (G/N) \longrightarrow C^*_r (G/N)
\]
 be the convolution operator determined  by the formula  \eqref{eq-def-convol}.  We need to prove that the operator norm of $T$ is bounded by the reduced $C^*$-algebra norm of $f_0$. 
 
Let $\psi$ be a  faithful state of $\Csr(L)$.  The formula 
\[
\langle f_1,f_2\rangle_\psi = \psi \left ( \langle f_1,f_2 \rangle_{C^*_r (G/N)} \right )
\]
defines a scalar inner product on $C^*_r (G/N)$.  Denote by $\Csr(G/N)_\psi$   the associated Hilbert space completion.  
Each bounded, adjointable operator on $C^*_r (G/N)$ extends to a bounded operator on $C^*_r (G/N)_\psi$, and the \emph{localization map}
 \[
 \Bounded(\Csr(G/N))\longrightarrow\Bounded(\Csr(G/N)_\psi)
 \]
 defined in this manner  is an injective, and hence isometric, homomorphism of $C^*$-algebras; see \cite[page 55]{Lance}. 
 
Returning to the matter at hand, it follows that  the norms of $T$ as an operator on $\Csr(G/N)$ and on $\Csr(G/N)_\psi$ are equal. But the representation of $C^*(G)$ on $C^*_r (G/N)_\psi$ is easily checked to be weakly contained in $L^2 (G/N)$, and the representation of $C^*(G)$ on this Hilbert space factors through $C^*_r (G)$ because $N$ is amenable. 
\end{remark}

The significance of the correspondence $C^*_r (G/N)$ is that it implements the functor of parabolic induction:

\begin{proposition}\textup{(See \cite[Corollary 1]{PArtmodules}.)}\label{prop-Cups-induce}
Let $\tau$ be a tempered unitary  representation of $L$ on a Hilbert space $H$.   The parabolically induced representation $\Ind_P^G \tau$ is unitarily equivalent to the representation of $G$ on the Hilbert space $C^*_r (G/N)\otimes _{C^*_r (L)} H$.
\end{proposition}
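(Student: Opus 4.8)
The plan is to produce an explicit isometric, $G$-equivariant isomorphism, working first between dense subspaces of smooth compactly supported functions and then completing. Given $f\in C_c^\infty(G/N)$ and $v\in H$ I would set
\[
\xi_{f,v}(g)\;=\;\int_L \delta(\ell)^{\frac12}\,f(g\ell)\,\tau(\ell)v\;d\ell\qquad(g\in G),
\]
the exact powers of $\delta$ being pinned down by the conventions fixed above for $\delta$ and for normalized parabolic induction. The first task is to check that $\xi_{f,v}$ belongs to the Hilbert space of $\Ind_P^G\tau$: it is smooth; it is right $N$-invariant, since $f$ lives on $G/N$ and $\tau$ is trivial on $N$; the substitution $\ell\mapsto\ell_0^{-1}\ell$ in the integral yields the covariance $\xi_{f,v}(g\ell_0)=\delta(\ell_0)^{-\frac12}\tau(\ell_0)^{-1}\xi_{f,v}(g)$; and $\xi_{f,v}$ has compact support modulo $P$ because the right $L$-action on $G/N$ is proper. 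The same change of variables shows that $(f,v)\mapsto\xi_{f,v}$ is balanced over the convolution action of $C_c^\infty(L)$, so it descends to a linear map out of $C_c^\infty(G/N)\otimes_{C_c^\infty(L)}H$, which sits densely inside $C^*_r(G/N)\otimes_{C^*_r(L)}H$.

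Equivariance is immediate: replacing $f$ by its left translate $g_0\cdot f$ inside the integral gives $\xi_{f,v}(g_0^{-1}g)$, the left translate of $\xi_{f,v}$; once the map is known to be bounded this upgrades automatically to intertwining the integrated $C^*_r(G)$-actions, so it suffices to treat point translations. The substantive step is the isometry. On the tensor-product side, Definition~\ref{def-interior-tp} gives $\langle f\otimes v,\,h\otimes w\rangle=\langle v,\,\tau(\langle f,h\rangle_{C^*_r(L)})w\rangle_H$, which, after inserting the formula for the $C^*_r(L)$-valued inner product on $C^*_r(G/N)$, is a double integral over $L\times(G/N)$. On the induction side, $\langle\xi_{f,v},\xi_{h,w}\rangle$ is the integral over $G/P$ of $\langle\xi_{f,v}(g),\xi_{h,w}(g)\rangle_H$, itself a double integral over $L\times L$; substituting in one of the two $L$-variables and using the unitarity of $\tau$ converts it into a triple integral over $(G/P)\times L\times L$ carrying a weight $\delta$ in one of the $L$-variables. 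These are reconciled by the fibration $G/N\to G/P$ with fiber $L$: from the scaling identity $\int_{G/N}F(x\ell)\,dx=\delta(\ell)^{-1}\int_{G/N}F(x)\,dx$ recorded in the construction of $C^*_r(G/N)$ one derives the disintegration
\[
\int_{G/N}F(x)\,dx\;=\;\int_{G/P}\int_L F(g\ell N)\,\delta(\ell)\,d\ell\;d(gP),
\]
and applying it to the $\delta$-weighted inner $L$-integral collapses the triple integral to exactly the double integral computed on the other side.

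It remains to see that the map has dense range. Its image consists of smooth functions with compact support modulo $P$, and these already span the standard dense subspace of $\Ind_P^G H$ of such functions; concretely, sending $F\in C_c^\infty(G)$ to its fiber integral $\bar F\in C_c^\infty(G/N)$ over $N$ identifies $\xi_{\bar F,v}$, up to the modular bookkeeping, with the value at $F\otimes v$ of the familiar surjection $C_c^\infty(G)\otimes H\twoheadrightarrow\Ind_P^G H$ obtained by averaging over $P$, so surjectivity of the latter forces density of the former. Combining isometry, equivariance, and dense range, the map extends to a unitary isomorphism intertwining the $G$-representations, which is the assertion.

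The only genuine obstacle is bookkeeping. The character $\delta$ enters in four interlocking places --- the adjusted right $L$-action on $C_c^\infty(G/N)$, the $C^*_r(L)$-valued inner product, the modulus character built into unitary parabolic induction, and the disintegration of the measure along $G/N\to G/P$ --- and all four must be kept consistent, since a single misplaced square root of $\delta$ breaks the isometry. By contrast, temperedness of $\tau$ plays no role in the argument itself: it is precisely the hypothesis that lets $\tau$ factor through $C^*_r(L)$, hence the hypothesis that makes the tensor product $C^*_r(G/N)\otimes_{C^*_r(L)}H$ meaningful in the first place.
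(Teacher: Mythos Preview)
Your argument is correct. The explicit intertwiner $\xi_{f,v}(g)=\int_L\delta(\ell)^{1/2}f(g\ell N)\,\tau(\ell)v\,d\ell$ does satisfy the required covariance (your substitution $\ell\mapsto\ell_0^{-1}\ell$ gives it), is balanced over $C_c^\infty(L)$, and the isometry computation goes through exactly as you describe once one uses the disintegration $\int_{G/N}F(x)\,dx=\int_{G/P}\int_L F(g\ell N)\,\delta(\ell)\,d\ell\,d(gP)$, which is indeed forced by the scaling identity recorded in the paper. One small cosmetic point: in the present setting $G/P$ is compact (since $G=KP$), so ``compact support modulo $P$'' is automatic; the properness of the right $L$-action is really being used to guarantee that the defining integral for $\xi_{f,v}$ has compact support in $\ell$, hence converges.

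As for comparison with the paper: there is nothing to compare. The paper does not prove this proposition at all; it simply cites \cite[Corollary~1]{PArtmodules}. What you have written is essentially the standard direct construction that underlies that reference, so you have supplied the argument the paper omits rather than found an alternative to it.
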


\begin{remark} 
We might call the correspondence $\Csr(G/N)$ the \emph{$\Cs$-algebraic universal principal series}, following  similar terminology that is used in the $p$-adic context; see \cite{BezKazh}. 
\end{remark}

\begin{definition} Let $\H$ be a correspondence from $C^*_r(L)$ to a $C^*$-algebra $C$. We define the \emph{parabolically induced Hilbert module} $\Ind _P^G \H$  to be the Hilbert module 
\[
\Ind_P^G \H = C^*_r (G/N)\otimes_{C^*_r (L)} \H.
\]
It is a correspondence from $C^*_r(G)$ to $C$.
\end{definition}

\begin{proposition}
\label{prop-compact-action}The \Cs-algebra $\Csr(G)$ acts  by compact operators on the Hilbert module $C^*_r (G/N)$.\end{proposition}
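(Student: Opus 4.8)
\emph{Proof strategy.}\ By the preceding Remark the left action of $C^*_r(G)$ on $C^*_r(G/N)$ is bounded, and on $C_c^\infty(G)$ it is implemented by the convolution operators $T_{f_0}\colon v\mapsto f_0\ast v$ of \eqref{eq-def-convol}. Since $C_c^\infty(G)$ is dense in $C^*_r(G)$ and $\Compact(C^*_r(G/N))$ is closed in $\Bounded(C^*_r(G/N))$, the plan is to show that each such $T_{f_0}$ is a norm-limit of finite sums of elementary operators $h\otimes k^*\colon v\mapsto h\langle k,v\rangle$ with $h,k\in C_c^\infty(G/N)$. Conceptually this is a concrete instance of the principle that a proper action with compact quotient produces compact operators: here the right $L$-action on $G/N$ is proper, with quotient the \emph{compact} flag manifold $G/P$.

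First I would realize the relevant operators as integral operators on $G/N$. A change of variable in \eqref{eq-def-convol}, followed by the disintegration $\int_G=\int_{G/N}\int_N$, gives
\[
(T_{f_0}v)(gN)=\int_{G/N}K_{f_0}(gN,\eta N)\,v(\eta N)\,d(\eta N),
\qquad
K_{f_0}(gN,\eta N)=\int_N f_0(gn^{-1}\eta^{-1})\,dn,
\]
while the formulas of Section~\ref{sec-parabolic-induction} show that $h\otimes k^*$ is the integral operator with kernel $K_{h,k}(x,z)=\int_L\delta(\ell)\,h(x\ell)\,\overline{k(z\ell)}\,d\ell$. Both kernels are continuous and transform the same way under the right $L$-action, $K(x\ell,z\ell)=\delta(\ell)^{-1}K(x,z)$ --- for $K_{f_0}$ this uses \eqref{eq-delta-def} --- and, using $G=KP$, the support of $K_{f_0}$ is contained in $(\Sigma\times\Sigma)\cdot L$ for some compact $\Sigma\subseteq G/N$: writing $\eta=k\ell n$ with $k\in K$ displays any $(gN,\eta N)\in\operatorname{supp}K_{f_0}$ as $(x\ell,z\ell)$ with $x,z$ lying in the compact image in $G/N$ of $(\operatorname{supp}f_0\cup\{e\})K$.

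Next I would introduce a cutoff. Because $G/P$ is compact and the right $L$-action is proper, the usual partition-of-unity construction provides $c\in C_c^\infty(G/N)$ with $c\geq0$ and $\int_L|c(x\ell)|^2\,d\ell=1$ for all $x$ (the integral converges by properness, is constant on $L$-orbits, hence descends to $G/P$, where it is bounded below and may be normalized). Inserting this and using the equivariance of $K_{f_0}$,
\[
K_{f_0}(x,z)=\int_L K_{f_0}(x,z)\,|c(z\ell)|^2\,d\ell=\int_L\delta(\ell)\,\Phi(x\ell,z\ell)\,d\ell,
\qquad
\Phi(a,b):=K_{f_0}(a,b)\,|c(b)|^2,
\]
and $\Phi$ is now a \emph{compactly supported} continuous function on $G/N\times G/N$. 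Approximating $\Phi$ uniformly, with supports inside a fixed compact set, by finite sums $\Phi_n=\sum_i p_i\otimes q_i$ of products of functions in $C_c^\infty(G/N)$, the operators with kernels $\int_L\delta(\ell)\,\Phi_n(x\ell,z\ell)\,d\ell$ are precisely the finite sums of elementary operators $\sum_i p_i\otimes\overline{q_i}^{\,*}$, hence compact; and they should converge to $T_{f_0}$ in operator norm. This last convergence is the main obstacle: it is a Schur-type estimate bounding the operator norm of the kernel $\int_L\delta(\ell)(\Phi-\Phi_n)(x\ell,z\ell)\,d\ell$ in terms of $\|\Phi-\Phi_n\|_\infty$ and the fixed compact supports, and keeping track of the modular factor $\delta$ makes it a little delicate. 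I expect the cleanest way to carry it out is to pass to the Hilbert-space localization $C^*_r(G/N)_\psi$ of the Remark, in which these operators become ordinary integral operators whose kernel supports project to the compact space $G/P$, so that the classical Schur test applies and produces a bound depending continuously on $\|\Phi-\Phi_n\|_\infty$. Granting this estimate, the closedness of $\Compact(C^*_r(G/N))$ completes the proof.
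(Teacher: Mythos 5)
Your strategy is essentially the paper's proof: realize $T_{f_0}$ as a kernel operator on $G/N$, observe the $\delta$-homogeneity $K(x\ell,y\ell)=\delta(\ell)^{-1}K(x,y)$, use $G=KP$ to see that $\operatorname{supp}K_{f_0}$ has compact image in $(G/N\times G/N)/L$, write $K_{f_0}(x,y)=\int_L\delta(\ell)\,\Phi(x\ell,y\ell)\,d\ell$ for a compactly supported $\Phi$, approximate $\Phi$ uniformly by sums of products, and identify the operator attached to a single product with a rank-one operator $h_1\otimes h_2^*$. The one place you depart from the paper is in how the compactly supported representative $\Phi$ is produced: the paper simply asserts that a suitable $u\in C_c^\infty(G/N\times G/N)$ with $k(x,y)=\int_L u(x\ell,y\ell)\delta(\ell)\,d\ell$ exists, whereas you construct it explicitly by inserting a normalized cutoff $c$ on $G/N$ with $\int_L|c(x\ell)|^2\,d\ell=1$ — a small but genuine improvement in explicitness, and the same device one would use to prove the paper's assertion anyway.

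You are also right that the remaining issue is the quantitative estimate that, for kernels with a fixed compact support mod $L$, the operator norm on $C^*_r(G/N)$ is controlled by $\|\Phi\|_\infty$. It is worth knowing that the paper itself does not supply this estimate either — it is buried in the sentence asserting that any smooth homogeneous kernel with compact support in $(G/N\times G/N)/L$ ``defines a bounded, adjointable operator,'' and is then used silently when passing from uniform convergence of $\Phi_n\to\Phi$ to operator-norm convergence. So flagging this point is perceptive, but it is not a gap in your argument relative to the paper's own level of detail. One caveat on your proposed route to the estimate: in the localization $C^*_r(G/N)_\psi$ the inner product is $\psi(\langle\cdot,\cdot\rangle_{C^*_r(L)})$ for a faithful state $\psi$ of $C^*_r(L)$, not the $L^2(G/N)$ inner product, so the operators do not literally become integral operators on $L^2(G/N)$ and a naive Schur test is not immediately available; some further work (e.g.\ choosing $\psi$ appropriately, or dominating the kernel by an elementary positive kernel with the same support) would be needed to carry out the estimate cleanly. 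This is a point the paper glosses over too, so on balance your proposal and the published proof are at the same level of rigor, by essentially the same route.
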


\begin{proof}  Let  $f_0\in C_c^\infty (G)$ and let  $f\in C_c^\infty (G/N)$.  Then 
\[
\begin{aligned}
(f_0*f)(x) & = \int _G f_0(\gamma) f(\gamma^{-1}x) \, d\gamma \\
& = \int _{G/N} k(x,y) f(y) \, dy ,
\end{aligned}
\]
where
\[
k(g_1N,g_2N) = \int _N f_0(g_1ng_2^{-1}) \, dn .
\]
The kernel function 
\begin{equation}
\label{eq-kernel-G-mod-N}
k\colon G/N \times G/N \longrightarrow  \C
\end{equation}
 defined from $f_0$ by the above integral  is $\delta$-homogeneous   under the right action of $L$  in the sense that 
\begin{equation}
\label{eq-delta-homog}
k(x\ell, y\ell) = \delta (\ell)^{-1} k(x,y) .
\end{equation}
Here $\delta$ is the character (\ref{eq-delta-def}). So the  support of $k$ is an $L$-invariant closed set in $G/N\times G/N$ for the diagonal right action of $L$. 

We claim that the image of the support of the kernel function $k$  in the quotient space $\left ( G/N\times G/N\right ) / L$  is  a \emph{compact} set. To see this, consider the mapping from $G/P\times \operatorname{supp}(f_0)$ to $\left( G/N\times G/N \right) / L$ given by
\begin{equation}\label{eq-G-mod-P-supp}
(h_1 P, h_2)\mapsto (h_1 N, h_2^{-1} h_1 N)L.
\end{equation}
If $(g_1 N, g_2 N)$ lies in the support of $k$ then there is some $n\in N$ for which $g_1 n g_2^{-1}$ lies in the support of $f_0$, and then the image of $(g_1 N, g_2 N)$ in the quotient $\left( G/N \times G/N \right) / L$ is equal to the image of the point $(g_1 P, g_1 n g_2^{-1})$ under \eqref{eq-G-mod-P-supp}. Thus the image of the support of $k$ in $\left( G/N \times G/N \right) / L$ is contained in the image of \eqref{eq-G-mod-P-supp}. But $G=KP$  \cite[Proposition 7.83]{KnappBeyond}, so $G/P$ is compact and \eqref{eq-G-mod-P-supp} has compact image.

Now, any smooth kernel function as in \eqref{eq-kernel-G-mod-N} with the   homogeneity   property \eqref{eq-delta-homog}  whose support is compact   in $\left ( G/N\times G/N\right ) / L$ defines a bounded, adjointable operator on $C^*_r(G/N)$.  We shall show that these operators are all compact. 

Each such kernel function $k$ may be written in the form 
 \[k(x,y)= \int_L u(x\ell,y\ell)\delta(\ell)\,d\ell\] for some  $u\in C_c^\infty(G/N\times G/N)$. The function  $u$  may  be approximated in the uniform norm by linear combinations of elementary functions  $(x,y)\mapsto h_1(x)h_2(y)$, with all the elementary functions uniformly compactly supported. It suffices to show that  a kernel function  
\[
 k(x_1,x_2) = \int _L h_1 (x_1\ell) h_2(x_2\ell) \delta (\ell) \, d\ell 
 \]
associated to a single elementary function gives rise to a compact operator.  

The operator in this case maps $f\in C_c^\infty (G/N) $ to the function 
 \[
 x\mapsto \int_{G/N} \int _L h_1(x\ell) h_2(y\ell)\delta(\ell) f(y) \, dy d\ell ,
 \]
 which may be re-written as 
 \[
  x\mapsto \int _L \delta(\ell)^{\frac 12} h_1(x\ell) \int_{G/N}  h_2(y\ell)\delta(\ell)^{\frac 12} f(y) \, dy d\ell .
 \]
This is precisely how the rank one operator
 \[
 h_1 \otimes h_2^*\colon  f \mapsto h_1 \langle h_2 , f\rangle ,
 \]
acts.    
\end{proof}

\begin{corollary}
\label{cor-cpt-action1}
If $C^*_r (L)$ acts  through compact operators on a Hilbert module $\H$, then $C^*_r (G)$ acts through compact operators on the parabolically induced Hilbert module $\Ind _P^G  \H$.
\end{corollary}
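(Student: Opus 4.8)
The plan is to obtain the corollary as an immediate consequence of Proposition~\ref{prop-compact-action} and Lemma~\ref{lem-cpt-tensor-one}, with no new analysis needed. Write $\E = C^*_r(G/N)$, viewed as a Hilbert $C^*_r(L)$-module, so that by definition $\Ind_P^G \H = \E \otimes_{C^*_r(L)} \H$. The left action of $C^*_r(G)$ on this interior tensor product is, by construction, given on elementary tensors by $f_0 \cdot (e \otimes v) = (f_0 \cdot e)\otimes v$; equivalently, the operator by which an element $f_0 \in C^*_r(G)$ acts on $\Ind_P^G\H$ is $S_{f_0}\otimes I$, where $S_{f_0} \in \Bounded(\E)$ is the operator by which $f_0$ acts on $\E = C^*_r(G/N)$ via its correspondence structure, and $S \mapsto S \otimes I$ denotes the homomorphism $\Bounded(\E)\to\Bounded(\E\otimes_{C^*_r(L)}\H)$ of \cite[p.~42]{Lance}.

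With this identification in hand, the two hypotheses of Lemma~\ref{lem-cpt-tensor-one} are exactly what is available: Proposition~\ref{prop-compact-action} says that $S_{f_0}$ is a \emph{compact} operator on $\E$ for every $f_0 \in C^*_r(G)$, and the assumption of the corollary is precisely that $C^*_r(L)$ acts on $\H$ through compact operators. Applying Lemma~\ref{lem-cpt-tensor-one} with $B = C^*_r(L)$ and $S = S_{f_0}$ therefore shows that $S_{f_0}\otimes I$ is a compact operator on $\E \otimes_{C^*_r(L)}\H = \Ind_P^G\H$. Since $S_{f_0}\otimes I$ is precisely the action of $f_0$, this says exactly that $C^*_r(G)$ acts through compact operators on $\Ind_P^G\H$, as claimed.

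There is no real obstacle here; the one point that needs to be stated carefully is the identification of the $C^*_r(G)$-action on $\Ind_P^G\H$ with the operator $S_{f_0}\otimes I$ appearing in Lemma~\ref{lem-cpt-tensor-one}, so that the hypotheses of that lemma are manifestly met. This is immediate from the definition of the interior tensor product of correspondences (Definitions~\ref{def-A-B-bimodule} and~\ref{def-interior-tp}), but it is worth spelling out, since it is the only place in the argument where the correspondence structure on $C^*_r(G/N)$ — rather than merely its structure as a Hilbert $C^*_r(L)$-module — is actually used.
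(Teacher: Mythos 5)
Your proof is correct and follows exactly the paper's route: the paper derives the corollary as an immediate consequence of Proposition~\ref{prop-compact-action} and Lemma~\ref{lem-cpt-tensor-one}, and you have simply spelled out the identification of the $C^*_r(G)$-action on $\Ind_P^G\H$ with $S_{f_0}\otimes I$ that makes the application of the lemma explicit.
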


\begin{proof}
This is a consequence of Proposition~\ref{prop-compact-action} and Lemma~\ref{lem-cpt-tensor-one}.
\end{proof}

The corollary applies to any irreducible unitary Hilbert space  representation of $G$  as a result of a fundamental theorem of Harish-Chandra (see \cite[Theorem 6, p.230]{HCRSS1}):  

\begin{theorem}\label{thm-cpt-action1}
The $C^*$-algebra of a real reductive group acts by compact operators in any irreducible unitary representation of $G$.
\end{theorem}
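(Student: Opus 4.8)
The plan is to reduce the statement to a single fundamental result of Harish-Chandra, namely that the matrix coefficients of an irreducible unitary representation $\pi$ of $G$ which is also \emph{tempered} — equivalently, the $K$-finite matrix coefficients lie in the Harish-Chandra Schwartz space — decay rapidly, together with the fact that an irreducible unitary representation in the support of the Plancherel measure has an integrable power of its matrix coefficients. More precisely, the key input is: if $\pi$ is irreducible and unitary, then every matrix coefficient of $\pi$ is $L^{2+\varepsilon}(G)$ for all $\varepsilon>0$ (this is \cite[Theorem 6, p.\,230]{HCRSS1}, which in fact asserts that $C^*(G)$ acts by compact operators, but the standard route to that statement goes through decay of matrix coefficients).

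First I would recall the general principle, valid for any locally compact group $G$, that if a unitary representation $(\pi,\H_\pi)$ has the property that for some (equivalently every) nonzero $K$-finite vector $v$ the matrix coefficient $g\mapsto \langle \pi(g)v,v\rangle$ lies in $L^p(G)$ for some $p<\infty$, then $\pi$ restricted to $C^*(G)$, or rather $C^*_r(G)$, sends the convolution algebra into compact operators. The mechanism is: for $f\in C_c^\infty(G)$ and $v$ a $K$-finite vector, the vector $\pi(f)v$ can be written as an integral $\int_G f(g)\pi(g)v\,dg$, and using the $L^p$-decay of matrix coefficients one shows that $\pi(f)$ maps into the closure of a space of finite-rank operators built from $K$-types; Harish-Chandra's admissibility theorem guarantees each $K$-isotypic subspace is finite-dimensional, so the operators approximating $\pi(f)$ are genuinely finite rank. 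Taking norm-limits, $\pi(f)$ is compact for all $f\in C_c^\infty(G)$, and since these are dense in $C^*_r(G)$ and the compacts form a closed ideal, $\pi(C^*_r(G))\subseteq\Compact(\H_\pi)$. For the purposes of this paper, though, the cleanest thing is simply to invoke Harish-Chandra's theorem as stated, observing that although he formulates it for $C^*(G)$, an irreducible unitary representation of a reductive group that is \emph{not} tempered is still covered — the compactness conclusion holds for \emph{all} irreducible unitary representations, whether tempered or not, because admissibility of all irreducible unitary representations (again Harish-Chandra) plus the decay of matrix coefficients (Cowling--Haagerup--Howe type estimates, or Harish-Chandra's original work) force compactness without any temperedness hypothesis.

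The honest statement of the proof is therefore: cite \cite[Theorem 6, p.\,230]{HCRSS1}, note that it is phrased for $C^*(G)$ but that compact operators on $\H_\pi$ form a $C^*$-ideal, hence the image of the induced homomorphism $C^*(G)\to\Bounded(\H_\pi)$ lies in $\Compact(\H_\pi)$, and finally observe that the homomorphism $C^*(G)\to\Bounded(\H_\pi)$ factors through $C^*_r(G)$ precisely when $\pi$ is tempered — but since in this paper $\pi$ ranges over the reduced dual, i.e.\ the tempered irreducible representations, this is automatic. I expect the only real subtlety to be one of \emph{attribution and scope}: Harish-Chandra's Theorem 6 as literally printed may require translation from his notation and conventions, and one must be slightly careful that his "irreducible unitary representation'' matches ours (it does: he works with the same class of reductive groups, and admissibility plus unitarity is all that is used). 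No genuine mathematical obstacle arises here — the theorem is quoted, not reproved; the work, such as it is, lies entirely in making sure the cited result is applied to the correct category of representations.
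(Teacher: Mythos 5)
Your proposal matches the paper's treatment exactly: the paper offers no proof beyond citing Harish-Chandra's \cite[Theorem 6, p.\,230]{HCRSS1}, together with a remark listing alternative operator-algebraic references. Your added commentary is harmless but slightly beside the point: the theorem as stated concerns $C^*(G)$ and \emph{all} irreducible unitary representations (it is the assertion that $G$ is liminal, and the operative input is admissibility---for $K$-bi-finite $f$ the operator $\pi(f)$ is finite-rank, and such $f$ are dense---rather than $L^{2+\varepsilon}$-decay of matrix coefficients), so the worry about factoring through $C^*_r(G)$ does not arise at this stage and enters only later when the theorem is applied to tempered representations.
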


\begin{remark} For proofs of this theorem more congenial to operator algebra theory, see \cite[Theorem 2]{Godementatheory} or \cite{MR0104756}. See also \cite{MR0099380}.
\end{remark}

Let us now examine the tempered representations of $L$ in more detail.  The group $L$ factors canonically as a Cartesian product of two closed and commuting Lie subgroups. We'll follow tradition and express this in terms of the \emph{Langlands decomposition}
\[
P = \left(M_P \times A_P\right)\ltimes N_P = M_P A_P N_P,
\]
where:
\begin{enumerate}[\rm (a)]
\item  $N_P = N$.
\item  $M_P A_P = L$.  
\item $A_P$ is the group of positive-definite matrices in the center of $L$.
\item $M_P$ may be characterized as the subgroup of $L$ generated  by the compact subgroups of $L$.  We shall call it the \emph{compactly generated part of $L$}.
\end{enumerate}
See for example \cite[Chap. VII \S7]{KnappBeyond}. We  find that every tempered irreducible representation of $L$ is a product  $\sigma\otimes \varphi$ of a tempered irreducible representation $\sigma$ of $M_P$ with a unitary character $\varphi $ of $A_P$.
We are especially interested in the special case where
\[
\sigma\colon M_P \longrightarrow \mathrm{U}(H_\sigma)
\]
is an irreducible \emph{square-integrable} unitary Hilbert space representation of $M_P$. It is not actually necessary to specialize to square integrable representations for the results of this section, but we shall do so anyway, to fix ideas and notation for the next two sections.   The representations obtained from such $\sigma$ by extending $\sigma\otimes\varphi$ to $P$ trivially across $N$ then inducing to $G$: \[\Ind_P^G (\sigma \otimes \varphi)\] are the (unitary) \emph{principal series} representations of $G$.

\begin{definition}
Denote by 
\[
\H_\sigma = C_0(\widehat A, H_\sigma)
\]
 the Hilbert $C_0(\widehat A)$-module of continuous functions, vanishing at infinity, from the Pontrjagin dual $\widehat A$ into the Hilbert space $H_\sigma$.  The $C_0(\widehat A)$-module action is by pointwise multiplication, and the $C_0(\widehat A)$-valued inner product is the pointwise inner product of Hilbert space-valued functions on $\widehat A$.
\end{definition}

The $C^*$-algebra $C^*_r (L)$ acts on $\H_\sigma$ through the family of representations $\sigma \otimes \varphi$  of $L= MA$ on the Hilbert space $H_\sigma$.  Thus   if $f\in C^*_r (L)$ and $h\in  C_0(\widehat A, H_\sigma)$, then 
\[
(f \cdot h) (\varphi) = (\sigma \otimes \varphi) (f) h(\varphi)
\]
for all $   \varphi \in \widehat A$.

\begin{lemma}
\label{lem-cpt-action1}
The $C^*$-algebra $C^*_r (L)$ acts by compact operators on the Hilbert module $\H_\sigma$.
\end{lemma}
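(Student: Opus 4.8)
The plan is to reduce the statement to a concrete, finite-rank approximation on the nose, exploiting the fact that $\H_\sigma = C_0(\widehat{A}, H_\sigma)$ is a continuous field of Hilbert spaces over the \emph{compact-plus-noncompact} parameter space $\widehat{A}$, and that the action of $C^*_r(L) = C^*_r(M) \otimes C^*_r(A)$ on it is through the field of representations $\sigma \otimes \varphi$. First I would recall that $\sigma$ is a square-integrable (discrete series) representation of $M$, so by the Schur orthogonality relations for square-integrable representations, the operator $\sigma(f)$ is a \emph{rank-one} (indeed, an orthogonal projection onto the span of $\sigma$-matrix coefficients, up to normalization) for a suitable $f \in C_c^\infty(M)$; more usefully, $\sigma$ extends to a representation of $C^*_r(M)$ whose image is precisely the algebra $\mathfrak{K}(H_\sigma)$ of \emph{all} compact operators on $H_\sigma$ (this is the content of Theorem~\ref{thm-cpt-action1} applied to $M$, since $\sigma$ is irreducible). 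So the image of $C^*_r(M)$ under $\sigma$ is $\mathfrak{K}(H_\sigma)$.

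Next I would handle the abelian factor. Since $A$ is a vector group, $C^*_r(A) \cong C_0(\widehat{A})$ via the Fourier transform, and its action on $\H_\sigma = C_0(\widehat{A}, H_\sigma)$ is by pointwise multiplication by the corresponding $C_0$-function. Therefore the image of $C^*_r(L) \cong C^*_r(M) \otimes C^*_r(A)$ inside $\mathfrak{B}(\H_\sigma)$ is contained in $C_0(\widehat{A}, \mathfrak{K}(H_\sigma))$, acting on $C_0(\widehat{A}, H_\sigma)$ in the obvious fiberwise way. The key point is then the elementary fact that $C_0(X, \mathfrak{K}(H))$ acts by \emph{compact Hilbert-module operators} on $C_0(X, H)$: an elementary tensor $g \otimes (v_2 \otimes v_1^*)$ with $g \in C_0(X)$, $v_1, v_2 \in H$, acts exactly as the Hilbert-module elementary operator $\xi_2 \otimes \xi_1^*$ where $\xi_i \in C_0(X, H)$ are chosen so that $\xi_1(x) = \overline{g(x)}^{1/2}\, v_1$ and $\xi_2(x) = g(x)^{1/2}\, v_2$ (or, more cleanly, write $g = g' \overline{g''}$ with $g', g'' \in C_0(X)$ and set $\xi_2 = g' v_2$, $\xi_1 = g'' v_1$); since such elementary tensors span a dense subalgebra of $C_0(X, \mathfrak{K}(H))$, and the compact operators $\mathfrak{K}(\H_\sigma)$ form a norm-closed subspace, every element of $C_0(X, \mathfrak{K}(H))$ acts through $\mathfrak{K}(\H_\sigma)$.

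Assembling: $C^*_r(L)$ maps into $C_0(\widehat{A}, \mathfrak{K}(H_\sigma))$, which acts on $\H_\sigma$ through $\mathfrak{K}(\H_\sigma)$, so $C^*_r(L)$ acts through compact operators, as claimed. I would organize the write-up so that the identification of the image of $C^*_r(M)$ with $\mathfrak{K}(H_\sigma)$ is cited (via Theorem~\ref{thm-cpt-action1} and irreducibility, which forces the image to be an irreducibly-acting $C^*$-algebra of compact operators, hence all of $\mathfrak{K}(H_\sigma)$), and the abelian-factor and $C_0(X,\mathfrak{K})$ facts are stated as short lemmas or inline computations.

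The main obstacle — really the only subtle point — is getting the topology right at the level of the Fourier transform: one must know that $f \mapsto (\varphi \mapsto (\sigma\otimes\varphi)(f))$ sends $C^*_r(L)$ \emph{into} $C_0(\widehat{A}, \mathfrak{K}(H_\sigma))$, i.e. that these operator-valued functions genuinely vanish at infinity on $\widehat{A}$. For $f \in C_c^\infty(L) = C_c^\infty(M \times A)$ this follows because $\widehat{A} \cong \mathfrak{a}^*$ and $\varphi \mapsto (\sigma\otimes\varphi)(f)$ is (after Fourier transform in the $A$-variable) a Schwartz-class $\mathfrak{K}(H_\sigma)$-valued function — in particular $C_0$; the general case follows by density since the inclusion is norm-continuous. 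One should also note that the claim is really about the \emph{reduced} $C^*$-algebra, and the relevant representations $\sigma \otimes \varphi$ are all tempered, so they are weakly contained in the regular representation of $L$ and the action does factor through $C^*_r(L)$ — but since we are handed the action on $\H_\sigma$ by $C^*_r(L)$ already in the statement, this is not something that needs to be re-proved here.
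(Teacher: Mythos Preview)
Your proposal is correct and follows essentially the same route as the paper: factor $C^*_r(L)\cong C^*_r(M)\otimes C_0(\widehat A)$, observe that $C^*_r(M)$ acts on $H_\sigma$ through $\Compact(H_\sigma)$ while $C_0(\widehat A)$ acts by pointwise multiplication, and conclude that the action lands in $C_0(\widehat A,\Compact(H_\sigma))=\Compact(\H_\sigma)$. Your write-up is more detailed than the paper's (which simply asserts the last identification), but the argument is the same.
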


\begin{proof}
The reduced $C^*$-algebra of $L=MA$ has the form
\[
C^*_r (L) \cong C^*_r (M) \otimes C^*_r (A) \cong C^*_r (M) \otimes C_0 (\widehat A) ,
\]
where the first isomorphism uses the product structure of $L$ and the second uses the  Fourier transform for the abelian group $A$.  The first factor in $C^*_r (M) \otimes C_0 (\widehat A)$ acts on $C_0(\widehat A, H_\sigma)$ through compact operators on $H_\sigma$, and the second factor acts through pointwise multiplications.  So the reduced $C^*$-algebra of $L$ acts through the $C^*$-algebra $C_0(\widehat A, \Compact (H_\sigma))$.  This is the $C^*$-algebra of compact operators on $\H_\sigma = C_0(\widehat A, H_\sigma)$.
\end{proof}

\begin{remark}
In fact the action map is a surjective homomorphism from $C^*_r (L)$ onto $\Compact (\H_\sigma)$. \end{remark}

\begin{corollary}
\label{cor-compact-act}
The $C^*$-algebra $C^*_r (G)$ acts as compact operators on the Hilbert module $\Ind _P^G \H_\sigma$. 
\qed
\end{corollary}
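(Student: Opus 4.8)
The plan is to derive this immediately from the two results already in hand. By Lemma~\ref{lem-cpt-action1}, the $C^*$-algebra $C^*_r(L)$ acts by compact operators on the Hilbert module $\H_\sigma$; that is, $\H_\sigma$ is a Hilbert module over $C_0(\widehat A)$ on which $C^*_r(L)$ acts through the homomorphism $C^*_r(L)\to\Compact(\H_\sigma)$. This is exactly the hypothesis of Corollary~\ref{cor-cpt-action1}, with the abstract Hilbert module $\H$ of that corollary taken to be $\H_\sigma$ and the coefficient algebra $C$ taken to be $C_0(\widehat A)$. Applying Corollary~\ref{cor-cpt-action1} then yields that $C^*_r(G)$ acts by compact operators on $\Ind_P^G\H_\sigma = C^*_r(G/N)\otimes_{C^*_r(L)}\H_\sigma$, which is precisely the assertion.

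In more detail, one would proceed in two steps. First, record that $\H_\sigma$ carries a $\bigl(C^*_r(L),C_0(\widehat A)\bigr)$-correspondence structure — the right $C_0(\widehat A)$-module structure and inner product are built into the definition of $\H_\sigma = C_0(\widehat A, H_\sigma)$, and the left action of $C^*_r(L)$ by adjointable operators is the one described before Lemma~\ref{lem-cpt-action1}, via the representations $\sigma\otimes\varphi$; Lemma~\ref{lem-cpt-action1} upgrades this to an action by \emph{compact} operators. Second, invoke Corollary~\ref{cor-cpt-action1}, whose proof rests on Proposition~\ref{prop-compact-action} (that $C^*_r(G)$ acts on $C^*_r(G/N)$ by compact operators, in the Hilbert module sense) together with Lemma~\ref{lem-cpt-tensor-one} (that a compact operator on one factor of an interior tensor product, tensored against a compactly-acting coefficient algebra, remains compact). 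Chaining these gives the compactness of the $C^*_r(G)$-action on $\Ind_P^G\H_\sigma$.

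There is essentially no obstacle: the corollary is a one-line consequence of earlier results, which is presumably why it is stated with a \qed and no accompanying argument. The only point that merits a word is a bookkeeping one — checking that the abstract framework of Section~\ref{sec-hilb-mod} and Corollary~\ref{cor-cpt-action1}, stated for a general correspondence $\H$ from $C^*_r(L)$ to an arbitrary $C^*$-algebra $C$, does apply with $C = C_0(\widehat A)$ and $\H = \H_\sigma$. This is immediate once one notes that Lemma~\ref{lem-cpt-action1} provides exactly the hypothesis ``$C^*_r(L)$ acts through compact operators on $\H$'' that Corollary~\ref{cor-cpt-action1} requires.
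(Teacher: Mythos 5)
Your proposal is correct and matches the paper's intent exactly: the corollary is stated with \qed\ precisely because it follows immediately by combining Lemma~\ref{lem-cpt-action1} with Corollary~\ref{cor-cpt-action1}, which is the chain of implications you spell out. There is nothing to add.
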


\section{Decomposition of the Reduced C*-Algebra}
\label{sec-decomp}

In this section we shall use  the analysis of the tempered dual, carried out mostly by Harish-Chandra and Langlands, to decompose the reduced group $C^*$-algebra into a direct sum of component $C^*$-algebras.   The decomposition is   well known, but not especially well documented.  In any case, the arguments are quite simple and fit well into the Hilbert module context.
 
\begin{definition}
\label{def-component-alg}
Let $P$ be a parabolic subgroup of $G$, and let $\sigma$ be an irreducible, square-integrable representation of the compactly generated part of the Levi factor of $P$.
Denote by 
\[
C^*_r (G) _{P,\sigma}  \subseteq \Compact (\Ind _P^G \H_\sigma)
\]
the image of the $C^*$-algebra $C^*_r (G)$ under its action as compact operators  on the Hilbert module $\Ind_P^G \H_\sigma$.  See Corollary~\ref{cor-compact-act}.
\end{definition}

Our aim is to show that the natural quotient homomorphisms from $C^*_r (G)$ to the component algebras $C^*_r(G)_{P,\sigma}$ determine an isomorphism of $C^*$-algebras
\[
\Csr(G)\stackrel \cong \longrightarrow \bigoplus _{[P,\sigma]} C^*_r(G)_{P,\sigma} ,
\]
where the sum is over representatives of suitable equivalence classes of para\-bolic subgroups $P$ and irreducible square-integrable representations $\sigma$.

First, we shall describe the equivalence relation used above on pairs $(P,\sigma)$.  

\begin{definition} 
\label{def-associate-class}
Two pairs $(P_1,\sigma_1)$ and $(P_2,\sigma_2)$, each consisting of a (standard) parabolic subgroup and an irreducible, square-integrable representation of the compactly generated part   of the Levi factor of the parabolic, are \emph{associate} if there is an element of $G$ that conjugates the Levi factor of $P_1$ to the Levi factor of $P_2$, and conjugates $\sigma_1$ to a representation unitarily equivalent to $\sigma_2$.
\end{definition}

The relevance of this concept of equivalence stems from the following result, which is the first of several substantial theorems in representation theory that we shall merely quote.

\begin{theorem}\textup{(See \cite[Section 11]{HC71}.)}
\label{thm-associate-reps}
If $(P_1,\sigma_1)$ and $(P_2,\sigma_2)$ are associate, then every parabolically induced representation $\Ind^G _{P_1} (\sigma_1 \otimes \varphi_1)$  is unitarily equivalent to some parabolically induced representation $\Ind^G _{P_2} (\sigma_2 \otimes \varphi_2)$.
\end{theorem}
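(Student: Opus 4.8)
The plan is to prove Theorem~\ref{thm-associate-reps} by exhibiting an explicit unitary intertwiner built from the action of $G$ on induced representations, using the conjugating element supplied by the hypothesis. Suppose $g \in G$ conjugates the Levi factor $L_1$ of $P_1$ to the Levi factor $L_2$ of $P_2$, and carries $\sigma_1$ to a representation unitarily equivalent to $\sigma_2$. The first step is to reduce to the case where $g$ also carries the parabolic $P_1$ to a parabolic with Levi factor $L_2$; this auxiliary parabolic $P' = g P_1 g^{-1}$ may differ from $P_2$ (they share a Levi factor but the unipotent radicals may be opposite or otherwise distinct), so the argument naturally splits into two pieces: (i) moving between $P_1$ and $P'$ via the global action of $g$, and (ii) moving between $P'$ and $P_2$ --- two parabolics with the \emph{same} Levi factor.

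For piece (i), conjugation by $g$ gives an isomorphism $P_1 \to P'$ intertwining $\sigma_1 \otimes \varphi_1$ with its $g$-conjugate on $P'$, and a standard computation with the induced-representation model (translating functions on $G/P_1$ to functions on $G/P'$ by $f \mapsto f(g^{-1} \cdot)$, with the appropriate $\delta^{1/2}$-twist to preserve unitarity) shows that $\Ind_{P_1}^G(\sigma_1 \otimes \varphi_1)$ is unitarily equivalent to $\Ind_{P'}^G(\sigma_1^g \otimes \varphi_1^g)$. Since $g$ carries $\sigma_1$ to a representation equivalent to $\sigma_2$, we may rewrite the inducing data as $\sigma_2 \otimes \varphi'$ for a suitable unitary character $\varphi'$ of the center $A_{P'}$ (identified with $A_{P_2}$ under $g$). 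For piece (ii) --- equivalence of $\Ind_{P'}^G$ and $\Ind_{P_2}^G$ for the \emph{same} tempered data on a common Levi factor $L_2 = M_2 A_2$ --- this is precisely the statement that standard intertwining operators between induced representations attached to associate parabolics with the same Levi are unitary isomorphisms for tempered (in particular square-integrable twisted by unitary character) inducing data; this is one of the central results of Harish-Chandra's Plancherel theory, and one simply invokes it (the reference \cite{HC71} covers exactly this).

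The main obstacle is piece (ii): the unitarity of the intertwining operators. Unlike piece (i), which is elementary group-theoretic bookkeeping with a change-of-variables formula, the fact that the intertwining integral $\int_{\overline{N'} \cap N_2} f(g'n g) \, dn$ (suitably normalized by Harish-Chandra's $c$-functions or $\mu$-functions) defines a \emph{unitary} --- not merely bounded invertible --- operator on the tempered induced representation is a deep analytic fact. The honest move here is not to reprove it but to cite it cleanly: the statement as phrased in the theorem is a quotation, so the ``proof'' is really a reduction of the general associate case to the already-known same-Levi case plus the elementary global-conjugation step.

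I would therefore structure the write-up as: (1) recall the two models and fix the measure/$\delta^{1/2}$ normalizations so that induced representations are genuinely unitary; (2) prove the global-conjugation lemma giving $\Ind_{P_1}^G(\sigma_1 \otimes \varphi_1) \cong \Ind_{g P_1 g^{-1}}^G((\sigma_1 \otimes \varphi_1)^g)$, which is a one-line change of variables once the measures are set up; (3) identify the $g$-conjugate data with $\sigma_2 \otimes \varphi_2$ on the Levi factor of $g P_1 g^{-1} = L_2$; (4) invoke Harish-Chandra's theorem on unitarity of intertwining operators between associate parabolics sharing a Levi factor to pass from $g P_1 g^{-1}$ to $P_2$; (5) compose the unitaries. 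The delicate point to flag explicitly is that $\varphi_2$ is \emph{some} unitary character depending on $\varphi_1$ and $g$, not $\varphi_1$ itself --- the theorem only asserts existence of a matching $\varphi_2$, which is all that steps (2)--(4) produce.
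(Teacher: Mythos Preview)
Your approach is correct but takes a different route from the paper. The paper does not prove this theorem at all; it cites \cite[Section 11]{HC71} and, in the Remark immediately following, explains that the argument there proceeds by computing that the \emph{characters} of $\Ind_{P_1}^G(\sigma_1\otimes\varphi_1)$ and $\Ind_{P_2}^G(\sigma_2\otimes\varphi_2)$ agree (the induced-character formula depends only on the Levi factor and the inducing data, not on the choice of parabolic with that Levi), and then invokes the fact that unitary representations with equal distributional characters are equivalent. Your argument instead constructs an explicit unitary intertwiner via a global-conjugation step followed by a Knapp--Stein normalized intertwining operator between parabolics sharing a Levi. This is sound, but it is precisely the ``more difficult result'' that the paper's Remark defers to the next section (Theorem~\ref{thm-intertwiner-existence}, attributed there to Knapp--Stein \cite{KS2} rather than to \cite{HC71}). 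So your proof actually imports the heavier analytic machinery that the paper is deliberately postponing: the character argument needs only the induced-character formula and linear independence of characters, whereas your route needs the analytic continuation and unitarity of the standard intertwining integrals. What your approach buys is an explicit isomorphism rather than a mere existence statement; what it costs is that, as organized in the paper, you would be invoking Section~\ref{sec-group-alg-structure} results to prove a Section~\ref{sec-decomp} lemma. One small correction: the reference \cite{HC71} does not ``cover exactly'' the unitarity of intertwining operators as you claim in your piece (ii); that is Knapp--Stein, and you should cite \cite{KS2} there instead.
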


\begin{remark}
In fact it is also true that the    representations of $G$ on the Hilbert  modules $\Ind _{P_1}^G \H_{\sigma_2}$ and $\Ind^G_{P_2}\H_{\sigma_2}$ are unitarily equivalent, as we shall note in the next section, but this is a more difficult result.  Theorem~\ref{thm-associate-reps} is proved by computing that the characters of  $\Ind_{P_1}^G    (\sigma_1\otimes \varphi_1)$ and $\Ind_{P_2}^G (\sigma_2\otimes\varphi_2)$ are equal, but this technique does not apply in the Hilbert module case. \end{remark}

Next, we shall need the following theorem of Harish-Chandra \cite[Section 36]{MR0219666} (see 
  also \cite[Section 7.7]{MR929683} for an exposition) that is the counterpart, for tempered representations of real reductive groups, of Bernstein's uniform admissibility theorem   \cite[Section 1.4]{BernsteinPoly}.  Hence the title we shall give it here.

\begin{theorem}\textup{(Uniform admissibility.)}
\label{thm-unif-adm}
Let $G$ be a reductive group with maximal compact subgroup $K$.    Let $\tau$ be an irreducible representation of $K$.  
There are at most finitely many equivalence classes of irreducible, square-integrable representations of the compactly generated part of $G$ whose restrictions to $K$ contain $\tau$ as a subrepresentation.
\end{theorem}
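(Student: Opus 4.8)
The plan is to deduce Theorem~\ref{thm-unif-adm} from the corresponding statement for the \emph{discrete series} (square-integrable representations modulo the center), which is the genuinely hard analytic input of Harish-Chandra, and to treat separately the abelian split central factor. Write $M = M_G$ for the compactly generated part of $G$, so by the Langlands-type decomposition $G = (M \times A)\ltimes N$ with $N$ trivial here (since $G$ itself is not a proper parabolic, but the decomposition of $G$ as $M\cdot A_G$ with $A_G$ the positive-definite central subgroup still applies). An irreducible square-integrable representation of $M$ restricts to $M\cap K$, and since every maximal compact subgroup of $M$ is of the form $M\cap K$ for a suitable $K$, and all such are conjugate, it costs nothing to phrase everything in terms of $K$. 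So I would first reduce to: there are finitely many discrete series representations of $M$ whose restriction to $K$ (equivalently, to $M\cap K$) contains a fixed irreducible $\tau$.

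The main step is then to invoke Harish-Chandra's classification of the discrete series: a reductive group $M$ has a (nonempty) discrete series only if it has a compact Cartan subgroup $T\subseteq M\cap K$, and in that case the discrete series is parametrized by regular characters of $T$ (Harish-Chandra parameters) modulo the Weyl group $W(M,T)$, with the infinitesimal character of a discrete series representation determined by its parameter. The key finiteness mechanism is this: if $\pi$ is a discrete series representation with Harish-Chandra parameter $\lambda$, then the minimal $K$-type of $\pi$ (Blattner's formula, proved in this generality by Hecht--Schmid) is $\lambda + \rho_n - \rho_c$ or a similarly explicit translate, and in particular the highest weight of any $K$-type occurring in $\pi$ differs from $\lambda$ by a lattice vector in an explicit cone. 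Conversely the Casimir/infinitesimal character forces $\|\lambda\|$ to be controlled by $\|\mu\|$ for any $K$-type $\mu$ of $\pi$. Hence if $\tau$ occurs in $\pi$, then $\lambda$ lies in a ball of radius depending only on $\tau$, intersected with a fixed lattice (the character lattice of $T$), which is a finite set; passing to $W(M,T)$-orbits only shrinks it further.

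Concretely I would run the argument as follows. First, if $M$ has no compact Cartan subgroup, there are no discrete series at all and the statement is vacuous. Otherwise fix a compact Cartan $T$; restricting further to $T$, any $K$-type $\tau$ contributes finitely many $T$-weights $\mu$, each of bounded norm. For a discrete series $\pi$ of parameter $\lambda$, the lowest $K$-type theorem identifies a distinguished weight built from $\lambda$, and combined with the bound $\langle \lambda+\rho,\lambda+\rho\rangle = \langle \mu + \text{(bounded correction)}, \cdot\rangle$ coming from the infinitesimal character acting on the $K$-type $\tau$, one gets $\|\lambda\| \le C(\tau)$. The set of regular integral $\lambda \in \widehat{T}$ with $\|\lambda\|\le C(\tau)$ is finite; dividing by the finite group $W(M,T)$ leaves finitely many Harish-Chandra parameters, hence finitely many discrete series representations containing $\tau$. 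Finally, observe $A_G$ acts by a unitary character in each such representation and does not affect the restriction to $K$, so the count for square-integrable-modulo-center representations of $M$ is literally the count for the discrete series of $M/A_G$ (a semisimple group with finite center), and we are done.

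The hard part will be justifying the norm bound $\|\lambda\|\le C(\tau)$ without simply re-proving Hecht--Schmid: the honest route is to cite the Blattner formula (or at least the existence and explicit form of the minimal $K$-type of a discrete series representation, due to Harish-Chandra for the minimal $K$-type multiplicity-one statement and to Hecht--Schmid for the full Blattner conjecture) together with the elementary fact that the Casimir eigenvalue on $\pi$ is $\|\lambda\|^2 - \|\rho\|^2$, while the Casimir of $\mathfrak{k}$ acting on $\tau$ has eigenvalue $\|\mu+\rho_c\|^2 - \|\rho_c\|^2$, and these are related by the (positive-definite) quadratic form coming from $\mathfrak{p}$; I would be content to quote this rather than prove it, since the theorem is explicitly flagged in the excerpt as one we ``merely quote.'' Indeed, given that stance, the cleanest exposition may be simply to attribute the discrete-series finiteness to Harish-Chandra's work as summarized in \cite{MR929683} and note that the general-$M$ case follows by the central-character splitting above; the sketch in the previous paragraph then serves to indicate \emph{why} it is true rather than to supply a self-contained argument.
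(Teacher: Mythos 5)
The paper does not prove this theorem: it is stated as a known result of Harish-Chandra, with a pointer to \cite[Section 36]{MR0219666} and to \cite[Section 7.7]{MR929683} for an exposition, so there is no internal argument to compare against. You recognize this in your last paragraph, and your sketch of the underlying finiteness mechanism---parametrize the discrete series by regular Harish-Chandra parameters $\lambda$ on a compact Cartan $T$ modulo the compact Weyl group, bound $\|\lambda\|$ in terms of $\tau$, and count lattice points in a ball---is correct in outline. Two points of precision. The Blattner formula and lowest-$K$-type machinery are more than the finiteness claim needs: the elementary Casimir comparison already does it, since writing $\Omega_{\mathfrak g} = \Omega_{\mathfrak k} + \Omega_{\mathfrak p}$ with $\Omega_{\mathfrak p} = \sum X_i^2 \le 0$ (each $X_i \in \mathfrak p$ acting skew-adjointly) in any unitary representation forces $\|\lambda\|^2 - \|\rho\|^2 \le \|\mu+\rho_c\|^2 - \|\rho_c\|^2$ whenever the $K$-type of highest weight $\mu$ occurs, and this together with the integrality condition on $\lambda$ yields finiteness with no input from Hecht--Schmid. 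Also, the statement is already about the compactly generated part $M$ of $G$, which has compact center, so its irreducible square-integrable representations are automatically its discrete series and the preliminary reduction by the split central factor is not needed; and the quotient in Harish-Chandra's parametrization is by the compact Weyl group $W(K,T)$ rather than the full $W(M,T)$. Neither point affects the soundness of your sketch.
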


\begin{corollary} 
\label{cor-unif-adm}
 Let $L=MA$ be a Levi subgroup of $G$ and form the direct sum Hilbert space representation $\oplus _{[\sigma]} H_\sigma$ of $M$, indexed by a set of representatives of the unitary equivalence classes of the irreducible, square-integrable representations of $M$. The action of $C^*_r(M)$ on   $\oplus_{[\sigma]}  H_\sigma$ is through compact operators.
 \end{corollary}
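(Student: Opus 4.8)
The plan is to reduce the statement to a finiteness fact about $K$-types plus the elementary observation that $C^*_r(M)$ acts on each individual $H_\sigma$ through compact operators. The latter is immediate from Theorem~\ref{thm-cpt-action1}: an irreducible, square-integrable representation is in particular an irreducible unitary representation of $M$ (it is a discrete series representation, hence a subrepresentation of the regular representation), so $C^*_r(M)$—indeed $C^*(M)$—acts on $H_\sigma$ through $\Compact(H_\sigma)$. The work is therefore entirely in controlling how these summands interact as $\sigma$ ranges over the (infinitely many) equivalence classes of square-integrable representations.

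First I would decompose according to $K\cap M$. Restricting to $K\cap M$, each $H_\sigma$ breaks into $K\cap M$-isotypic pieces, and for a fixed irreducible representation $\tau$ of $K\cap M$, Theorem~\ref{thm-unif-adm} (uniform admissibility) guarantees that only finitely many classes $[\sigma]$ have $\tau$ occurring in $H_\sigma|_{K\cap M}$. Thus, fixing $\tau$ and letting $E_\tau$ denote the $\tau$-isotypic projection, the space $\bigoplus_{[\sigma]} E_\tau H_\sigma$ is a \emph{finite} direct sum of Hilbert spaces. On this finite sum, $C^*_r(M)$ acts through compact operators, being a finite sum of the compact-operator actions on the individual $E_\tau H_\sigma$ (here one uses that the $K\cap M$-isotypic projections commute with the $M$-action up to the usual decomposition, so the compactness from Theorem~\ref{thm-cpt-action1} restricts to each isotypic block).

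Second I would assemble these blocks over all $\tau\in\widehat{K\cap M}$. We have an orthogonal decomposition $\bigoplus_{[\sigma]} H_\sigma = \bigoplus_{\tau} \bigl(\bigoplus_{[\sigma]} E_\tau H_\sigma\bigr)$, an (infinite) direct sum over $\tau$ of the finite-dimensional-in-$\sigma$ blocks just analyzed. To conclude that $C^*_r(M)$ acts by compact operators on the whole thing, it suffices to check the defining approximation property: given $f$ in (a dense subalgebra of) $C^*_r(M)$ and $\varepsilon>0$, the operator $f$ acts on all but finitely many of the $\tau$-blocks with norm $<\varepsilon$. This is exactly the content of the statement that $C^*_r(M)$ acts by compact operators in the regular representation of $M$ (or rather in $\bigoplus_\sigma H_\sigma$, which is weakly contained in it): the image of $f$ under the discrete part of the Plancherel decomposition lies in $\Compact$, and compactness of a block-diagonal operator is equivalent to the off-diagonal tail norms going to zero. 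I would phrase this cleanly by noting that $\bigoplus_{[\sigma]} H_\sigma$ is (unitarily equivalent to a subrepresentation of) the discrete part of $L^2(M)$, on which $C^*_r(M)$ visibly acts by compacts since the discrete part of $\widehat M$ is a discrete subset of the spectrum and each point carries a compact action by Theorem~\ref{thm-cpt-action1}.

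The main obstacle is the passage from "compact on each $\tau$-block" to "compact on the direct sum"—the point where finitely many $\tau$-blocks must absorb almost all of the operator norm of a given element of $C^*_r(M)$. This does not follow formally from compactness on the blocks (an infinite direct sum of compact operators need not be compact), and is precisely where uniform admissibility must be used a second time, or where one must invoke that $C^*_r(M)$ is norm-dense in its image in $\prod_\tau \Compact(\cdots)$ together with the fact that, for $f\in C^\infty_c(M)$, convolution by $f$ maps into the space of operators with the desired tail behaviour. I would isolate this as the crux of the argument and handle it via the Plancherel picture for the discrete series of $M$ rather than by a direct estimate.
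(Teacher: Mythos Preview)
Your ingredients are exactly right---uniform admissibility together with Theorem~\ref{thm-cpt-action1}---but the organization creates an obstacle that the paper's argument avoids. You decompose the \emph{Hilbert space} $\bigoplus_{[\sigma]} H_\sigma$ into $\tau$-blocks and then confront the genuine difficulty that compactness on each block does not formally imply compactness on the sum. Your proposed resolution via the Plancherel picture is problematic: the claim that $C^*_r(M)$ ``visibly acts by compacts'' on the discrete part of $L^2(M)$ because the discrete series form a discrete subset of the spectrum is essentially the statement to be proved. Knowing that the map $C^*_r(M)\to\prod_\sigma \Bounded(H_\sigma)$ lands in the $c_0$-direct sum $\bigoplus_\sigma \Compact(H_\sigma)$ is equivalent to the corollary itself, so invoking Plancherel here is circular in the context of this paper (which is building toward the structure theorem for $C^*_r(G)$).

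The paper instead decomposes the \emph{algebra}. For each isotypical projection $p=p_\tau\in C^*_r(K\cap M)\subset M(C^*_r(M))$, the subspace $C^*_r(M)p$ consists of elements that act on $H_\sigma$ through the $\tau$-isotypic component; by uniform admissibility this is zero for all but finitely many $[\sigma]$, so every element of $C^*_r(M)p$ acts as a compact operator on $\bigoplus_{[\sigma]} H_\sigma$ by Theorem~\ref{thm-cpt-action1} and finiteness. Since $\bigcup_\tau C^*_r(M)p_\tau$ spans a dense subspace of $C^*_r(M)$ and the compact operators form a closed subalgebra, the result follows. This single use of uniform admissibility on the algebra side eliminates the tail problem entirely; no second appeal to it, and no Plancherel input, is needed.
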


\begin{proof}
As $p$ ranges over the isotypical projections associated to irreducible representations of $K\cap M$ (a maximal compact subgroup of $M$)  the subspaces $C^*_r(M) p $ span a dense subspace of $C^*_r (M)$. But according to Theorem~\ref{thm-unif-adm} the elements of $C^*_r (M)p$ act  as the zero operator in all but finitely many of the Hilbert spaces $H_\sigma$.  The result follows from this and from Theorem \ref{thm-cpt-action1}.
\end{proof}

 \begin{corollary}
Form the direct sum Hilbert $C_0(\widehat{A})$-module
 \[
   \bigoplus _{[P,\sigma]}  \Ind_P^G \H_\sigma 
   \]
   indexed by a set of representatives of the associate classes of pairs $(P,\sigma)$.
   The action of $C^*_r (G)$ on this Hilbert module is through compact operators.
   \end{corollary}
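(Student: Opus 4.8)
The plan is to deduce the corollary from Corollary~\ref{cor-unif-adm} using the tensor-product machinery of Sections~\ref{sec-hilb-mod} and~\ref{sec-parabolic-induction}, with essentially no new representation-theoretic input. Since there are only finitely many associate classes of standard parabolic subgroups, I would first reduce to the following statement: for a single standard parabolic $P$, with Langlands decomposition $P = M_PA_PN$ and Levi factor $L = M_PA_P$, the algebra $C^*_r(G)$ acts through compact operators on $\bigoplus_{[\sigma]}\Ind_P^G\H_\sigma$, where now $[\sigma]$ ranges over \emph{all} equivalence classes of irreducible square-integrable representations of $M_P$. Granting this for each representative parabolic, $C^*_r(G)$ acts through compact operators on the finite direct sum, over the associate classes $[P]$, of these modules. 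Since each $\Ind_P^G\H_\sigma$ is invariant under $C^*_r(G)$, the action on any direct summand of such a finite direct sum is a compression of a compact operator and hence compact; and, after choosing representatives suitably, the module appearing in the corollary is exactly such a direct summand.

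The core step is to show that $C^*_r(L)$ acts through compact operators on $\bigoplus_{[\sigma]}\H_\sigma$, and this runs exactly parallel to the proof of Lemma~\ref{lem-cpt-action1}, with Corollary~\ref{cor-unif-adm} taking the place of Harish-Chandra's Theorem~\ref{thm-cpt-action1}. Using $C^*_r(L)\cong C^*_r(M_P)\otimes C_0(\widehat{A_P})$ and the identification
\[
\bigoplus_{[\sigma]} \H_\sigma \;=\; \bigoplus_{[\sigma]} C_0\bigl(\widehat{A_P}, H_\sigma\bigr) \;=\; C_0\Bigl(\widehat{A_P},\; \bigoplus_{[\sigma]} H_\sigma\Bigr),
\]
the first tensor factor acts through the compact operators on $\bigoplus_{[\sigma]}H_\sigma$ provided by Corollary~\ref{cor-unif-adm} and the second acts by pointwise multiplication, so $C^*_r(L)$ acts through $C_0\bigl(\widehat{A_P},\Compact(\bigoplus_{[\sigma]}H_\sigma)\bigr)=\Compact\bigl(\bigoplus_{[\sigma]}\H_\sigma\bigr)$. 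With this in hand, Corollary~\ref{cor-cpt-action1} finishes the reduced statement, because the interior tensor product commutes with $c_0$-direct sums:
\[
\Ind_P^G\Bigl(\bigoplus_{[\sigma]}\H_\sigma\Bigr) \;=\; C^*_r(G/N)\otimes_{C^*_r(L)}\Bigl(\bigoplus_{[\sigma]}\H_\sigma\Bigr) \;=\; \bigoplus_{[\sigma]}\Ind_P^G\H_\sigma .
\]

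I do not anticipate a genuine obstacle here: once the module identifications are set up, the argument is formal. The steps needing the most care are those two identifications --- the commutation of the interior tensor product with infinite $c_0$-direct sums, and in particular $\bigoplus_{[\sigma]}C_0(\widehat{A_P},H_\sigma)=C_0(\widehat{A_P},\bigoplus_{[\sigma]}H_\sigma)$, which requires a short compactness argument on the locally compact space $\widehat{A_P}$ --- together with the bookkeeping that identifies, for a fixed representative parabolic $P$, the set of associate classes of pairs $(P,\sigma)$ with a set of orbit representatives for the action of a finite group on the square-integrable dual of $M_P$, so that the module in the corollary is literally a direct summand, rather than merely a closed subspace, of the larger module handled above.
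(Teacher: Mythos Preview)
Your proposal is correct and follows essentially the same route as the paper's own proof: fix a Levi factor, use Corollary~\ref{cor-unif-adm} (as in the proof of Lemma~\ref{lem-cpt-action1}) to see that $C^*_r(L)$ acts by compacts on $\bigoplus_{[\sigma]}\H_\sigma$, then apply Proposition~\ref{prop-compact-action} together with Lemma~\ref{lem-cpt-tensor-one} (packaged as Corollary~\ref{cor-cpt-action1}) and finish by noting there are only finitely many standard Levi subgroups. Your additional bookkeeping about direct-sum identifications and the passage to associate-class representatives is more explicit than what the paper records, but the argument is the same.
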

   
\begin{proof}
First,  fix a standard Levi subgroup, and consider only those parabolics $P$ with that Levi factor. 
As in the proof of  Lemma \ref{lem-cpt-action1}, it follows from Corollary~\ref{cor-unif-adm} that 
the action of $C^*_r (L)$ on the orthogonal direct sum Hilbert module 
\[
\bigoplus_{[\sigma]}  \H_\sigma = \bigoplus_{[\sigma]} C_0(\widehat A , H_\sigma)
\]
 is through compact operators.  It follows from 
  Proposition~\ref{prop-compact-action} and Lemma~\ref{lem-cpt-tensor-one} that $C^*_r (G)$ acts on 
  $\oplus_{[P,\sigma]}  \Ind_P^G \H_\sigma $ through compact operators.  
 The full result follows from the fact that there are only finitely many standard Levi subgroups.
\end{proof}

We obtain from the corollary a homomorphism of $C^*$-algebras
    \[
\Csr(G) \longrightarrow  \bigoplus _{[P,\sigma]} \Compact   (\Ind_P^G \H_\sigma   ),
\]
and so by definition a homomorphism
\begin{equation}
\label{eq-dir-sum-components1}
\Csr(G) \longrightarrow  \bigoplus _{[P,\sigma]} C^*_r (G)_{P,\sigma} .
\end{equation}

\begin{remark}\label{rem-dir-sum}
Of course the important point about (\ref{eq-dir-sum-components1}) is that the image lies within the $C^*$-algebraic direct sum, consisting of families of elements $a_\sigma \in C^*_r (G)_{P,\sigma}$ with $\lim_{\sigma\to \infty} \| a_\sigma \| =0$. See for instance \cite[p.6]{Lance}.
\end{remark}

Our next task is to compute the image of the homomorphism (\ref{eq-dir-sum-components1}).   For this we shall need the following important theorem of Langlands on the disjointness of principal series representations. References for this result are \cite[p. 142ff.]{MR1011897} and \cite{HC71}. See also \cite[Theorem 14.90]{Knapp1}.

\begin{theorem}\label{thm-Disjointness}\textup{(Disjointness)}
  If two principal series representations 
  \[
  \Ind _{P_1}^ G (\sigma_1 \otimes \varphi_1) \quad \text{and} \quad \Ind _{P_2} ^G (\sigma_2 \otimes \varphi_2)
  \]
   share an irreducible constituent, then there is an element of $G$ that conjugates the Levi factor of $P_1$ to the Levi factor of $P_2$, and conjugates $\sigma_1\otimes \varphi_1$ to a representation of $P_2$ that is unitarily equivalent to $\sigma_2 \otimes \varphi_2$.   
\end{theorem}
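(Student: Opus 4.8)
The statement to be proved is the Disjointness Theorem of Langlands, which is quoted from the literature rather than proved in this paper. So strictly speaking there is no proof to propose here — the authors explicitly cite \cite[p. 142ff.]{MR1011897}, \cite{HC71} and \cite[Theorem 14.90]{Knapp1}. Nonetheless, I can sketch how the argument goes, since understanding its shape is useful for what follows.

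\textbf{Plan via characters and asymptotics.} The natural approach is through Harish-Chandra's theory of characters of tempered representations. First I would recall that the principal series representation $\Ind_{P}^G(\sigma\otimes\varphi)$, while not always irreducible, has a well-defined distribution character $\Theta_{P,\sigma,\varphi}$, and that two tempered representations share an irreducible constituent only if their characters have overlapping support in a suitable sense — more precisely, the irreducible tempered characters form a basis (over $\mathbb{C}$) for the span of all tempered characters, so a common constituent forces a genuine linear relation among the $\Theta_{P_i,\sigma_i,\varphi_i}$ and the irreducible constituents. The key computational input is Harish-Chandra's formula for the restriction of $\Theta_{P,\sigma,\varphi}$ to the various Cartan subgroups of $G$, and in particular the behaviour of the character near the ``most split'' Cartan associated to $P$: the leading exponents of $\Theta_{P,\sigma,\varphi}$ along the split torus $A_P$ detect both the conjugacy class of the Levi $L = M_PA_P$ and, via the discrete-series character $\Theta_\sigma$ of $M_P$ appearing as a coefficient, the pair $(\sigma,\varphi)$ up to the action of the relevant Weyl group $W(G,A_P)$.

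\textbf{Key steps in order.} (1) Reduce to a statement about characters: if the two principal series share an irreducible constituent $\pi$, then $\Theta_\pi$ appears in both character expansions, so the supports of $\Theta_{P_1,\sigma_1,\varphi_1}$ and $\Theta_{P_2,\sigma_2,\varphi_2}$ meet a common Cartan subgroup in a common component. (2) Use Harish-Chandra's matching conditions / the ``constant term'' of the character along $A_{P_i}$ to conclude that the split parts $A_{P_1}$ and $A_{P_2}$ must be $G$-conjugate, hence the Levi factors $L_1$ and $L_2$ are $G$-conjugate; after conjugating, assume $P_1$ and $P_2$ have the same Levi $L = MA$. (3) With a common Levi, compare the discrete-series data: the coefficient of the leading exponential term in the character expansion along $A$ is (a sum over $W(G,A)$ of) $\Theta_{\sigma_i}(m)\varphi_i(a)$, and the linear independence of distinct discrete-series characters of $M$ (another Harish-Chandra theorem) together with linear independence of distinct unitary characters of $A$ forces $(\sigma_1,\varphi_1)$ to be $W(G,A)$-conjugate to $(\sigma_2,\varphi_2)$, which is exactly the asserted conjugacy in $G$.

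\textbf{Main obstacle.} The technical heart — and the reason the authors quote rather than prove this — is step (2)–(3): extracting the Levi data and the inducing data from the asymptotics of the global character. This requires Harish-Chandra's full apparatus (the differential equations satisfied by characters, the description of their singularities, the classification of Cartan subgroups, and the linear independence of discrete-series characters on $M$), which is precisely the deep input from tempered representation theory that this paper deliberately treats as a black box. Within the framework of the present paper, then, the ``proof'' is simply the citation; the role of the theorem here is downstream, as the tool that will show the homomorphism \eqref{eq-dir-sum-components1} has image exactly the $C^*$-algebraic direct sum, by guaranteeing that inequivalent associate classes $[P,\sigma]$ give representations of $C^*_r(G)$ with no common irreducible subquotient, hence orthogonal central supports.
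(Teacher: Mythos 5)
You correctly recognize that the paper quotes the Langlands Disjointness Theorem without proof, citing Langlands, Harish-Chandra, and Knapp, and your character-theoretic sketch (leading exponents along the split torus detect the Levi, linear independence of discrete-series characters and unitary characters of $A$ detect the inducing data up to the Weyl group) accurately reflects the structure of the argument in those references. Since the paper's own treatment is just the citation, there is nothing to compare beyond noting that your summary of the black-box input is sound.
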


We shall also need to apply some elementary facts from $C^*$-algebra representation theory to (\ref{eq-dir-sum-components1}).

\begin{lemma}
\label{lem-easy-cstar1}
The irreducible representations of the $C^*$-algebra $C^*(G)_{P,\sigma}$, viewed as irreducible representations of $G$ through the quotient mapping 
\[
C^*_r (G) \longrightarrow C^*_r (G)_{P,\sigma},
\]
 are precisely the irreducible constituents of the principal series representations $\Ind_P^G  ( \sigma\otimes \varphi) $, as $\varphi$ ranges over $\widehat A$.
\end{lemma}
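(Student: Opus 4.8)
The plan is to use the general correspondence between ideals in a $C^*$-algebra and open subsets of its spectrum, together with Proposition~\ref{prop-Cups-induce}, which identifies $\Ind_P^G\H_\sigma\otimes_{C_0(\widehat A)}\mathbb{C}_\varphi$ with the parabolically induced Hilbert space $\Ind_P^G(\sigma\otimes\varphi)$. The component algebra $C^*_r(G)_{P,\sigma}$ is by definition the image of $C^*_r(G)$ in $\Compact(\Ind_P^G\H_\sigma)$, and since $C_0(\widehat A)$ acts centrally on $\Ind_P^G\H_\sigma$ (by pointwise multiplication, inherited from the $C_0(\widehat A)$-module structure on $\H_\sigma$), the spectrum of $C^*_r(G)_{P,\sigma}$ fibres over $\widehat A$. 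First I would show that for each fixed $\varphi\in\widehat A$, evaluation at $\varphi$ gives a surjection $\Compact(\Ind_P^G\H_\sigma)\to\Compact(\Ind_P^G\H_\sigma\otimes_{C_0(\widehat A)}\mathbb{C}_\varphi)=\Compact(\Ind_P^G(\sigma\otimes\varphi))$, and that composing with the quotient $C^*_r(G)\to C^*_r(G)_{P,\sigma}$ recovers the action of $C^*_r(G)$ on $\Ind_P^G(\sigma\otimes\varphi)$; this is essentially the content of Proposition~\ref{prop-Cups-induce}.

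Next I would argue that every irreducible representation of $C^*_r(G)_{P,\sigma}$ arises this way. Since $C^*_r(G)_{P,\sigma}$ is a $C_0(\widehat A)$-algebra acting on the Hilbert module $\Ind_P^G\H_\sigma$ through compact operators, an irreducible representation of it must factor through evaluation at some point $\varphi\in\widehat A$ — more precisely, the kernel of the central action of some maximal ideal of $C_0(\widehat A)$, corresponding to a point $\varphi$, must be contained in the kernel of the irreducible representation, because the central $C_0(\widehat A)$ acts by scalars in any irreducible representation. Thus every irreducible representation of $C^*_r(G)_{P,\sigma}$ factors through $\Compact(\Ind_P^G(\sigma\otimes\varphi))$ for some $\varphi$. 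Since $\Compact$ of a Hilbert space has a unique irreducible representation, namely the identity on that Hilbert space, the irreducible representation in question is a subrepresentation of $\Ind_P^G(\sigma\otimes\varphi)$ — and conversely, every irreducible constituent of $\Ind_P^G(\sigma\otimes\varphi)$ gives, via the surjection onto $\Compact(\Ind_P^G(\sigma\otimes\varphi))$ restricted to the relevant invariant subspace, an irreducible representation of $C^*_r(G)_{P,\sigma}$.

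The main obstacle I expect is keeping the bookkeeping straight: one must be careful that $C^*_r(G)_{P,\sigma}$, as an abstract quotient of $C^*_r(G)$, really does map \emph{onto} $\Compact(\Ind_P^G(\sigma\otimes\varphi))$ for each $\varphi$ (not merely into it). This surjectivity follows from the fact, noted in the Remark after Lemma~\ref{lem-cpt-action1}, that $C^*_r(L)$ surjects onto $\Compact(\H_\sigma)$; tensoring and using Proposition~\ref{prop-cpt-iso2}-type reasoning, or more simply using that $C^*_r(G)$ surjects onto $\Compact$ of any induced tempered irreducible (which itself uses Theorem~\ref{thm-cpt-action1} and irreducibility of $\Ind_P^G(\sigma\otimes\varphi)$ generically — though in the non-generic case one only needs the image to contain $\Compact$ of each invariant summand, and the image is already all of $\Compact(\Ind_P^G(\sigma\otimes\varphi))$ since that algebra is simple when the representation is a finite multiple-free sum, or more carefully, the image is a $C^*$-subalgebra containing all rank-one projections onto irreducible constituents). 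I would phrase this via: the localization $C^*_r(G)\to\Bounded(\Ind_P^G(\sigma\otimes\varphi))$ has image inside $\Compact$, is nondegenerate, hence its image is all of $\Compact(\Ind_P^G(\sigma\otimes\varphi))$ because a $C^*$-subalgebra of $\Compact(\H)$ acting nondegenerately and irreducibly-decomposably with the induced representation being a direct sum of irreducibles must be everything once we know it contains enough compacts — the cleanest route is to observe that the image is a nonzero ideal in its own right only after decomposing, so I would instead just invoke that $C^*_r(G)$ acts on $\Ind_P^G\H_\sigma$ with image $C^*_r(G)_{P,\sigma}$ by definition, and identify the spectrum of $C^*_r(G)_{P,\sigma}$ directly with $\{$irreducible constituents of $\Ind_P^G(\sigma\otimes\varphi):\varphi\in\widehat A\}$ using the fibering over $\widehat A$ and the elementary structure of $\Compact$ of a Hilbert space.
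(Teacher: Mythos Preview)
Your overall strategy is close to the paper's: both fibre the spectrum of $C^*_r(G)_{P,\sigma}$ over $\widehat A$ and identify the fibre at $\varphi$ with the set of irreducible constituents of $\Ind_P^G(\sigma\otimes\varphi)$. Your central-character argument correctly shows that every irreducible representation $\pi$ of $C^*_r(G)_{P,\sigma}$ lives over a single point $\varphi\in\widehat A$. The confusion is in the next step. The map from $C^*_r(G)_{P,\sigma}$ (or its fibre quotient at $\varphi$) into $\Compact\bigl(\Ind_P^G(\sigma\otimes\varphi)\bigr)$ is \emph{not} surjective at reducibility points: if $\Ind_P^G(\sigma\otimes\varphi)\cong V_1\oplus V_2$ with $V_1,V_2$ inequivalent irreducibles, then the image of $C^*_r(G)$ is $\Compact(V_1)\oplus\Compact(V_2)$, a proper subalgebra of $\Compact(V_1\oplus V_2)$. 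Your parenthetical attempts to rescue this are off: $\Compact(H)$ is always simple regardless of how $G$ acts on $H$, and invoking ``$\Compact$ has a unique irreducible'' would, if the map \emph{were} onto, force $\pi$ to be the \emph{whole} induced representation rather than a constituent. Surjectivity is both false in general and unnecessary for the lemma.

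The paper avoids all of this by quoting a single general fact (Dixmier, Proposition~2.10.2): for any inclusion of $C^*$-algebras $C\subseteq B$, every irreducible representation of $C$ arises as the restriction of some irreducible representation of $B$ to a $C$-invariant, $C$-irreducible subspace. With $C=C^*_r(G)_{P,\sigma}$ and $B=\Compact(\Ind_P^G\H_\sigma)$, the Morita equivalence between $B$ and $C_0(\widehat A)$ (implemented by the Hilbert module $\Ind_P^G\H_\sigma$ itself) identifies the irreducible representations of $B$ with the evaluation representations on the fibres $\Ind_P^G H_{\sigma\otimes\varphi}$, and the conclusion follows immediately. No analysis of the image in the fibre compacts is required. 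If you replace your surjectivity paragraph with this subalgebra lemma, your argument becomes correct and essentially coincides with the paper's.
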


\begin{proof}
If $C$ is any $C^*$-subalgebra of a  $C^*$-algebra $B$, then every irreducible representation of $C$ is equivalent to the restriction of an irreducible representation of $B$ to a $C$-invariant, $C$-irreducible subspace (see \cite[Proposition 2.10.2]{DixmierEnglish}). In the present case, where $C=C^*_r (G)_{P,\sigma}$ and $B=\Compact (\Ind_P^G \H_\sigma)$, the irreducible representations of $B$ are the natural representations given by evaluation at $\varphi\in \widehat A$ on the Hilbert spaces $\Ind_P^G H_{\sigma\otimes\varphi}$. Indeed, $\Ind_P^G \H_\sigma$ gives a Morita equivalence between $B$ and the commutative $\Cs$-algebra $C_0(\widehat{A})$, so the irreducible representations of  $B$ are exactly those induced from irreducible representations of $C_0(\widehat{A})$,  which are in turn given by evaluation maps. To conclude, observe that $\Ind_P^G \H_\sigma\otimes_{C_0(\widehat{A})}\C_\varphi\simeq\Ind_P^G H_{\sigma\otimes\varphi}$ as Hilbert spaces, so that once restricted to $C$, this is precisely the principal series representation $\Ind_P^G  (\sigma\otimes \varphi)$.
\end{proof}

\begin{definition}
Let  $\varphi\colon A \to B$ be a surjective homomorphism of $C^*$-algebras. The \emph{support} of $\varphi$ is the subset of the dual $\widehat A$ consisting of all irreducible representations that factor through $\varphi$ (that is, they vanish on the kernel of $\varphi$).
\end{definition}

\begin{lemma}
\label{lem-two-ideals}
If $\varphi_1 \colon A \to B_1$ and $\varphi_2 \colon A \to B_2$ are surjective homomorphisms of $C^*$-algebras, and if the support of $\varphi_1$ is disjoint from the support of $\varphi_2$, then the homomorphism
\[
\varphi_1 \oplus \varphi_2 \colon A \longrightarrow B_1 \oplus B_2 
\]
is surjective.  
\end{lemma}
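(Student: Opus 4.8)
The plan is to reformulate surjectivity of $\varphi_1\oplus\varphi_2$ as an identity between the two kernels, and then to verify that identity by a short representation-theoretic argument.

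Write $I_j=\ker\varphi_j$, so that $\varphi_j$ identifies $B_j$ with $A/I_j$. The first step is the elementary observation that $\varphi_1\oplus\varphi_2\colon A\to B_1\oplus B_2$ is surjective if and only if $I_1+I_2=A$. Indeed, given $(b_1,b_2)\in B_1\oplus B_2$, surjectivity of $\varphi_1$ and $\varphi_2$ lets us choose $a_1,a_2\in A$ with $\varphi_j(a_j)=b_j$; a preimage of $(b_1,b_2)$ is then an element $c\in A$ with $c-a_1\in I_1$ and $c-a_2\in I_2$, and such a $c$ exists precisely when $a_1-a_2\in I_1+I_2$. Taking $a_2=0$ shows conversely that surjectivity forces every element of $A$ into $I_1+I_2$.

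The second step records that $I_1+I_2$ is a \emph{closed} two-sided ideal of $A$. This is a standard feature of $C^*$-algebras: it follows, for instance, from the closed-range property of $*$-homomorphisms applied to the restriction to $I_1$ of the quotient map $A\to A/I_2$, whose image is $(I_1+I_2)/I_2$, so that $I_1+I_2$ is the preimage of a closed set. Consequently $A/(I_1+I_2)$ is a $C^*$-algebra, and it will suffice to show that it is the zero algebra. For this I would use that a nonzero $C^*$-algebra always carries an irreducible representation: if $A/(I_1+I_2)$ were nonzero, pulling such a representation back along $A\to A/(I_1+I_2)$ would produce an irreducible representation $\pi$ of $A$ annihilating $I_1+I_2$, hence annihilating both $I_1$ and $I_2$. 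But $\pi$ annihilates $I_j$ exactly when it factors through $\varphi_j$, i.e.\ when $\pi$ lies in the support of $\varphi_j$; thus $\pi$ would lie in the support of $\varphi_1$ and in the support of $\varphi_2$, contradicting disjointness. Therefore $A/(I_1+I_2)=0$, that is $I_1+I_2=A$, and by the first step $\varphi_1\oplus\varphi_2$ is surjective.

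There is no serious obstacle here: the argument is routine once the reduction to $I_1+I_2=A$ is made. The only ingredients particular to $C^*$-algebra theory are the closedness of a sum of closed ideals and the fact that every nonzero $C^*$-algebra admits an irreducible representation, both of which we invoke as standard.
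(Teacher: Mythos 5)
Your argument is correct and follows exactly the paper's own proof: reduce surjectivity to the identity $\ker\varphi_1+\ker\varphi_2=A$, note that this sum is a closed two-sided ideal, and deduce it equals $A$ because otherwise the quotient $C^*$-algebra would carry an irreducible representation lying in both supports. The only difference is that you spell out a few routine steps the paper leaves implicit.
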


\begin{proof}
To say that the supports are disjoint is to say that no irreducible representation can vanish on the kernels of both $\varphi_1$ and $\varphi_2$.  But this can only happen when the algebraic sum of the kernels, which is in any case a closed, two-sided ideal in $A$,  is equal to $A$ (otherwise the quotient $C^*$-algebra $A/(J_1+J_2)$ would have a non-zero irreducible representation).    But elementary linear algebra shows that if $J_1 +J_2 = A$, then the 
natural projection map  
\[
\varphi_1 \oplus \varphi_2 \colon A \longrightarrow A/J_1 \oplus A/J_2 
\]
is surjective.  
\end{proof}

\begin{corollary}
\label{cor-n-ideals}
If $\varphi_k\colon A \to B_k$ for $k=1,\dots, n$  are surjective homomorphisms of $C^*$-algebras, and if the supports of the $\varphi_k$ are pairwise disjoint, then the homomorphism
\[
\bigoplus _k \varphi_k   \colon A \longrightarrow \bigoplus_k B_k 
\]
is surjective. \qed
\end{corollary}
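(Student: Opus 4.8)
The plan is to induct on $n$. The case $n=1$ is trivial and the case $n=2$ is exactly Lemma~\ref{lem-two-ideals}, so I would assume $n\ge 3$ and that the statement is already known for families of $n-1$ surjections with pairwise disjoint supports. The idea is simply to split off the last map and apply the two-map case.

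More precisely: the subfamily $\varphi_1,\dots,\varphi_{n-1}$ still has pairwise disjoint supports, so the inductive hypothesis makes
\[
\psi \;=\; \bigoplus_{k=1}^{n-1}\varphi_k \;\colon\; A \longrightarrow \bigoplus_{k=1}^{n-1} B_k
\]
a surjective homomorphism of $C^*$-algebras; in particular its support is defined. It would then suffice to check that $\operatorname{supp}(\psi)$ and $\operatorname{supp}(\varphi_n)$ are disjoint, because Lemma~\ref{lem-two-ideals} applied to the pair $(\psi,\varphi_n)$ would give that
\[
\psi\oplus\varphi_n \;\colon\; A \longrightarrow \Bigl(\bigoplus_{k=1}^{n-1}B_k\Bigr)\oplus B_n \;=\; \bigoplus_{k=1}^{n} B_k
\]
is surjective, and $\psi\oplus\varphi_n$ is visibly the map $\bigoplus_{k=1}^n\varphi_k$ of the statement.

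Thus the argument reduces to the identity $\operatorname{supp}(\psi)=\bigcup_{k=1}^{n-1}\operatorname{supp}(\varphi_k)$, and this is the step I expect to require the most care. One inclusion is immediate, since $\ker\psi=\bigcap_{k=1}^{n-1}\ker\varphi_k$ is contained in each $\ker\varphi_k$, so any representation factoring through some $\varphi_k$ factors through $\psi$. For the reverse inclusion, let $\pi$ be an irreducible representation in $\operatorname{supp}(\psi)$, so that $\ker\pi\supseteq\bigcap_{k=1}^{n-1}\ker\varphi_k$. The ideal $\ker\pi$ is primitive, hence prime (see \cite{DixmierEnglish}), and a prime ideal containing a finite intersection of ideals must contain one of them, as one sees by iterating the two-ideal case using $I_1\cap I_2\supseteq I_1 I_2$. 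Therefore $\ker\pi\supseteq\ker\varphi_k$ for some $k\le n-1$, i.e.\ $\pi\in\operatorname{supp}(\varphi_k)$. With this identity in hand, each $\operatorname{supp}(\varphi_k)$ with $k\le n-1$ is disjoint from $\operatorname{supp}(\varphi_n)$ by hypothesis, hence so is their union $\operatorname{supp}(\psi)$, and the induction closes.
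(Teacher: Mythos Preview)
Your proposal is correct and follows the same inductive reduction to Lemma~\ref{lem-two-ideals} that the paper has in mind (the paper simply writes \qed). One minor simplification: the step where you invoke ``primitive $\Rightarrow$ prime'' to show $\operatorname{supp}(\psi)\subseteq\bigcup_{k<n}\operatorname{supp}(\varphi_k)$ can be replaced by the more elementary observation that any irreducible representation of the finite direct sum $\bigoplus_{k<n}B_k$ factors through a single summand $B_j$, hence any $\pi\in\operatorname{supp}(\psi)$ already lies in some $\operatorname{supp}(\varphi_j)$.
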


\begin{lemma}
\label{lem-many-ideals}
Let  $\{ \varphi_\alpha \colon A \to B_\alpha\} $ be a   family of surjective homomorphisms of $C^*$-algebras. Assume that
\begin{enumerate}[\rm (a)]
\item the supports of the homomorphisms $\varphi_\alpha$ are pairwise disjoint, and 
\item the direct sum $\oplus _\alpha \varphi_\alpha$  maps $A$ into  $\oplus_\alpha B_\alpha$.  That is, 
\[
\lim _{\alpha \to \infty} \| \varphi_\alpha(a) \| = 0
\]
for every $a\in A$.
\end{enumerate}
Then the  homomorphism
\[
\bigoplus _\alpha \varphi_\alpha  \colon A \longrightarrow \bigoplus _\alpha B_\alpha  
\]
 that results from \textup{(}b\textup{)}  is  surjective. \end{lemma}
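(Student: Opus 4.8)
The plan is to reduce the statement to the finite case already handled by Corollary~\ref{cor-n-ideals}, using condition (b) to control the tail of the family. First I would note that surjectivity of $\bigoplus_\alpha \varphi_\alpha$ is equivalent to showing that every element of the $C^*$-direct sum $\bigoplus_\alpha B_\alpha$ of the form $b = (b_\alpha)$ lies in the image. By density, it suffices to treat elements $b$ with only finitely many nonzero coordinates, say $b_\alpha = 0$ for $\alpha \notin F$ with $F$ finite; indeed the image is closed (being the continuous image of a $C^*$-algebra, hence a $C^*$-subalgebra of the target), so if every such finitely-supported $b$ is in the image then so is every $b$ in the closure, which is all of $\bigoplus_\alpha B_\alpha$.

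Next, for a fixed finite subset $F$, apply Corollary~\ref{cor-n-ideals} to the finite subfamily $\{\varphi_\alpha : \alpha \in F\}$, whose supports are pairwise disjoint by hypothesis (a): this gives some $a_0 \in A$ with $\varphi_\alpha(a_0) = b_\alpha$ for all $\alpha \in F$. The difficulty is that $a_0$ may have nonzero image under the $\varphi_\beta$ for $\beta \notin F$, so $\bigoplus_\alpha \varphi_\alpha(a_0)$ need not equal $b$. To fix this I would need to correct $a_0$ by subtracting something that kills the unwanted tail coordinates while leaving the coordinates in $F$ intact. The natural tool is an approximate unit argument combined with disjointness of supports: since the supports are pairwise disjoint, the kernels $J_\alpha = \ker\varphi_\alpha$ satisfy $J_\alpha + \bigcap_{\beta \in F} J_\beta = A$ for each $\alpha \notin F$ — more usefully, the closed ideal $J_F = \bigcap_{\alpha \in F} J_\alpha$ surjects under each $\varphi_\beta$, $\beta \notin F$, onto all of $B_\beta$ (this is the content of disjointness: no irreducible representation of $B_\beta$ vanishes on $J_F$).

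The cleanest route: having found $a_0$, consider the element $(\varphi_\beta(a_0))_{\beta \notin F}$, which lies in $\bigoplus_{\beta \notin F} B_\beta$ by condition (b). If by induction (or by a direct argument using disjointness one more time) I can find $a_1 \in J_F$ — i.e. with $\varphi_\alpha(a_1) = 0$ for all $\alpha \in F$ — such that $\varphi_\beta(a_1) = \varphi_\beta(a_0)$ for all $\beta \notin F$, then $a = a_0 - a_1$ has $\varphi_\alpha(a) = b_\alpha$ for $\alpha \in F$ and $\varphi_\beta(a) = 0$ for $\beta \notin F$, so $\bigoplus_\alpha\varphi_\alpha(a) = b$ and we are done. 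Finding such an $a_1$ is exactly the same problem as the original one but with target $\bigoplus_{\beta \notin F} B_\beta$ and source the ideal $J_F$ in place of $A$; the point is that $J_F$ still surjects onto each $B_\beta$ for $\beta \notin F$ (by disjointness of supports, via the Dixmier-style argument in Lemma~\ref{lem-two-ideals}), and condition (b) is inherited. So an alternative, and perhaps more honest, formulation of the proof is: work entirely with the restriction $\Phi = \bigoplus_\alpha \varphi_\alpha : A \to \bigoplus_\alpha B_\alpha$, show its image is a closed $*$-subalgebra containing, for each finite $F$, the finitely supported families supported in $F$ (by Corollary~\ref{cor-n-ideals} applied after passing to a suitable quotient/ideal), and conclude by density.

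The main obstacle I expect is the bookkeeping needed to produce the ideal-level corrector $a_1$: one must verify that $J_F = \bigcap_{\alpha \in F}\ker\varphi_\alpha$ genuinely surjects onto $\prod_{\beta \notin F} B_\beta$ (in the $C^*$-sense), which boils down to re-running the disjoint-support argument of Lemma~\ref{lem-two-ideals} with $J_F$ playing the role of $A$ — legitimate because $J_F$ is itself a $C^*$-algebra and the supports of the $\varphi_\beta|_{J_F}$ for $\beta\notin F$ are still pairwise disjoint and still miss the support of each $\varphi_\alpha$, $\alpha\in F$. Everything else — closedness of the image, reduction to finitely supported targets, the final density step — is routine $C^*$-algebra.
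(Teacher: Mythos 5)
Your strategy---lift a finitely-supported $b$ to some $a_0 \in A$ via Corollary~\ref{cor-n-ideals}, then subtract a corrector $a_1 \in J_F = \bigcap_{\alpha\in F}\ker\varphi_\alpha$ to annihilate the tail---is the right one, and is in fact the same scheme the paper uses (with $F$ a singleton). But the decisive step is precisely the one you gloss over: you need $\bigoplus_{\beta\notin F}\varphi_\beta$ to carry $J_F$ \emph{onto} the $C^*$-direct sum $\bigoplus_{\beta\notin F}B_\beta$, and this is again an \emph{infinite}-family surjectivity assertion of exactly the shape of the lemma being proved. Your proposal fills the gap by ``re-running the disjoint-support argument of Lemma~\ref{lem-two-ideals},'' but that lemma and Corollary~\ref{cor-n-ideals} handle only finitely many homomorphisms; applied to $J_F$ they tell you that the image of $J_F$ contains every finitely-supported family in $\bigoplus_{\beta\notin F}B_\beta$, which is the density you already have, not the surjectivity you need. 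You correctly observe that hypotheses (a) and (b) are inherited by $J_F$, but absent a genuinely new idea this reduction is circular.

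The idea the paper supplies to close the circle is a Dixmier-style ``irreducible representation of the quotient'' argument rather than a direct construction of $a_1$. The image of $J = \ker\varphi_{\alpha_0}$ under $\bigoplus_{\alpha\neq\alpha_0}\varphi_\alpha$ is automatically a closed $*$-subalgebra of $\bigoplus_{\alpha\neq\alpha_0}B_\alpha$. By Corollary~\ref{cor-n-ideals} together with condition (b), the image of $A$ itself is dense in $\bigoplus_{\alpha\neq\alpha_0}B_\alpha$, so the image of the ideal $J\subseteq A$, being a closed set and an ideal in a dense subalgebra, is a closed two-sided ideal of all of $\bigoplus_{\alpha\neq\alpha_0}B_\alpha$. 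If this ideal were proper, the quotient would carry a nonzero irreducible representation, whose class in $\widehat{A}$ would lie simultaneously in the support of $\varphi_{\alpha_0}$ and in the support of some $\varphi_\alpha$ with $\alpha\neq\alpha_0$, contradicting (a). This is exactly the ``direct argument using disjointness one more time'' you alluded to in passing but did not produce; without it, the proof does not close.
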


\begin{proof}
Fix any index $\alpha_0$ and organize the direct sum $C^*$-algebra as 
\[
\bigoplus_\alpha B_\alpha  = B_{\alpha_0} \oplus \bigoplus_{\alpha\ne \alpha_0} B_\alpha.
\]
To prove the lemma it suffices to show that the image of the homomorphism $\oplus  \varphi_\alpha$  contains all elements in the direct sum that are zero in the second term.
Let $J $ be the kernel of $\varphi_{\alpha_0}$.  The image of $J $ in $\oplus _{\alpha \ne \alpha_0} B_\alpha$ is a closed $C^*$-subalgebra, as indeed is the image of any $C^*$-algebra homomorphism.  But it follows from Corollary~\ref{cor-n-ideals} and the assumption (b) in the lemma that the image is also a two-sided  ideal.   This image ideal must be all of $\oplus _{\alpha \ne \alpha_0} B_\alpha$, for if it wasn't, the quotient $C^*$-algebra would have an irreducible representation $\pi$.  Its equivalence class, viewed as a point in $\widehat A$,  would lie in the support of $\varphi_{\alpha_0}$, and also in the support of $\varphi_\alpha$ for some $\alpha\ne \alpha_0$, contradicting disjointness of supports.
\end{proof}

Combining the Langlands Disjointness Theorem with these elementary observations we arrive at the following result: 

\begin{proposition}
Form the $C^*$-algebra direct sum 
\[
\bigoplus _{[P,\sigma]} C^*_r (G)_{P,\sigma}
\]
indexed by a set of representatives of the associate classes of pairs $(P,\sigma)$.  The quotient mappings from $C^*_r (G)$ into each summand determine a  $C^*$-algebra homomorphism
\[
\Csr(G) \longrightarrow  \bigoplus _{[P,\sigma]} C^*_r (G)_{P,\sigma} ,
\]
and moreover this homomorphism is surjective.
\end{proposition}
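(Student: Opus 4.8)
The plan is to deduce the proposition from Lemma~\ref{lem-many-ideals}, applied to the family of quotient homomorphisms $\varphi_{[P,\sigma]}\colon \Csr(G)\to C^*_r(G)_{P,\sigma}$ indexed by the associate classes of pairs $(P,\sigma)$. Hypothesis (b) of that lemma — that the direct sum of the $\varphi_{[P,\sigma]}$ maps $\Csr(G)$ into the $C^*$-algebraic direct sum $\bigoplus_{[P,\sigma]}C^*_r(G)_{P,\sigma}$ — is exactly the content of the corollary immediately preceding together with Remark~\ref{rem-dir-sum}, so it is already in hand. It therefore remains only to verify hypothesis (a): that the supports of the $\varphi_{[P,\sigma]}$, as subsets of the reduced dual of $G$, are pairwise disjoint.

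To identify these supports I would first invoke Lemma~\ref{lem-easy-cstar1}. By definition, the support of $\varphi_{[P,\sigma]}$ is the set of irreducible representations of $\Csr(G)$ that factor through $C^*_r(G)_{P,\sigma}$; by Lemma~\ref{lem-easy-cstar1} this is precisely the set of irreducible constituents of the principal series representations $\Ind_P^G(\sigma\otimes\varphi)$ as $\varphi$ ranges over $\widehat A$. Consequently, if the supports of $\varphi_{[P_1,\sigma_1]}$ and $\varphi_{[P_2,\sigma_2]}$ have a common point, then some $\Ind_{P_1}^G(\sigma_1\otimes\varphi_1)$ and some $\Ind_{P_2}^G(\sigma_2\otimes\varphi_2)$ share an irreducible constituent.

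At that point I would apply the Langlands Disjointness Theorem (Theorem~\ref{thm-Disjointness}): it produces an element of $G$ conjugating the Levi factor of $P_1$ onto that of $P_2$ and carrying $\sigma_1\otimes\varphi_1$ to a representation unitarily equivalent to $\sigma_2\otimes\varphi_2$. The one point needing a word of justification is that this already forces $(P_1,\sigma_1)$ and $(P_2,\sigma_2)$ to be associate in the sense of Definition~\ref{def-associate-class}: since the pieces $M_P$ and $A_P$ of the Langlands decomposition of a Levi factor $L=M_PA_P$ are intrinsically determined by $L$ — $M_P$ being generated by the compact subgroups of $L$, and $A_P$ the identity component of the maximal split central torus — any group element conjugating the Levi factor of $P_1$ onto that of $P_2$ must carry $M_{P_1}$ to $M_{P_2}$; restricting the conjugate of $\sigma_1\otimes\varphi_1$ to $M_{P_2}$ and comparing with $\sigma_2\otimes\varphi_2$ then shows the element conjugates $\sigma_1$ to a representation unitarily equivalent to $\sigma_2$. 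Hence the two pairs lie in the same associate class and index the same summand; equivalently, distinct summands have disjoint supports. This verifies hypothesis (a), and Lemma~\ref{lem-many-ideals} delivers surjectivity.

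The only step I expect to require genuine care is the last one — extracting from Theorem~\ref{thm-Disjointness}, whose conclusion is phrased for the representations $\sigma_i\otimes\varphi_i$ of $P_i$, the precise associate relation of Definition~\ref{def-associate-class} for the pairs $(P_i,\sigma_i)$, which rests on the intrinsic characterization of the Langlands factors. Everything else is an assembly of results already established: the landing of the map in the direct sum, the computation of supports via Lemma~\ref{lem-easy-cstar1}, and the abstract $C^*$-algebraic surjectivity criterion of Lemma~\ref{lem-many-ideals}.
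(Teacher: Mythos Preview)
Your proposal is correct and follows essentially the same route as the paper: verify the two hypotheses of Lemma~\ref{lem-many-ideals}, with (b) coming from Remark~\ref{rem-dir-sum} and (a) from the Langlands Disjointness Theorem~\ref{thm-Disjointness}. Your argument is in fact more explicit than the paper's terse two-sentence proof, since you spell out the role of Lemma~\ref{lem-easy-cstar1} in identifying the supports and take care to extract the associate relation of Definition~\ref{def-associate-class} from the conclusion of Theorem~\ref{thm-Disjointness} via the intrinsic characterization of the Langlands factors.
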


\begin{proof}
It follows from Theorem \ref{thm-Disjointness} that condition (a) in Lemma \ref{lem-many-ideals} is satisfied. As for (b), it holds by Remark \ref{rem-dir-sum}.
\end{proof}

We need one final result from representation theory in this section: 
 
\begin{theorem}
\label{thm-hc-principle}
\textup{(See \cite[Lemma 4.10]{MR1011897} or \cite{MR0453929}.)}
 Every tempered irreducible representation of $G$ may be realized as a subrepresentation of a principal series  representation.
\end{theorem}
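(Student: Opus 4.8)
The plan is to recognise this as Harish--Chandra's and Langlands's subrepresentation theorem, specialised to tempered irreducibles, and to prove it through the theory of the constant term (Jacquet modules). Let $(\pi,H)$ be an irreducible tempered unitary representation of $G$ and let $V=H_K$ be its underlying admissible Harish--Chandra $(\mathfrak g,K)$-module. For each standard parabolic $Q=M_QA_QN_Q$ one forms the constant term $V_Q$, a finitely generated admissible module over $(\mathfrak m_Q\oplus\mathfrak a_Q,\,K\cap M_Q)$, together with its set of \emph{exponents}, the generalised $\mathfrak a_Q$-weights occurring in $V_Q$.

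The key steps, in order, are as follows. (1) Invoke Harish--Chandra's asymptotic analysis of tempered matrix coefficients --- in the form of the Casselman--Mili\v{c}i\'{c} temperedness criterion --- to conclude that every exponent of every constant term $V_Q$ has real part in the closed antidominant chamber of $\mathfrak a_Q^*$. (2) Using these bounds, produce a standard parabolic $P=M_PA_PN$ --- roughly, the smallest one that still detects the non-decaying directions of the matrix coefficients of $\pi$ --- for which $V_P$ has a quotient of the form $\sigma\otimes\varphi$, with $\sigma$ a square-integrable representation of $M_P$ and $\varphi$ a unitary character of $A_P$; that $\sigma$ is square-integrable amounts to its exponents lying strictly inside the chamber, which follows from the minimality of $P$ together with step~(1). (3) Apply Casselman's Frobenius reciprocity, which turns the quotient map $V_P\twoheadrightarrow\sigma\otimes\varphi$ (for the correct Jacquet-module convention) into a nonzero $(\mathfrak g,K)$-homomorphism from $V$ into the $K$-finite part of $\Ind_Q^G(\sigma\otimes\varphi)$, where $Q$ is $P$ or its opposite; irreducibility of $V$ forces this homomorphism to be injective. (4) Since $Q$ is associate to a standard parabolic, Theorem~\ref{thm-associate-reps} lets us rewrite $\Ind_Q^G(\sigma\otimes\varphi)$ in the paper's normalisation of principal series; finally, globalise the $(\mathfrak g,K)$-embedding to a unitary embedding of Hilbert spaces, using unitarity of both representations and admissibility, so that the Hilbert-space closure of the image is a genuine closed subrepresentation.

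The main obstacle is steps (1)--(2): the precise control of the exponents of the constant terms, and the characterisation of the discrete series by the location of their exponents, is the genuinely hard part of Harish--Chandra's harmonic analysis, and it is what the cited references ultimately rest on; everything else in the argument is formal. I would also note a second, cheaper route, more in keeping with the methods of this paper. The algebra $\Csr(G)$ acts faithfully on $L^2(G)$ by construction, and Harish--Chandra's Plancherel theorem exhibits $L^2(G)$ as a direct integral of the principal series $\Ind_P^G(\sigma\otimes\varphi)$; hence any element of $\Csr(G)$ that annihilates all of these acts as $0$ on $L^2(G)$ and is therefore zero. Combined with the surjectivity proved in the preceding Proposition, this shows that $\Csr(G)\to\bigoplus_{[P,\sigma]}C^*_r(G)_{P,\sigma}$ is an isomorphism, so every irreducible representation of $\Csr(G)$ --- equivalently, every tempered irreducible representation of $G$ --- factors through a single summand $C^*_r(G)_{P,\sigma}$ and is, by Lemma~\ref{lem-easy-cstar1}, an irreducible constituent of some principal series $\Ind_P^G(\sigma\otimes\varphi)$; since that principal series is unitary and admissible it decomposes as a direct sum of irreducibles, so ``constituent'' means ``direct summand'', i.e.\ subrepresentation. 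The drawback is that this route is circular within the present exposition, the Plancherel theorem being most naturally established after this theorem, so I would keep the constant-term argument as the primary proof.
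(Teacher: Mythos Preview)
The paper does not supply a proof of this theorem at all: it is one of the ``substantial theorems in representation theory that we shall merely quote,'' and the text simply records the statement together with citations to Langlands and to Trombi. So there is no proof in the paper to compare against.

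Your primary outline is an accurate summary of the standard argument behind those citations. The sequence --- pass to the Harish--Chandra module, use the Casselman--Mili\v{c}i\'{c} exponent bound to locate a parabolic $P$ at which the Jacquet module has a discrete-series-times-unitary-character quotient, invoke Casselman's Frobenius reciprocity to obtain an embedding, and globalise --- is exactly the Langlands--Trombi mechanism. You correctly flag that the hard analytic content sits in steps (1)--(2). One small caution on step (4): the passage from a $(\mathfrak g,K)$-embedding to a unitary Hilbert-space embedding is not entirely formal; it uses that both sides are unitary and admissible and that the image, being a $(\mathfrak g,K)$-submodule of a unitary representation, has closure equal to a closed $G$-invariant subspace. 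You state this, but it is worth being explicit that irreducibility of $\pi$ makes the map isometric up to scalar.

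Your second route via the Plancherel formula is also logically sound as an independent proof, and is in fact closer in spirit to the $C^*$-algebraic methods of this paper. Your worry about circularity is about exposition rather than logic: Harish--Chandra's Plancherel theorem is established in the literature without appeal to the present theorem, so one may legitimately invoke it. Within this paper's narrative, however, the Plancherel formula appears only in the final section, after Proposition~\ref{prop-reductive-cstar-structure1} has already been proved using Theorem~\ref{thm-hc-principle}, so citing the external references is the cleaner choice --- which is what the paper does.
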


\begin{proposition}
\label{prop-reductive-cstar-structure1}
Let $G$ be a real reductive group.  The  $C^*$-algebra homomorphism
\[
\Csr(G)\longrightarrow \bigoplus _{[P,\sigma]}  C^*_r(G)_{P,\sigma}
\]
is an isomorphism.
\end{proposition}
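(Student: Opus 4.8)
The homomorphism is already known to be surjective by the previous proposition, so the plan is to prove injectivity. Equivalently, I would show that the family of quotient maps $C^*_r(G)\to C^*_r(G)_{P,\sigma}$, indexed by associate classes $[P,\sigma]$, separates the irreducible representations of $C^*_r(G)$; injectivity of a $C^*$-morphism with this separation property then follows from the fact that a nonzero element of a $C^*$-algebra is detected by some irreducible representation, together with the observation that a direct sum homomorphism is injective precisely when the intersection of the kernels of the coordinate maps is zero.

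\textbf{Key steps.} First, recall from Theorem~\ref{thm-hc-principle} (the Harish-Chandra/Langlands subrepresentation theorem) that every irreducible tempered representation $\pi$ of $G$ — equivalently, every irreducible representation of $C^*_r(G)$ — occurs as a subrepresentation of some principal series $\Ind_P^G(\sigma\otimes\varphi)$. Second, invoke Lemma~\ref{lem-easy-cstar1}: the irreducible representations of $G$ that factor through the quotient $C^*_r(G)\to C^*_r(G)_{P,\sigma}$ are exactly the irreducible constituents of the $\Ind_P^G(\sigma\otimes\varphi)$ as $\varphi$ ranges over $\widehat A$. Combining these, every irreducible representation of $C^*_r(G)$ factors through at least one summand $C^*_r(G)_{P,\sigma}$, hence does not vanish on that summand's ideal complement; so no nonzero irreducible representation can kill every coordinate map, and the intersection of the kernels is $\{0\}$. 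Third, translate this into injectivity: if $a\in C^*_r(G)$ has $\varphi_{P,\sigma}(a)=0$ for all $[P,\sigma]$, then every irreducible representation of $C^*_r(G)$ annihilates $a$ (since each such representation factors through some summand), whence $a=0$ because the irreducible representations of a $C^*$-algebra are jointly faithful. Together with the established surjectivity, this gives the isomorphism.

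\textbf{Main obstacle.} The only subtle point is matching up the two descriptions of the spectrum: one must be sure that the associate classes $[P,\sigma]$ index the summands without redundancy or omission in a way compatible with the Disjointness Theorem~\ref{thm-Disjointness} (which guarantees the supports of distinct coordinate maps are disjoint in $\widehat{C^*_r(G)}$) and with the subrepresentation theorem (which guarantees the supports exhaust $\widehat{C^*_r(G)}$). Since surjectivity has already been proved using exactly disjointness plus Remark~\ref{rem-dir-sum}, the remaining work for injectivity is essentially bookkeeping: no irreducible representation of $C^*_r(G)$ is missed. I do not expect any genuinely hard analytic input here beyond the cited representation-theoretic theorems; the argument is a short diagram-chase in elementary $C^*$-algebra theory once Theorems~\ref{thm-Disjointness} and \ref{thm-hc-principle} and Lemma~\ref{lem-easy-cstar1} are in hand.
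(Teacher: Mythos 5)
Your argument is correct and is essentially the paper's proof: the paper likewise establishes injectivity by observing that a nonzero kernel would yield an irreducible representation of $C^*_r(G)$ not occurring as a subrepresentation of any principal series, contradicting Theorem~\ref{thm-hc-principle}, and combines this with the surjectivity already proved. You merely make explicit (via Lemma~\ref{lem-easy-cstar1}) the identification of which irreducible representations factor through each summand, a step the paper's one-sentence proof leaves implicit.
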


\begin{proof}
If the kernel was non-zero, then there would be an irreducible representation of $C^*_r (G)$ which did not vanish on it, and which therefore was distinct from any parabolically induced representation, or any subrepresentation of a parabolically induced representation, contrary to Theorem~\ref{thm-hc-principle}.
\end{proof}

\begin{remark}
It obviously follows from all of the above that the $C^*$-algebra $C^*(G)_{P,\sigma}$ depends only on the associate class of the pair $(P,\sigma)$, up to isomorphism.  This will be made more explicit in the next section.
\end{remark}

\section{Structure of  the Component C*-Algebras}
\label{sec-group-alg-structure}
 
In this section we shall determine the structure of the individual component $C^*$-algebras $C^*_r (G)_{P,\sigma}$ in Definition~\ref{def-component-alg} (or at any rate, enough of the structure of these algebras for our purposes).  Once again, to do so we shall rely on some substantial results in representation theory, namely Theorems~\ref{thm-intertwiner-existence} and \ref{thm-hc-completeness} below.

 Let $P_1$ and $P_2$ be standard parabolic subgroups of $G$ with Levi factors $L_i=M_{P_i}A_{P_i}$, and suppose given irreducible square-integrable unitary representations $\sigma_i$ of $M_{P_i}$ for $i\in\left\lbrace1,2 \right\rbrace $.

\begin{theorem}
\label{thm-intertwiner-existence}
If $w\in K$ conjugates $L_1$ into $L_2$, and if $\operatorname{Ad}_w ^* \sigma_1 \simeq \sigma_2$, then there is an $\Ad_w^*$-twisted unitary isomorphism  of Hilbert modules 
\[
U_w \colon \Ind _{P_1}^G   \H_{\sigma_1} \longrightarrow  \Ind _{P_2}^G   \H_{\sigma_2}
\]
that covers the isomorphism 
\[
\operatorname{Ad}_w ^* \colon C_0(\widehat A_{P_1})\longrightarrow  C_0(\widehat A_{P_2})
\]
in the sense of Definition \ref{def-twisted-action} and that commutes with the action of $C^*_r (G)$. 
It is unique up to a composition with an inner automorphism.
\end{theorem}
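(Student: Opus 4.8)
The plan is to produce the twisted unitary isomorphism $U_w$ at the level of the underlying function spaces, where parabolic induction can be written down concretely, and then verify the required compatibility conditions. First I would recall (or set up) the realization of $\Ind_{P_1}^G \H_{\sigma_1}$ and $\Ind_{P_2}^G \H_{\sigma_2}$ as completions of spaces of $H_{\sigma_i}$-valued functions on $G$ satisfying the appropriate equivariance condition under $P_i$ (incorporating the modular twist $\delta^{1/2}$), together with the fibering over $\widehat{A}_{P_i}$ supplied by the $\sigma_i \otimes \varphi$. The point is that the tensor-product description $C^*_r(G/N_i)\otimes_{C^*_r(L_i)}\H_{\sigma_i}$ unpacks, via the construction in Section~\ref{sec-parabolic-induction} and the definition of $\H_{\sigma_i}$, into such a function-space model. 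On this model, conjugation by $w$ — sending a function $f$ on $G$ to $g\mapsto f(w^{-1}gw)$, composed with the fixed unitary intertwiner $\operatorname{Ad}_w^*\sigma_1 \xrightarrow{\sim}\sigma_2$ on the Hilbert-space coefficients — carries the $P_1$-equivariance condition to the $P_2$-equivariance condition, because $w$ conjugates $L_1$ to $L_2$ and $N_1$ to $N_2$ and scales the relevant Haar measures compatibly.

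Second, I would check the two conditions of Definition~\ref{def-twisted-action}. The covering condition — that $U_w(v\cdot a) = U_w(v)\cdot \operatorname{Ad}_w^*(a)$ for $a\in C_0(\widehat{A}_{P_1})$ — is immediate from the fact that the $A_{P_1}$-action on coefficients is by the characters $\varphi$, and $\operatorname{Ad}_w^*$ relabels these characters by $\varphi\mapsto \varphi\circ\operatorname{Ad}_{w}^{-1}$; so pointwise multiplication is intertwined with the pullback on $\widehat{A}_{P_2}$. The inner-product identity $\langle U_w v_1, U_w v_2\rangle = \operatorname{Ad}_w^*(\langle v_1,v_2\rangle)$ reduces to the change-of-variables computation on the integral over $G/N_i$ (or over $K$, using $G=KP$): the Jacobian of conjugation by $w$ on $G/N_1\to G/N_2$ is exactly absorbed by the $\delta^{1/2}$ factors built into the module structures, this being the reason the $\delta^{1/2}$-normalization was introduced in Section~\ref{sec-parabolic-induction}. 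Commutation with the $C^*_r(G)$-action is then a one-line check on $C_c^\infty(G)$: left convolution by $f_0\in C_c^\infty(G)$ and the substitution $g\mapsto w^{-1}gw$ on the right-translation model commute because conjugation by $w$ is an automorphism of $G$ preserving Haar measure, and the coefficient intertwiner does not see the $G$-variable. All of these extend from the dense function spaces to the Hilbert module completions by continuity, exactly as in the proof of Lemma~\ref{lem-W-action-def}.

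Third, for uniqueness up to an inner automorphism: if $U_w$ and $U_w'$ are two such twisted isomorphisms covering the same $\operatorname{Ad}_w^*$, then $V = U_w^{-1}\circ U_w'$ is a twisted unitary automorphism of $\Ind_{P_1}^G\H_{\sigma_1}$ covering the \emph{identity} of $C_0(\widehat{A}_{P_1})$ and commuting with $C^*_r(G)$. By Proposition~\ref{prop-cpt-iso2} (applied with $W$ trivial) together with the Morita-equivalence picture used in the proof of Lemma~\ref{lem-easy-cstar1}, $\Ind_{P_1}^G\H_{\sigma_1}$ is an imprimitivity-type bimodule between $C^*_r(G)_{P_1,\sigma_1}$ and $C_0(\widehat{A}_{P_1})$; a $C_0(\widehat{A}_{P_1})$-linear unitary commuting with the left action is, fiberwise over each $\varphi\in\widehat{A}_{P_1}$, a scalar of modulus one, hence is multiplication by a unitary element $u$ in the multiplier algebra $C_b(\widehat{A}_{P_1})$ of $C_0(\widehat{A}_{P_1})$ — which is central. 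That is precisely an inner twisted unitary automorphism in the sense of Definition~\ref{def-inner-auto}, so $U_w' = U_w\circ V$ differs from $U_w$ by an inner automorphism.

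\textbf{Main obstacle.} The routine part is the change-of-variables bookkeeping; the step that requires genuine care is pinning down the function-space model of $\Ind_{P_i}^G\H_{\sigma_i}$ — in particular identifying $C^*_r(G/N_i)\otimes_{C^*_r(L_i)}\H_{\sigma_i}$ explicitly enough that one can see the $\delta^{1/2}$ factors and the $\widehat{A}_{P_i}$-fibering line up under conjugation by $w$ — and making sure the normalization conventions (Haar measures on $G$, $N_1$, $N_2$, $A_{P_i}$, and the chosen intertwiner $\operatorname{Ad}_w^*\sigma_1\simeq\sigma_2$) are chosen so that $U_w$ is genuinely isometric rather than merely bounded. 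Once that bookkeeping is fixed, the rest of the argument is formal.
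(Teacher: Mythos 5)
Your existence construction has a fatal gap. The map $f \mapsto f(w^{-1}\cdot w)$ (with the coefficient intertwiner) carries the $P_1$-equivariance condition to $wP_1w^{-1}$-equivariance, and $wP_1w^{-1}$ is a parabolic with Levi factor $L_2$ --- but its unipotent radical is $wN_1w^{-1}$, which in general is \emph{not} $N_2$. You write that ``$w$ conjugates $L_1$ to $L_2$ and $N_1$ to $N_2$'', but only the first statement is hypothesized, and the second one typically fails: already for $G=\mathrm{SL}(2,\mathbb R)$, $P_1=P_2$ the upper-triangular Borel, and $w$ the nontrivial Weyl element, $wNw^{-1}=\overline N$ is the \emph{opposite} unipotent. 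Conjugation therefore lands you in $\Ind_{wP_1w^{-1}}^G\H_{\sigma_2}$, not $\Ind_{P_2}^G\H_{\sigma_2}$. Passing between two parabolics that share a Levi factor but have different unipotent radicals is exactly where the hard analysis lives: one must compose with a Knapp--Stein normalized intertwining operator, whose construction (convergence of the intertwining integral in a cone, meromorphic continuation, normalization to achieve unitarity on the unitary axis) is a deep theorem of Knapp and Stein. The paper's proof quotes this result; your proposal replaces it with a change-of-variables computation, which only treats the degenerate case $wP_1w^{-1}=P_2$ and omits the actual content of the theorem. Your ``main obstacle'' paragraph identifies normalization bookkeeping as the hard part, which confirms that the Knapp--Stein input has been lost.

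The uniqueness argument also does not go through as written. You invoke a Morita equivalence between $C^*_r(G)_{P_1,\sigma_1}$ and $C_0(\widehat A_{P_1})$, but what the module $\Ind_{P_1}^G\H_{\sigma_1}$ actually provides (and what the proof of Lemma~\ref{lem-easy-cstar1} uses) is an imprimitivity bimodule between $C_0(\widehat A_{P_1})$ and the \emph{full} compact algebra $\Compact(\Ind_{P_1}^G\H_{\sigma_1})$; the $C^*_r(G)$-action only sees the $W_{\sigma_1}$-fixed subalgebra $\Compact(\Ind_{P_1}^G\H_{\sigma_1})^{W_{\sigma_1}}$, which is strictly smaller when $W_{\sigma_1}$ is nontrivial. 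Consequently a unitary commuting with the left $C^*_r(G)$-action is not automatically fiberwise scalar: at a character $\varphi$ where $\Ind_{P_1}^G(\sigma_1\otimes\varphi)$ is reducible, the fiber commutant is $I(\sigma_1,\varphi)\supsetneq\C$. The argument the paper uses --- and the one you need --- is that $\Ind_{P_1}^G(\sigma_1\otimes\varphi)$ is irreducible for $\varphi$ in a dense open set (Bruhat for minimal parabolics, Harish-Chandra in general, or a consequence of Theorem~\ref{thm-hc-completeness}), so by Schur the self-intertwiner is scalar there, and then continuity of the field forces it to be a scalar-valued function everywhere, i.e.\ a central unitary in $C_b(\widehat A_{P_1})$. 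Your proposal omits the generic irreducibility input, which is what makes the uniqueness conclusion true.
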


\begin{proof}
Each Hilbert module $\Ind _{P_i}^G \H_{\sigma_i}$ can be seen as a continuous field of Hilbert spaces over $\widehat A_{P_i}$. The existence of the operators $U_w$ then follows from that of the so-called Knapp-Stein normalized intertwiners, constructed in \cite{KS2} and providing unitary equivalences between principal series representations $\Ind _{P_1}^G \sigma_1\otimes \varphi$ and $\Ind _{P_2}^G \sigma_2\otimes \operatorname{Ad}_w ^*\varphi$ (see for instance \cite[Proposition 8.5.(v)]{KS2}).

As for the uniqueness assertion, let $P$ be $P_1$ or $P_2$. There is a dense open set of characters  $\varphi \in \widehat A_P$ for which the unitary Hilbert space representation $\Ind_P^G H_{\sigma\otimes \varphi}$ is irreducible. This is due to Bruhat \cite{Bruhat} if $P$ is minimal and to Harish-Chandra \cite{HC3} in general. See also \cite{MR670190}. In any case it follows from the more advanced Theorem~\ref{thm-hc-completeness} below.   Given two $\Ad_w^*$-twisted unitary automorphisms, the composition of one with the inverse of the other is a unitary Hilbert module automorphism $\Ind _P^G \H_\sigma$ that intertwines the action of $C^*_r(G)$.   Again viewing the Hilbert module as a continuous field of Hilbert spaces over $\widehat A_P$, our unitary automorphism is then a continuous family of unitary self-intertwiners  of the representations $\Ind _P^G H_{\sigma\otimes \varphi}$.
\end{proof}

\begin{remark}
Another approach to Knapp-Stein theory in the context of Hilbert modules consists in constructing $w$-twisted unitary operators directly at the level of $\Csr(G/N)$. That point of view was adopted in \cite{PC*entrelacSL2} and \cite{MR3246931} where explicit unitary intertwiners were obtained in the case of the special linear group.
\end{remark}

Consider now a single  parabolic subgroup $P\subseteq G$ and associated Levi subgroup $L$, and form the  group
\begin{equation}
\label{eq-L-weyl-group}
W  =  N_K(L)   /  K\cap L  =  N_G(L)/L .
\end{equation}
It is a \emph{finite} group; see \cite[Chap. V]{Knapp1}.  Next, fix an irreducible square-integrable unitary representation $\sigma$ of the compactly generated part  $M$   of $L$.  The group $W$ acts as outer automorphisms of $M$, and hence it acts on the set of equivalence classes of representations of $M$.  We define $W_\sigma$ to be the isotropy group of the equivalence class of $\sigma$:
\[
W_\sigma = \{ w \in N_K(L) : \operatorname{Ad}_w ^* \sigma \simeq \sigma\, \} /  K\cap L .
\]
The group $W$, and hence in particular the subgroup $W_\sigma$ acts as a group of automorphisms of the vector group $A \subseteq L$.

According to Theorem~\ref{thm-intertwiner-existence} there is a  group extension
\[
1 \longrightarrow \operatorname{Inner}(\Ind _P^G  \H_\sigma)  \longrightarrow \widetilde W _\sigma \longrightarrow W _\sigma \longrightarrow 1.
\]
where
\begin{enumerate}[\rm (a)]
\item  $\operatorname{Inner}(\Ind _P^G  \H_\sigma)$ is the group of inner unitary automorphims of $\Ind _P^G  \H_\sigma$. It is isomorphic to the unitary group of the multiplier algebra of $C_0(\widehat A)$, or in other words the group of modulus-one continuous functions on $\widehat A$.
\item $\widetilde W_\sigma$ is the group of all twisted unitary  automorphisms of $\Ind _P^G  \H_\sigma$ associated with elements of the normalizer group $N_K(L)$ that fix $\sigma$ up to equivalence, as in Theorem~\ref{thm-intertwiner-existence}.
\end{enumerate}
In particular, the self-intertwiners associated to the elements of $W_\sigma$ by Theorem~\ref{thm-intertwiner-existence} yield a projective unitary  action in the sense of Definition \ref{def-proj-act} of $W_\sigma$ on $\Ind _P^G   \H_\sigma$, hence an action homomorphism of the form
\begin{equation}
\label{eq-sigma-series}
\Csr(G) \longrightarrow  \Compact   (\H_\sigma   )^{W_\sigma}.
\end{equation}
 We are going to show that this map is surjective.

 \begin{definition}
 \label{def-small-isotropy}
Given $\varphi\in \widehat A$, let 
\[
W_{\sigma, \varphi} = \{\,  w \in W_\sigma : \operatorname{Ad}^*_w (\varphi) = \varphi\, \} .
\]
\end{definition}

For each $w\in W_{\sigma,\varphi}$ the twisted unitary automorphism $U_w$ of $\Ind_P^G \H_\sigma$ appearing in Theorem \ref{thm-intertwiner-existence} restricts to an actual unitary automorphism $U_{w,\varphi}$ of the fiber $\Ind_P^G (\sigma\otimes \varphi)$. 

\begin{definition}
We denote  by  $I(\sigma, \varphi)$ the  finite-dimensional $C^*$-algebra of operators on the Hilbert space of the  principal series representation $\Ind_P^G ( \sigma\otimes \varphi)$ generated by the Knapp-Stein intertwiners $U_{w,\varphi}$ associated with the elements of the finite group $W_{\sigma, \varphi}$.
\end{definition}

We start by making note of the representation theory of the  fixed point algebra (compare the discussion  in \cite[5.4.13]{DixmierEnglish}).
 
\begin{lemma}
\label{lem-reps-of-kw}
Let  $\varphi\in \widehat A$  and  $p\in I(\sigma,\varphi)$ be a minimal projection. 
\begin{enumerate}[\rm (a)]
\item The $C^*$-algebra   $\Compact (\Ind _P^G  \H_\sigma)^{W_\sigma}$ is represented irreducibly on the range of $p$. 
\item  Every irreducible representation of  $\Compact (\Ind _P^G  \H_\sigma)^{W_\sigma}$ arises this way, up to unitary equivalence.  \item The representations associated to two minimal projections 
\[
p_1 \in I(\sigma, \varphi_1) \quad \text{and} \quad p_2 \in I(\sigma, \varphi_2)
\]
are equivalent if and only if there is some element $w\in W_\sigma$ such that $\operatorname{Ad}^*_w (\varphi_1) = \varphi_2$ and such that the projection $\operatorname{Ad}^*_w (p_1)\in I(\sigma,\varphi_2)$ is equivalent to $p_2$. \qed
\end{enumerate}
\end{lemma}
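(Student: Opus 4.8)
The plan is to exploit the fact that $\H_\sigma = C_0(\widehat A, H_\sigma)$ is a continuous field of Hilbert spaces over $\widehat A$, on which $W_\sigma$ acts projectively (covering the action on $\widehat A$), so that $\Compact(\Ind_P^G\H_\sigma)^{W_\sigma}$ is a fixed-point algebra of a $C^*$-bundle under a finite projective group action. For such algebras there is a completely standard description of the representation theory in terms of orbits and isotropy, which I would set up as follows. Fix $\varphi \in \widehat A$ with isotropy group $W_{\sigma,\varphi}$. The fiber of $\Ind_P^G\H_\sigma$ at $\varphi$ is the Hilbert space $\Ind_P^G(\sigma\otimes\varphi)$, and by Proposition~\ref{prop-cpt-iso2} and Lemma~\ref{lem-easy-cstar1} (or directly from the Morita equivalence with $C_0(\widehat A)$), evaluation at $\varphi$ identifies the ideal of operators supported near $\varphi$ with $\Compact(\Ind_P^G(\sigma\otimes\varphi))$. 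The group $W_{\sigma,\varphi}$ acts on this fiber through the Knapp-Stein unitaries $U_{w,\varphi}$, generating the finite-dimensional $C^*$-algebra $I(\sigma,\varphi)$, and the point is that an element of $\Compact(\Ind_P^G\H_\sigma)^{W_\sigma}$, evaluated at $\varphi$, must commute with all $U_{w,\varphi}$, i.e. lands in the commutant of $I(\sigma,\varphi)$ inside $\Compact(\Ind_P^G(\sigma\otimes\varphi))$.

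For part~(a): a minimal projection $p \in I(\sigma,\varphi)$ has range on which the commutant of $I(\sigma,\varphi)$ acts irreducibly (double commutant in a finite-dimensional setting), and one checks that every operator in $p\,\Compact(\Ind_P^G(\sigma\otimes\varphi))\,p$ commuting with $I(\sigma,\varphi)$ is attained by evaluating a suitable $W_\sigma$-invariant compact operator on $\Ind_P^G\H_\sigma$ at $\varphi$ — one builds such an invariant operator by averaging an operator supported in a small $W_{\sigma,\varphi}$-invariant neighbourhood of $\varphi$ over the full group $W_\sigma$, using a bump function, exactly as in the surjectivity argument in the proof of Proposition~\ref{prop-cpt-iso2}. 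This gives the irreducible representation of the fixed-point algebra on $p\H$. For part~(b): any irreducible representation $\pi$ of $\Compact(\Ind_P^G\H_\sigma)^{W_\sigma}$ restricts nontrivially to some ideal corresponding to a $W_\sigma$-orbit in $\widehat A$ (the ideals associated to disjoint invariant open sets are orthogonal, and they exhaust the algebra by an approximate-identity argument), hence factors through the fixed-point algebra of the bundle restricted to a single orbit $W_\sigma\cdot\varphi$; that algebra is Morita equivalent to $I(\sigma,\varphi)$ via the section module over the orbit, and the irreducible representations of a finite-dimensional $C^*$-algebra are exactly those on ranges of minimal projections, so $\pi$ is of the form in~(a). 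For part~(c): two minimal projections $p_i \in I(\sigma,\varphi_i)$ give equivalent representations iff they lie over the same $W_\sigma$-orbit (different orbits give representations that kill disjoint ideals, hence are inequivalent) and, once translated to a common fiber by some $w$ with $\operatorname{Ad}_w^*\varphi_1 = \varphi_2$, the transported projection $\operatorname{Ad}_w^*(p_1)$ is Murray-von Neumann equivalent to $p_2$ in $I(\sigma,\varphi_2)$; this is just the classification of irreducibles of the finite-dimensional algebra $I(\sigma,\varphi_2)$ together with $W_\sigma$-equivariance, and one has to note that $\operatorname{Ad}_w^*$ really does carry $I(\sigma,\varphi_1)$ onto $I(\sigma,\varphi_2)$, which follows from the cocycle relation among the Knapp-Stein intertwiners in Theorem~\ref{thm-intertwiner-existence}.

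The main obstacle is the localization/globalization step underlying all three parts: showing that an operator prescribed on a single fiber (or on a single orbit), commuting with the Knapp-Stein intertwiners there, extends to a genuine $W_\sigma$-invariant compact operator on the whole Hilbert module, and conversely that the fixed-point algebra "sees" each orbit through an ideal that is Morita equivalent to $I(\sigma,\varphi)$. This requires care with the continuity of the field and with the fact that the projective action of $W_\sigma$ on $\H_\sigma$ moves points of $\widehat A$, so one cannot simply work fiberwise; the averaging-with-a-bump-function technique from the proof of Proposition~\ref{prop-cpt-iso2} is the right tool, but one must check that the resulting operator is compact (it is, being a finite sum of averages of rank-one-in-each-fiber operators with compact support in $\widehat A$, by the same computation as in that proof) and that its value at $\varphi$ is the prescribed operator (which uses that the bump function is $1$ near $\varphi$ and that the non-identity elements of $W_\sigma\setminus W_{\sigma,\varphi}$ move $\varphi$ off the support). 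Once that bridge between the fiberwise finite-dimensional picture and the global fixed-point algebra is in place, parts~(a), (b), (c) are the routine representation theory of finite-dimensional $C^*$-algebras combined with the orbit decomposition, and I would present them fairly briskly.
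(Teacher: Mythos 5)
The paper supplies no proof of this lemma at all --- it is stated with an in-line \verb|\qed| and only the parenthetical remark to ``compare the discussion in \cite[5.4.13]{DixmierEnglish}'' --- so there is nothing in the paper to compare against. Your sketch is a reasonable way to fill that gap, and the key observations are the right ones: (i) evaluation at $\varphi$ of a $W_\sigma$-invariant compact operator lands in $I(\sigma,\varphi)'\cap\Compact\bigl(\Ind_P^G(\sigma\otimes\varphi)\bigr)$; (ii) the bump-function-and-average construction (borrowed from the surjectivity step of Proposition~\ref{prop-cpt-iso2}) shows that this evaluation map is onto; and (iii) writing $I(\sigma,\varphi)\cong\bigoplus_j M_{n_j}(\C)$ and $\Ind_P^G(\sigma\otimes\varphi)\cong\bigoplus_j\C^{n_j}\otimes K_j$, one has $I(\sigma,\varphi)'\cap\Compact=\bigoplus_j\C I_{n_j}\otimes\Compact(K_j)$, which acts irreducibly on the range $p\H\cong K_j$ of each minimal projection $p$ in the $j$th block, and whose irreducible representations are exactly these.

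The one place you should tighten is the localization step in part (b). The parenthetical ``the ideals associated to disjoint invariant open sets are orthogonal, and they exhaust the algebra by an approximate-identity argument'' does not by itself localize an irreducible representation to a single orbit: orbits in $\widehat A$ are generically not open, so they do not correspond to ideals, and exhausting $B$ by ideals over open sets does not make a given irreducible representation factor through an orbit. The clean mechanism is the $C_0(\widehat A/W_\sigma)$-algebra structure: $C_0(\widehat A/W_\sigma)$ sits centrally in the multiplier algebra of $\Compact(\Ind_P^G\H_\sigma)^{W_\sigma}$, so an irreducible representation restricts to a character of it, i.e.\ evaluation at some $[\varphi]$, and therefore factors through the fiber quotient at $[\varphi]$; by (i)--(ii) that fiber is precisely $I(\sigma,\varphi)'\cap\Compact\bigl(\Ind_P^G(\sigma\otimes\varphi)\bigr)$, whose irreducible representations are classified by (iii). (This also replaces the rather vague ``Morita equivalent to $I(\sigma,\varphi)$ via the section module over the orbit''; what is actually used is only that the fiber algebra and $I(\sigma,\varphi)$ have the same block structure.) With that fix, parts (a), (b), (c) go through exactly as you say; for (c) you are right that the cocycle relation makes $\Ad_{U_w}\colon I(\sigma,\varphi_1)\to I(\sigma,\varphi_2)$ well defined even though $U_w$ itself is determined only up to a central unitary.
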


The following result is known as Harish-Chandra's Completeness Theorem.

\begin{theorem}
\label{thm-hc-completeness}
\textup{\cite[Theorem 38.1]{HC3}} Let $\sigma$ be an irreducible square-integrable unitary representation representation of $M$, and let $\varphi$ be a unitary character of $A$.
The finite-dimensional $C^*$-algebra $I(\sigma, \varphi)$ is the full commutant of the parabolically induced representation  $\Ind_P^G ( \sigma\otimes \varphi)$.
\end{theorem}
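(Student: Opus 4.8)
This is Harish-Chandra's Completeness Theorem, one of the deep results of tempered harmonic analysis, and I would treat it as an input rather than reprove it; but it is worth recording the shape of the argument, if only to see why it cannot be extracted from the $C^*$-algebraic material developed above. One inclusion is elementary: applying Theorem~\ref{thm-intertwiner-existence} in the case $P_1=P_2=P$, $\sigma_1=\sigma_2=\sigma$, and restricting the resulting twisted automorphism to the fibre over $\varphi$, each normalized Knapp--Stein operator $U_{w,\varphi}$ with $w\in W_{\sigma,\varphi}$ commutes with the $G$-action on $\Ind_P^G(\sigma\otimes\varphi)$, since such a $w$ fixes both $\sigma$ (up to equivalence) and $\varphi$; hence $I(\sigma,\varphi)\subseteq\operatorname{End}_G(\Ind_P^G(\sigma\otimes\varphi))$. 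The representation $\Ind_P^G(\sigma\otimes\varphi)$ has finite length, so its commutant is a finite-dimensional $C^*$-algebra (and by Corollary~\ref{cor-compact-act} the image of $\Csr(G)$ in it acts through compact operators). With the easy inclusion in hand, the theorem reduces to the bound $\dim\operatorname{End}_G(\Ind_P^G(\sigma\otimes\varphi))\le\dim I(\sigma,\varphi)$.

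For that bound I would proceed in two stages. First, a crude estimate: Frobenius reciprocity for Casselman's constant-term functor identifies $\operatorname{End}_G(\Ind_P^G(\sigma\otimes\varphi))$ with $\operatorname{Hom}_L$ of the constant term of $\Ind_P^G(\sigma\otimes\varphi)$ along $P$ into $\sigma\otimes\varphi$; the geometric lemma (Bruhat decomposition) filters that constant term by $L$-subquotients of the form $w\cdot(\sigma\otimes\varphi)$, with $w$ ranging over the little Weyl group $W$, so its $(\sigma\otimes\varphi)$-isotypic part has $L$-length at most $|W_{\sigma,\varphi}|$, whence $\dim\operatorname{End}_G(\Ind_P^G(\sigma\otimes\varphi))\le|W_{\sigma,\varphi}|$. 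Second, one identifies the image precisely: under this identification the operators $U_{w,\varphi}$, $w\in W_{\sigma,\varphi}$, have nonzero images --- this is the point of the Knapp--Stein normalization --- but $U_{w,\varphi}$ is a scalar whenever $w$ lies in the subgroup $W^0_{\sigma,\varphi}$ generated by the reflections at which the relevant Plancherel factor vanishes, so the span of these images, namely $I(\sigma,\varphi)$, has dimension only $|R_{\sigma,\varphi}|$, where $R_{\sigma,\varphi}=W_{\sigma,\varphi}/W^0_{\sigma,\varphi}$.

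The assertion of the theorem is that the crude bound collapses exactly onto $I(\sigma,\varphi)$: no element of the commutant survives outside the span of the $U_{w,\varphi}$, and the $W^0_{\sigma,\varphi}$-directions contribute nothing new. This last step is the genuine obstacle, and it is precisely where Harish-Chandra's Maass--Selberg relations --- and, behind them, the full apparatus of Eisenstein integrals, $c$-functions and the Plancherel density --- must be invoked; it is for this reason that the result is quoted here rather than derived. An alternative route of comparable depth would read it off from the Plancherel theorem together with the completeness of wave packets recalled in the final section of the paper.
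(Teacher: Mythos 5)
Your proposal is correct and takes exactly the same approach as the paper: the paper's ``proof'' of this theorem is just the citation ``See \cite[Corollary 9.8]{KS2} and \cite[Theorem 14.31]{Knapp1},'' and you likewise identify it as a deep external input to be quoted rather than derived from the $C^*$-algebraic machinery developed here. Your supplementary sketch of the shape of the argument is sound, with the caveat that the ``constant-term functor / geometric lemma'' bound on the commutant dimension is phrased in $p$-adic language; the real-group incarnation of this step is Bruhat's distributional double-coset analysis (and Harish-Chandra's generalization), which you in any case correctly recognize as being refined to exactly $I(\sigma,\varphi)$ only via the Eisenstein-integral and Plancherel-density apparatus.
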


\begin{proof}
See \cite[Corollary 9.8]{KS2} and \cite[Theorem 14.31]{Knapp1}.
\end{proof}

 \begin{proposition}
 The action homomorphism
 \[
 \Csr(G) \longrightarrow  \Compact   (\Ind _P^G  \H_\sigma   )^{W_\sigma}
 \]
 is surjective, and therefore $C^*_r (G)_{P,\sigma} =\Compact   (\Ind _P^G  \H_\sigma   )^{W_\sigma}$.
 \end{proposition}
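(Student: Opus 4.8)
The plan is to prove surjectivity of the action homomorphism by a standard $C^*$-algebraic argument: a $C^*$-subalgebra $C \subseteq \Compact(\H)$ is all of $\Compact(\H)$ precisely when the inclusion is nondegenerate and $C$ acts irreducibly on each irreducible $\Compact(\H)$-module (equivalently, when $C$ separates the points of the spectrum of $\Compact(\H)$ and the induced representations stay irreducible). Concretely, I would appeal to the following elementary fact (a version of the density theorem, cf. \cite[Proposition 2.10.2 and Corollary 2.10.6]{DixmierEnglish}): if $C \subseteq B$ is an inclusion of $C^*$-algebras such that every irreducible representation of $B$ restricts to an irreducible representation of $C$, and distinct irreducible representations of $B$ restrict to inequivalent representations of $C$, then $C = B$, provided $C$ contains an approximate unit for $B$ (which is automatic here since $B = \Compact(\Ind_P^G\H_\sigma)$ and $C = C^*_r(G)_{P,\sigma}$ is by definition the image of $C^*_r(G)$, which acts nondegenerately on $\Ind_P^G \H_\sigma$).

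The key steps, in order, are as follows. First, identify the irreducible representations of $B = \Compact(\Ind_P^G\H_\sigma)^{W_\sigma}$: by Lemma~\ref{lem-reps-of-kw} these are exactly the representations on the range of a minimal projection $p \in I(\sigma,\varphi)$, as $\varphi$ ranges over $\widehat A$ and $p$ over minimal projections of $I(\sigma,\varphi)$, with the equivalence relation described in part~(c) of that lemma. Second, observe that the restriction of such a representation to $C = C^*_r(G)_{P,\sigma}$ is, by construction, a subrepresentation of the parabolically induced representation $\Ind_P^G(\sigma \otimes \varphi)$ cut down by $p$; by Harish-Chandra's Completeness Theorem (Theorem~\ref{thm-hc-completeness}), $I(\sigma,\varphi)$ is the full commutant of $\Ind_P^G(\sigma\otimes\varphi)$, so cutting by a minimal projection of the commutant yields an \emph{irreducible} representation of $C^*_r(G)$, hence an irreducible representation of $C$. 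This gives irreducibility of the restriction. Third, check that inequivalent irreducible representations of $B$ restrict to inequivalent representations of $C$: two such representations, coming from minimal projections $p_i \in I(\sigma,\varphi_i)$, restrict to the irreducible constituents of $\Ind_P^G(\sigma\otimes\varphi_i)$ selected by $p_i$; if these constituents were equivalent as representations of $C^*_r(G)$, then the Langlands Disjointness Theorem (Theorem~\ref{thm-Disjointness}) forces $\varphi_2 = \Ad_w^*\varphi_1$ for some $w$ conjugating the data, and tracking the intertwiner $U_w$ through shows $\Ad_w^*(p_1)$ is equivalent to $p_2$ in $I(\sigma,\varphi_2)$ — which is exactly the condition in Lemma~\ref{lem-reps-of-kw}(c) for the two $B$-representations to coincide. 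Finally, invoke the density criterion to conclude $C = B$, and hence $C^*_r(G)_{P,\sigma} = \Compact(\Ind_P^G\H_\sigma)^{W_\sigma}$ by Definition~\ref{def-component-alg}.

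The main obstacle I expect is the bookkeeping in the third step — matching up the equivalence relation on pairs $(\varphi, p)$ coming from the internal structure of $B$ (Lemma~\ref{lem-reps-of-kw}(c)) with the equivalence relation on irreducible constituents of principal series coming from the representation theory of $G$ (Theorem~\ref{thm-Disjointness}). One must verify that the twisted unitary isomorphism $U_w$ of Theorem~\ref{thm-intertwiner-existence}, which implements $\Ad_w^*$ on $\widehat A$, genuinely intertwines the $C^*_r(G)$-action fiberwise with the Knapp-Stein intertwiner $U_{w,\varphi}$ defining $I(\sigma,\varphi)$, so that "$\Ad_w^*(p_1)$ equivalent to $p_2$" is the same statement whether read inside $B$ or inside the commutant of a $G$-representation. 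This is essentially a compatibility-of-normalizations check, and it is where the precise formulation of Theorem~\ref{thm-intertwiner-existence} (that $U_w$ commutes with $C^*_r(G)$ and covers $\Ad_w^*$) does the work. The remaining verifications — nondegeneracy of the $C^*_r(G)$-action on $\Ind_P^G\H_\sigma$, and the fact that every $\varphi$ contributes (so no irreducible representation of $B$ is missed) — are routine given Corollary~\ref{cor-compact-act} and Lemma~\ref{lem-easy-cstar1}.
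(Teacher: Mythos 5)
Your proposal follows essentially the same route as the paper: Dixmier's Stone--Weierstrass criterion applied to the subalgebra $C^*_r(G)_{P,\sigma}\subseteq\Compact(\Ind_P^G\H_\sigma)^{W_\sigma}$, using Lemma~\ref{lem-reps-of-kw} to parametrize $\widehat B$, Harish-Chandra's Completeness Theorem for irreducibility of the restrictions, and Langlands Disjointness for inequivalence. Two small remarks: (i) in your first paragraph you write $B=\Compact(\Ind_P^G\H_\sigma)$ without the $W_\sigma$-invariants, which you then correct in the second paragraph; and (ii) the density criterion you invoke is valid because $B$ is postliminal (which follows from Lemma~\ref{lem-reps-of-kw} and is what lets one cite \cite[Theorem 11.1.8]{DixmierEnglish}) --- worth saying explicitly, since the Stone--Weierstrass statement you quote is not a theorem for arbitrary $C^*$-algebras. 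Your third step is somewhat more elaborate than necessary: by choosing one representative $\varphi$ per $W_\sigma$-orbit, the same-orbit-different-$\varphi$ case disappears and one only needs the two clauses (same $\varphi$, via Completeness; distinct orbits, via Disjointness), but your more careful bookkeeping with $U_w$ is a sound and in fact more self-contained way to see the same thing.
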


\begin{proof}
It is evident from Lemma~\ref{lem-reps-of-kw} that the $C^*$-algebra 
\[
B = \Compact (\Ind _P^G  \H_\sigma)^{W_\sigma}
\]
 is postliminal, meaning that  the  image of $B$ in any irreducible representation is the $C^*$-algebra of compact operators on the representation Hilbert space.  We may therefore invoke  Dixmier's version of the Stone-Weierstrass Theorem \cite[Theorem 11.1.8]{DixmierEnglish} to prove surjectivity.   According to Dixmier's theorem, it suffices to prove that the irreducible representations of  $B$ pull back to  irreducible representations of $C^*_r (G)$, and that inequivalent irreducible representations of $B$ pull back to inequivalent irreducible representations of $C^*_r (G)$.

It follows from Harish-Chandra's Completeness Theorem that every irreducible representation of $B$ does indeed pull back to an irreducible representation of $C^*_r(G)$, and that for a given unitary character $\varphi$ of $A$, the irreducible representations of $B$ associated to inequivalent minimal projections in $I(\sigma, \varphi)$ remain inequivalent when pulled back to $C^*_r(G)$.   Part (b) of Lemma~\ref{lem-reps-of-kw} ensures that irreducible representations of $B$ associated to unitary characters of $A$ in distinct $W_\sigma$ orbits remain  inequivalent when pulled back to $C^*_r (G)$.\end{proof}

The isomorphism of Proposition \ref{prop-reductive-cstar-structure1} can now be rephrased as follows:

  \begin{theorem}
\label{thm-reductive-cstar-structure2}
Let $G$ be a real reductive group.  The combined action homomorphism
\begin{equation}\label{eq-cstarG-structure}
\Csr(G)\longrightarrow \bigoplus _{[P,\sigma]} \Compact (\Ind _P ^G \H_{\sigma} )^{W_{\sigma}}
\end{equation}
is an isomorphism of $C^*$-algebras. \qed
\end{theorem}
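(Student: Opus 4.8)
The plan is to obtain the statement as an immediate consequence of the two preceding results, so that essentially no new work is required. First I would recall that Proposition~\ref{prop-reductive-cstar-structure1} already provides an isomorphism
\[
\Csr(G)\stackrel{\cong}{\longrightarrow}\bigoplus_{[P,\sigma]} C^*_r(G)_{P,\sigma},
\]
assembled from the quotient maps onto the component algebras, and that by Definition~\ref{def-component-alg} the summand $C^*_r(G)_{P,\sigma}$ is \emph{by construction} the image of $\Csr(G)$ inside $\Compact(\Ind_P^G\H_\sigma)$ under the action by compact operators furnished by Corollary~\ref{cor-compact-act}.

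Next I would invoke the Proposition proved just above, which asserts that the action homomorphism $\Csr(G)\to\Compact(\Ind_P^G\H_\sigma)^{W_\sigma}$ is surjective, and hence that $C^*_r(G)_{P,\sigma}=\Compact(\Ind_P^G\H_\sigma)^{W_\sigma}$. Substituting this identification into the isomorphism of Proposition~\ref{prop-reductive-cstar-structure1}, one summand at a time, turns the combined quotient map into the combined action homomorphism \eqref{eq-cstarG-structure} and shows that the latter is an isomorphism of $C^*$-algebras.

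The one point I would take care to spell out is that \eqref{eq-cstarG-structure} genuinely takes values in the $C^*$-algebraic direct sum, that is, in families $(a_\sigma)$ with $\|a_\sigma\|\to 0$, rather than in the full product. This was already recorded in Remark~\ref{rem-dir-sum}, and it ultimately rests on the fact, proved just before that remark, that $\Csr(G)$ acts through compact operators on the orthogonal direct sum Hilbert module $\bigoplus_{[P,\sigma]}\Ind_P^G\H_\sigma$, which in turn follows from Corollary~\ref{cor-unif-adm} (uniform admissibility) together with Lemma~\ref{lem-cpt-tensor-one}.

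I do not anticipate any real obstacle: all of the substantive input — the decomposition of $\Csr(G)$ into components carried out in Section~\ref{sec-decomp}, and the identification of each component with a fixed-point $C^*$-algebra of compact operators on a parabolically induced Hilbert module — has already been established. The theorem is a bookkeeping reformulation, which is why the statement in the excerpt simply carries a \qed.
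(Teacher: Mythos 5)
Your proposal is correct and matches the paper's own argument: the theorem is stated there with an immediate \qed, preceded by the remark that ``the isomorphism of Proposition~\ref{prop-reductive-cstar-structure1} can now be rephrased as follows,'' exactly because the unnumbered Proposition just above identifies each component algebra $C^*_r(G)_{P,\sigma}$ with $\Compact(\Ind_P^G\H_\sigma)^{W_\sigma}$. Your extra care about landing in the $C^*$-algebraic direct sum (Remark~\ref{rem-dir-sum}) is a sound point, though it was already absorbed into Proposition~\ref{prop-reductive-cstar-structure1}.
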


\begin{remark} 
\label{rem-wass}
A refined version of this  decomposition   appears in  Wassermann's short note \cite{NoteWassermann}. It incorporates additional information, due to Knapp and Stein, about the structure of the stabilizers $W_\sigma$ (see \cite{MR0422512}, \cite{KS2}, and \cite[XIV \S 9]{Knapp1}), who showed that the group $W_{\sigma,\varphi}$ admits a semidirect product decomposition
\[
W_{\sigma,\varphi}=W'_{\sigma,\varphi}\rtimes R_{\sigma,\varphi},
\]
 in which the factor $R_{\sigma,\varphi}$, called the \emph{$R$-group} attached to $(\sigma,\varphi)$, consists of those elements that actually contribute nontrivially to the intertwining algebra of $\Ind_P^G ( \sigma\otimes \varphi)$. The full group  $W_\sigma$ can also be written as semidirect product 
 \begin{equation*}
 \label{eq-Wsemidir}
 W_\sigma=W'_\sigma\rtimes R_\sigma ,
 \end{equation*}
and Wassermann notes that 
\[
\Compact (\Ind _P ^G \H_{\sigma} )^{W_{\sigma}}\underset{\text{Morita}}{\sim}C_0(\widehat{A}_P/W'_\sigma)\rtimes R_\sigma.
\]
This is needed  in   the computation of the $K$-theory of $\Csr(G)$, which was Wassermann's main concern. See \cite[\S 4]{MR1292018} for an account of the $K$-theoretic aspects of $C^*_r (G)$ (the Connes-Kasparov and Baum-Connes conjectures).
The theory of the $R$-group will not be required for our purposes. The decomposition of Theorem \ref{thm-reductive-cstar-structure2}  will be  enough.
\end{remark}

\begin{example}
The structure of the reduced $\Cs$-algebra of real-rank one groups was elucidated in  \cite{MR0476913}. Let us consider the basic case of $G=\mathrm{SL}(2,\R)$ (see \cite[\S4]{MR0476913} and \cite[p.256]{MR1292018}).

Up to association there are two parabolic subgroups---the group $G$ itself and the group   $P$ of upper triangular matrices.  The group $M$ in the decomposition $P=MAN$ consists of $\pm I$, while $A$ is isomorphic to $\R_+$.
The Weyl group consists of only two elements, both of which fix each of the two representations $\sigma_0$ and $\sigma_1$ of $M$ (the first being the trivial representation).

According to Theorem \ref{thm-reductive-cstar-structure2} the reduced $\Cs$-algebra decomposes as
\[
\Csr(G)\cong  \bigoplus _{[\pi]} \Compact (H_\pi)  \oplus C_0\bigl (\R, \Compact(H_{\sigma_0})\bigr )^{\mathbb Z_2} 
\oplus  C_0\bigl (\R, \Compact(H_{\sigma_1})\bigr )^{\mathbb Z_2} ,
\]
where the first direct sum   is indexed by the discrete series of $G$.  
\end{example}

\begin{remark}
A more refined analysis, as in Remark~\ref{rem-wass}, shows that 
\[
 C_0\bigl (\R, \Compact(H_{\sigma_0})\bigr )^{\mathbb Z_2}
 \underset{\text{Morita}}{\sim}
 C_0(\R/\Z_2)
 \]
 and
 \[
 C_0\bigl (\R, \Compact(H_{\sigma_1})\bigr )^{\mathbb Z_2}
 \underset{\text{Morita}}{\sim}C_0(\R)\rtimes\Z_2.\]
\end{remark}

\section{Structure of the Universal Principal Series}
\label{sec-bimodule-structure}

In this section we shall work with a fixed standard parabolic subgroup 
\begin{equation}
\label{eq-parabolic}
P=LN  
\end{equation}
of a real reductive group $G$.  We shall determine the structure of the correspondence $C^*_r (G/N)$ using the information about reduced group  $C^*$-algebras that we presented in   Sections~\ref{sec-decomp} and \ref{sec-group-alg-structure}.  

Applying the results in those sections to the real reductive group $L$ rather than $G$, we find that the action of $L$ on its principal series representations determines an isomorphism
\begin{equation}
\label{eq-structure-of-L}
C^*_r (L)\stackrel \cong\longrightarrow   \bigoplus _{[ Q,\sigma]}  
 \Compact  \bigl (\Ind _Q ^L \H_\sigma    \bigr )^{W_{\sigma}} .
\end{equation}
To review,  the sum is over associate classes of pairs $(Q,\sigma)$ consisting of a standard parabolic subgroup $Q\subseteq L$ and an irreducible, square-integrable representation of the compactly generated part of the Levi factor of $Q$.    

As we indicated in (\ref{eq-dir-sum-decomp-E1}), the above direct sum decomposition of $C^*_r(L)$  gives rise to a direct sum decomposition of any Hilbert $C^*_r(L)$-module, and our first task is to understand these summands in the case of $C^*_r (G/N)$.
According to (\ref{eq-dir-sum-decomp-E2}),  if $\E$ is any Hilbert $C^*_r (L)$-module, then the summands have the form
\begin{equation}
\label{eq-E-summands}
\Compact \bigl ( \Ind_Q^L \H_\sigma, \E\otimes _{C^*_r(L)} \Ind_Q^L \H_\sigma\bigr)^{W_\sigma } ,
\end{equation}
so we shall need to compute the tensor product $C^*_r (G/N)\otimes _{C^*_r (L)} \Ind_Q^L  \H_\sigma$.

To this end, we shall first make explicit what we mean by \emph{standard} in the context of the group $L$.  If $A$ is the given maximal abelian positive-definite subgroup of $G$, then $A\cap L$ is a maximal abelian positive-definite subgroup of $L$.  Similarly if $\mathfrak a_{+}$ is  a chamber defining a family of standard parabolic subgroups for $G$, then $\mathfrak a_{+}\cap \mathfrak l$ is contained in a chamber defining a family of standard parabolic subgroups for $L$.  We shall make compatible choices in this way.
Having made these choices, it follows from \cite[Lemma 4.1.17]{MR632407} that:

\begin{lemma}
\label{lem-para-para}
If $P=LN_P$, as in   \textup{(\ref{eq-parabolic})}, and if   
 $Q$ is any standard parabolic subgroup of $L$, then the product $QN_P$ is a standard parabolic subgroup of $G$.
Moreover, if we denote by $N_Q$ the unipotent radical of the standard parabolic subgroup $Q\subseteq L$, then the unipotent radical of $QN_P$ is $N_QN_P$. 
\end{lemma}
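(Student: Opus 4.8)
The plan is to reduce the statement to the combinatorics of the restricted root system of $(\mathfrak g,\mathfrak a)$; this is essentially \cite[Lemma 4.1.17]{MR632407}, which is stated there for reductive $p$-adic groups but whose proof is purely root-theoretic and transfers verbatim. First I would recall the dictionary between standard parabolic subgroups and subsets of simple restricted roots. Let $\Sigma=\Sigma(\mathfrak g,\mathfrak a)$ be the system of restricted roots, let $\Sigma^{+}$ be the positive system cut out by the chamber $\mathfrak a_{+}$, and let $\Delta\subseteq\Sigma^{+}$ be the simple roots. For $X\in\overline{\mathfrak a_{+}}$ set $S(X)=\{\alpha\in\Delta:\alpha(X)=0\}$ and $[S]:=\Sigma\cap\Z S$. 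Then, directly from the definitions recalled in Section~\ref{sec-red-groups},
\[
\mathfrak l_{X}=\mathfrak z_{\mathfrak g}(\mathfrak a)\oplus\bigoplus_{\alpha\in[S(X)]}\mathfrak g_{\alpha},
\qquad
\mathfrak n_{X}=\bigoplus_{\alpha\in\Sigma^{+}\setminus[S(X)]}\mathfrak g_{\alpha},
\]
and $X\mapsto S(X)$ induces a bijection from the standard parabolic subgroups of $G$ onto the subsets of $\Delta$.

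Next I would describe $L$ through the same dictionary, using the compatible choices. Since $X_{0}\in\mathfrak a$ centralises $A$ we have $A\subseteq L$, so $A\cap L=A$, and writing $P=P_{X_{0}}$, $S_{0}=S(X_{0})$ one checks that the restricted root system of $(\mathfrak l,\mathfrak a)$ is $\Sigma_{L}=[S_{0}]$, with positive system $\Sigma_{L}^{+}=[S_{0}]\cap\Sigma^{+}$ (the one determined by the $L$-chamber containing $\mathfrak a_{+}$) and with simple roots exactly $S_{0}$. Hence the standard parabolic $Q\subseteq L$ corresponds to a subset $S_{1}\subseteq S_{0}$, and $\mathfrak l_{Q}=\mathfrak z_{\mathfrak g}(\mathfrak a)\oplus\bigoplus_{\alpha\in[S_{1}]}\mathfrak g_{\alpha}$, $\mathfrak n_{Q}=\bigoplus_{\alpha\in\Sigma_{L}^{+}\setminus[S_{1}]}\mathfrak g_{\alpha}$.

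The comparison is then a bookkeeping step. Because $S_{1}\subseteq S_{0}$ one has the disjoint decomposition $\Sigma^{+}\setminus[S_{1}]=(\Sigma^{+}\setminus[S_{0}])\sqcup(\Sigma_{L}^{+}\setminus[S_{1}])$, which identifies $\mathfrak q+\mathfrak n_{P}$ with $\mathfrak z_{\mathfrak g}(\mathfrak a)\oplus\bigoplus_{\alpha\in[S_{1}]}\mathfrak g_{\alpha}\oplus\bigoplus_{\alpha\in\Sigma^{+}\setminus[S_{1}]}\mathfrak g_{\alpha}$; that is, $\mathfrak q+\mathfrak n_{P}=\mathfrak l_{X_{1}}\oplus\mathfrak n_{X_{1}}$ for any $X_{1}\in\overline{\mathfrak a_{+}}$ with $S(X_{1})=S_{1}$, with moreover $\mathfrak l_{X_{1}}=\mathfrak l_{Q}$ and $\mathfrak n_{X_{1}}=\mathfrak n_{Q}\oplus\mathfrak n_{P}$. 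Such an $X_{1}$ exists --- one may take $X_{1}=X_{0}+\varepsilon Z$ with $Z$ in the face of $\overline{\mathfrak a_{+}}$ (relative to $L$) that vanishes exactly on $S_{1}$ and $\varepsilon>0$ small --- and the same $X_{1}$ defines $Q$ inside $L$.

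Finally I would pass from Lie algebras to groups. On the unipotent side, $N_{Q}\subseteq L$ normalises $N_{P}$ and both are closed, so the product map $N_{Q}\times N_{P}\to G$ is a diffeomorphism onto the closed connected subgroup $N_{Q}N_{P}$, which has Lie algebra $\mathfrak n_{Q}\oplus\mathfrak n_{P}=\mathfrak n_{X_{1}}$ and is contracted to the identity by $\Ad(\exp(tX_{1}))$ as $t\to+\infty$; being connected of full dimension it equals $N_{X_{1}}$. On the Levi side the point is the inclusion $L_{X_{1}}\subseteq L$: writing $L_{X}$ as the centraliser in $G$ of its positive-definite central subgroup $A_{X}$, one has $A_{X_{0}}\subseteq A_{X_{1}}$ whenever $S(X_{1})\subseteq S(X_{0})$, so $L_{X_{1}}=Z_{G}(A_{X_{1}})\subseteq Z_{G}(A_{X_{0}})=L$, and then $L_{X_{1}}=L_{X_{1}}\cap L=Z_{L}(A_{X_{1}})$ is the Levi factor of $Q$. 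Combining, $QN_{P}=L_{Q}N_{Q}N_{P}=L_{X_{1}}N_{X_{1}}=P_{X_{1}}$ is the standard parabolic subgroup of $G$ attached to $S_{1}$, with unipotent radical $N_{Q}N_{P}$. The hard part will be exactly this last passage from Lie algebras to groups --- in particular the group-level identity $L_{X_{1}}=L_{Q}$, where one must be careful about the possible disconnectedness of the $L_{X}$; that is precisely the technical content of \cite[Lemma 4.1.17]{MR632407}, and is the only real obstacle, the remainder being the routine root-space bookkeeping above.
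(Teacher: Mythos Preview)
Your proposal is correct, and it aligns with the paper's treatment: the paper does not give its own proof of this lemma at all but simply cites \cite[Lemma 4.1.17]{MR632407}, exactly the reference you invoke. You have gone further by actually unpacking the root-theoretic argument behind that reference --- the bijection between subsets $S\subseteq\Delta$ and standard parabolics, the identification of $\Sigma_L$ with $[S_0]$ and of standard parabolics of $L$ with subsets $S_1\subseteq S_0$, and the decomposition $\Sigma^+\setminus[S_1]=(\Sigma^+\setminus[S_0])\sqcup(\Sigma_L^+\setminus[S_1])$ --- and then handling the passage from Lie algebras to groups via centralisers of the split centres $A_X$. This is precisely the content of the cited lemma, so there is no meaningful difference in approach; you have simply written out what the paper leaves to the reference.
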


Define a convolution product 
\begin{equation}
\label{eq-ind-in-stages}
C_c ^\infty (G/N_P) \otimes C_c^\infty (L/N_Q) \longrightarrow C_c^\infty (G/N_PN_Q)
\end{equation}
 by
\[
\begin{aligned}
( f_1  * f _2) (x)  &  = \int _L \delta (\ell) ^{-\frac 12} f_1(x\ell^{-1}N_P) f_2 (\ell N_Q ) \, d\ell  \\
	& =  \int _L \delta (\ell) ^{\frac 12} f_1(x\ell N_P) f_2 (\ell^{-1} N_Q) \, d\ell,\\
	\end{aligned}
\]
where $\delta=\delta^G_P$ is the character \eqref{eq-delta-def} involved in the definition of $\Csr(G/N_P)$.
This product factors through the tensor product over $C_c^\infty (L)$ and has dense range. Moreover, denoting the Levi component of $Q$ by $J$ so that $Q=JN_Q$ and $QN_P=JN_QN_P$, it is compatible with the $C_c^\infty(J)$-valued inner products, as follows:

\begin{lemma} \textup{(Induction in Stages.)}
The convolution product \textup{(\ref{eq-ind-in-stages})} determines a unitary isomorphism of Hilbert $C^*_r (J)$-modules 
\[
C^*_r (G/N_P)\otimes _{C^*_r (L)} C^*_r (L/N_Q) \stackrel \cong \longrightarrow C^*_r (G/N_PN_Q).
\]
that is compatible with the left actions of $C^*_r (G)$ on both sides. \qed
\end{lemma}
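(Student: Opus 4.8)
The plan is to verify directly that the convolution product \eqref{eq-ind-in-stages} is isometric for the relevant inner products, that it has dense range, and that it intertwines the left $C^*_r(G)$-actions; the claimed isomorphism then follows by continuity. First I would work entirely at the level of smooth compactly supported functions, so that all the integrals below converge without difficulty. Given $f_1, f_1' \in C_c^\infty(G/N_P)$ and $f_2, f_2' \in C_c^\infty(L/N_Q)$, I would compute the $C_c^\infty(J)$-valued inner product $\langle f_1 * f_2, f_1' * f_2'\rangle$ in $C_c^\infty(G/N_PN_Q)$ using the inner product formula recalled in Section~\ref{sec-parabolic-induction} (applied to the parabolic $QN_P$ of $G$, whose unipotent radical is $N_QN_P$ by Lemma~\ref{lem-para-para}, and whose Levi factor is $J$), and compare it with the inner product on the interior tensor product $C^*_r(G/N_P)\otimes_{C^*_r(L)} C^*_r(L/N_Q)$, namely $\bigl\langle f_2, \langle f_1, f_1'\rangle_{C^*_r(L)}\, f_2'\bigr\rangle_{C^*_r(J)}$. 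The key bookkeeping point is that the modular character $\delta^G_{QN_P}$ of the big parabolic factors as the product $\delta^G_P \cdot \delta^L_Q$ of the modular characters for the two stages; this is exactly what makes the $\delta^{\pm 1/2}$ normalizations match up on the two sides.

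Concretely, on the tensor-product side one has
\[
\bigl\langle f_2, \langle f_1, f_1'\rangle_{C^*_r(L)}\, f_2'\bigr\rangle_{C^*_r(J)}(j)
= \delta^L_Q(j)^{1/2}\int_{L/N_Q}\overline{f_2(y)}\,\bigl(\langle f_1,f_1'\rangle_{C^*_r(L)} f_2'\bigr)(yj)\,dy,
\]
and expanding $\langle f_1, f_1'\rangle_{C^*_r(L)}$ acting by convolution through the $C^*_r(L)$-action on $C^*_r(L/N_Q)$ produces a triple integral over $L/N_Q$, $L$, and $G/N_P$. On the other side, $\langle f_1 * f_2, f_1' * f_2'\rangle(j)$ expands, via the definition \eqref{eq-ind-in-stages} and the inner-product formula for $C^*_r(G/N_PN_Q)$, into an integral over $G/N_PN_Q$ together with two copies of an integral over $L$. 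Using $G$-invariance of the measure on $G/N_P$ and the disintegration $\int_{G/N_PN_Q} = \int_{G/N_P}\int_{L/N_Q}$ (with the measures normalized compatibly, which one is free to arrange), I would collapse one of the $L$-integrals by a change of variables and check that the two expressions agree term by term; the powers of $\delta^G_P$ and $\delta^L_Q$ recombine into $\delta^G_{QN_P}$ precisely as needed. Density of the range is immediate since $C_c^\infty(G/N_P)*C_c^\infty(L/N_Q)$ spans a dense subspace of $C_c^\infty(G/N_PN_Q)$ in the inductive-limit topology, hence a dense subspace of the completion; and $C^*_r(G)$-equivariance is clear because left convolution by $C_c^\infty(G)$ commutes with the right-hand convolution in \eqref{eq-ind-in-stages} by associativity of convolution. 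Finally, an isometric map with dense range between Hilbert modules extends to a unitary isomorphism, completing the proof.

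The main obstacle I anticipate is purely computational rather than conceptual: keeping the two integrals over $L$ (and the attendant $\delta^{1/2}$ factors from the left and right normalized actions) correctly aligned through the change of variables, and in particular pinning down the compatible normalization of the $G$-invariant measures on $G/N_P$, $L/N_Q$, and $G/N_PN_Q$ so that the disintegration identity holds on the nose. Once the measures are fixed consistently and the factorization $\delta^G_{QN_P} = \delta^G_P\cdot\delta^L_Q$ is in hand (which follows from $\mathfrak{n}_{QN_P} = \mathfrak{n}_Q \oplus \mathfrak{n}_P$ as an $L$-module together with \eqref{eq-delta-def}), the verification is a routine Fubini-and-substitute argument with no hidden analytic subtleties, thanks to the smooth compactly supported setting.
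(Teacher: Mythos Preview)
The paper does not actually prove this lemma: the \verb|\qed| at the end of the statement signals that it is asserted without proof, and the following remark simply points to Rieffel's general induction-in-stages theorem and to Wallach for the underlying representation-theoretic identity. So there is no ``paper's own proof'' to compare against; your proposal supplies a direct verification where the authors chose to invoke standard references.

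Your strategy is the correct one and would succeed. Checking that the convolution map preserves the $C_c^\infty(J)$-valued inner products, has dense range, and intertwines the left $C_c^\infty(G)$-action is exactly what is required, and your identification of the key algebraic input---the factorization $\delta^G_{QN_P}\big|_J = \delta^G_P\big|_J \cdot \delta^L_Q$ coming from $\mathfrak n_{QN_P} = \mathfrak n_P \oplus \mathfrak n_Q$ as $J$-modules---is the essential point that makes the normalizations match. One small caution: your phrase ``the disintegration $\int_{G/N_PN_Q} = \int_{G/N_P}\int_{L/N_Q}$'' is not quite the right shape. What you actually use is the relation between integration over $G/N_P$ and integration over $G/N_PN_Q$ coming from the fibration $G/N_P \to G/N_PN_Q$ with fibre $N_Q$ (since $N_Q\cap N_P=\{e\}$), together with the decomposition of the $L$-integral in the convolution as an integral over $J$ times an integral over $N_Q$. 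Once you write the measures this way the Fubini-and-substitute argument goes through exactly as you describe. This is cosmetic, not a gap.
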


\begin{remark}
Induction in stages for representations of \Cs-algebras was described in the original work of Rieffel \cite[Theorems 5.9 and 5.11]{RieffelIRC*A}.
The lemma above reflects relations amongst successively induced principal series representations that can be found in \cite[Proposition 4.1.18]{MR632407}.
\end{remark}

From this it is easy to compute the Hilbert module tensor product that appears in (\ref{eq-E-summands}).

\begin{corollary}
\label{cor-ind-stages}
Let $Q$ be a standard parabolic subgroup of $L$ with Levi factor $J$, and let $\sigma$  be an irreducible, square-integrable representation of the compactly generated part of $J$.
There is  a unitary isomorphism of Hilbert $C^*_r (J)$-modules 
\[
C^*_r (G/N)\otimes _{C^*_r (L)} \Ind_Q^L  \H_\sigma \stackrel \cong \longrightarrow \Ind_{QN}^G \H_\sigma.
\]
compatible with the left actions of $C^*_r (G)$.
\end{corollary}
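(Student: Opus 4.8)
The plan is to deduce the corollary directly from the preceding Induction in Stages lemma together with the definition of the parabolically induced Hilbert module. First I would recall that by Lemma~\ref{lem-para-para}, since $Q=JN_Q$ is a standard parabolic subgroup of $L$, the product $QN$ is a standard parabolic subgroup of $G$ with Levi factor $J$ and unipotent radical $N_QN$, so that the objects $\Ind_{QN}^G\H_\sigma = C^*_r(G/N_QN)\otimes_{C^*_r(J)}\H_\sigma$ and $\Ind_Q^L\H_\sigma = C^*_r(L/N_Q)\otimes_{C^*_r(J)}\H_\sigma$ make sense and are Hilbert $C_0(\widehat{A}_J)$-modules carrying compatible left $C^*_r(G)$- (resp.\ $C^*_r(L)$-) actions.

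Next I would write out the chain of unitary isomorphisms of Hilbert $C^*_r(J)$-modules:
\[
C^*_r(G/N)\otimes_{C^*_r(L)}\Ind_Q^L\H_\sigma
= C^*_r(G/N)\otimes_{C^*_r(L)}\bigl(C^*_r(L/N_Q)\otimes_{C^*_r(J)}\H_\sigma\bigr),
\]
then use associativity of the interior tensor product (see \cite[Proposition~4.5]{Lance}) to rewrite this as
\[
\bigl(C^*_r(G/N)\otimes_{C^*_r(L)}C^*_r(L/N_Q)\bigr)\otimes_{C^*_r(J)}\H_\sigma,
\]
and finally apply the Induction in Stages lemma to the first tensor factor to identify it with $C^*_r(G/N_QN)\otimes_{C^*_r(J)}\H_\sigma = \Ind_{QN}^G\H_\sigma$. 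Throughout, the left action of $C^*_r(G)$ is carried along by each isomorphism, because associativity of the tensor product is $C^*_r(G)$-linear and the Induction in Stages isomorphism is stated to be $C^*_r(G)$-compatible.

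The only genuine point requiring care is bookkeeping of which groups play which roles: one must check that the measure-normalizing character $\delta^G_P$ appearing in $C^*_r(G/N)$ and the characters $\delta^L_Q$, $\delta^G_{QN}$ are consistent under the identification $N_{QN} = N_QN$, i.e.\ that $\delta^G_{QN}|_J = \delta^L_Q|_J\cdot(\delta^G_P|_J)$ in the appropriate sense. This is the content of \eqref{eq-delta-def} applied to the three parabolics together with the direct-sum decomposition $\mathfrak{n}_{QN} = \mathfrak{n}_Q\oplus\mathfrak{n}_P$ as a $J$-module, and it is exactly what makes the convolution product \eqref{eq-ind-in-stages} land in $C_c^\infty(G/N_PN_Q)$ with the correct homogeneity; so in fact this compatibility is already subsumed in the statement of the Induction in Stages lemma. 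Hence no essential obstacle remains: the corollary is a formal consequence of associativity of the interior tensor product and the lemma, and the proof amounts to writing the displayed chain of isomorphisms and noting the $C^*_r(G)$-equivariance at each step.
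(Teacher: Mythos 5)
Your proof is correct and matches the paper's own argument, which simply observes that the corollary follows from the Induction in Stages lemma together with the identity $\Ind_Q^L\H_\sigma = C^*_r(L/N_Q)\otimes_{C^*_r(J)}\H_\sigma$. You have merely spelled out the implicit associativity-of-tensor-product step and the $\delta$-character bookkeeping, both of which the paper leaves to the reader.
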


\begin{proof} This is an immediate consequence of the above lemma and the fact that 
$  \Ind_Q^L  \H_\sigma  = C^*_r (L/N_Q)\otimes _{C^*_r (J)} \H_\sigma$.
\end{proof}

Applying the results of Section~\ref{sec-hilb-mod} we obtain the following description of the $C^*$-algebraic universal principal series.

\begin{theorem}
\label{thm-decomp-module}
There is a unitary  isomorphism of Hilbert modules
\[
C^*_r (G/N)  \stackrel\cong   \longrightarrow 
\bigoplus _{[ Q, \sigma]} 
 \Compact  \bigl ( \Ind_Q^L\H_\sigma, \Ind_{QN}^G \H_\sigma    \bigr )^{W_{\sigma}}
\]
with the following properties:
\begin{enumerate}[\rm (a)]
\item The left action of $C^*_r (G)$ on $C^*_r (G/N)$ corresponds under the isomorphism to the left action of $C^*_r (G)$ on each principal series Hilbert module $\Ind _{QN}^G \H_\sigma$.
\item The right action of $C^*_r (L)$ on  $C^*_r (G/N)$ corresponds under the isomorphism to the left action of $C^*_r (L)$ on each principal series Hilbert module $\Ind _Q^L \H_\sigma$.
\item The summands on the right hand side are orthogonal to one another, and the Hilbert module inner product on any one of them is 
\[
\langle S, T\rangle = S^*T,
\]
where the operator $S^*T \in \Compact (\Ind _Q^L \H_\sigma )^{W_\sigma}$ is to be viewed as an element of $C^*_r (L)$ via the isomorphism \textup{(\ref{eq-structure-of-L})}. \qed
\end{enumerate}
\end{theorem}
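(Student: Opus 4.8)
The plan is to derive the asserted isomorphism by feeding the structure theorem for $C^*_r(L)$ into the direct-sum formalism of Section~\ref{sec-hilb-mod} and then rewriting the resulting tensor products via induction in stages. First I would regard $C^*_r(G/N)$ as a Hilbert $C^*_r(L)$-module and invoke the isomorphism \textup{(\ref{eq-structure-of-L})}, which presents $C^*_r(L)$ as a $C^*$-algebraic direct sum $\bigoplus_{[Q,\sigma]}\Compact(\Ind_Q^L\H_\sigma)^{W_\sigma}$ of algebras of exactly the type treated in the Direct Sums subsection. Applying \textup{(\ref{eq-dir-sum-decomp-E2})} with $\E=C^*_r(G/N)$, $B=C^*_r(L)$, $\H_\alpha=\Ind_Q^L\H_\sigma$ and $W_\alpha=W_\sigma$ yields a unitary isomorphism of Hilbert $C^*_r(L)$-modules
\[
C^*_r(G/N)\ \stackrel\cong\longrightarrow\ \bigoplus_{[Q,\sigma]}\Compact\bigl(\Ind_Q^L\H_\sigma,\ C^*_r(G/N)\otimes_{C^*_r(L)}\Ind_Q^L\H_\sigma\bigr)^{W_\sigma}.
\]
Then I would apply Corollary~\ref{cor-ind-stages} to identify $C^*_r(G/N)\otimes_{C^*_r(L)}\Ind_Q^L\H_\sigma$ with $\Ind_{QN}^G\H_\sigma$, transporting along this unitary isomorphism the projective $W_\sigma$-action of Lemma~\ref{lem-W-action-def}, which acts only on the second tensor factor $\Ind_Q^L\H_\sigma$. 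Substituting gives the isomorphism claimed in the theorem.

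It then remains to check the three bookkeeping properties, all of which are built into the formalism just used. Property (c) is immediate: orthogonality of distinct summands and the inner-product formula $\langle S,T\rangle=S^*T$ are part of the direct-sum construction underlying \textup{(\ref{eq-dir-sum-decomp-E2})}, and the identification of $\Compact(\Ind_Q^L\H_\sigma)^{W_\sigma}$ with the $[Q,\sigma]$-summand of $C^*_r(L)$ is precisely \textup{(\ref{eq-structure-of-L})}. Property (a) holds because the isomorphism of Proposition~\ref{prop-module-structure} (the map $e\mapsto T_e$ with $T_e(v)=e\otimes v$) intertwines any left action on $\E$ — here the left action of $C^*_r(G)$ on $C^*_r(G/N)$ — with post-composition by the induced action on $\E\otimes_{C^*_r(L)}\Ind_Q^L\H_\sigma$; since $C^*_r(G)$ acts on the $C^*_r(G/N)$-factor it commutes with the $W_\sigma$-action on that tensor product and so lands in the fixed-point algebra, and by the $C^*_r(G)$-compatibility asserted in Corollary~\ref{cor-ind-stages} this is exactly the left action of $C^*_r(G)$ on $\Ind_{QN}^G\H_\sigma$. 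Property (b) follows from the way the right $C^*_r(L)$-action on $\E$ passes through \textup{(\ref{eq-dir-sum-decomp-E2})}: on the $[Q,\sigma]$-summand $C^*_r(L)$ acts through its quotient $\Compact(\Ind_Q^L\H_\sigma)^{W_\sigma}$ by precomposition, which, read through \textup{(\ref{eq-structure-of-L})}, is the left action of $C^*_r(L)$ on $\Ind_Q^L\H_\sigma$.

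The substance of the theorem lies entirely in its two inputs — the structure theorem \textup{(\ref{eq-structure-of-L})} for $C^*_r(L)$ and the induction-in-stages isomorphism of Corollary~\ref{cor-ind-stages} — so the proof itself is a matter of assembling these correctly; the main thing to be careful about is the meaning of the superscript $W_\sigma$ on the right-hand summands. By construction the action of $W_\sigma$ on $\Ind_{QN}^G\H_\sigma$ occurring there is the one transported from $C^*_r(G/N)\otimes_{C^*_r(L)}\Ind_Q^L\H_\sigma$ via Corollary~\ref{cor-ind-stages}, and this is all that the statement requires; the fact that it agrees with the restriction to $W_\sigma$ of the Knapp--Stein action on $\Ind_{QN}^G\H_\sigma$ is the standard compatibility of normalized intertwiners with induction in stages, but that identification need not be invoked here.
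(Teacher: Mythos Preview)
Your proposal is correct and follows essentially the same route as the paper: the theorem is stated with a \qed because the paper's preceding paragraphs already assemble exactly the two inputs you name---the direct-sum formalism \textup{(\ref{eq-dir-sum-decomp-E2})} applied to $\E=C^*_r(G/N)$ via the structure theorem \textup{(\ref{eq-structure-of-L})} for $C^*_r(L)$, and the induction-in-stages identification of Corollary~\ref{cor-ind-stages}. Your more explicit verification of (a)--(c) and your careful remark on the provenance of the $W_\sigma$-action on $\Ind_{QN}^G\H_\sigma$ are welcome elaborations of what the paper leaves implicit.
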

  
\begin{example}\label{ex-decomp-Pmin}
In the case of a \emph{minimal} parabolic subgroup $P = LN= MAN$, one has 
\[
\Csr(L)\cong \bigoplus_{[\sigma]\in \widehat  M}   C_0\bigl (\widehat A, \Compact (H_\sigma)\bigr ) .
\] 
Note that there are no proper parabolic subgroups of $L$, and $M$ is compact, so that each $H_\sigma$ is in fact finite-dimensional.   The decomposition of  Theorem~\ref{thm-decomp-module} reduces to
\begin{equation*}
\Csr(G/N) \cong \bigoplus _{\sigma\in\widehat M }\Compact (\H_\sigma ,\Ind_P^G \H_\sigma) \cong 
 \bigoplus _{\sigma\in\widehat M }  C_0\bigl ( \widehat A, \Compact (H_\sigma ,\Ind_P^G H_\sigma) \bigr )
 .
\end{equation*}
Note that the groups $W_\sigma(L)$ are trivial in this case.
\end{example}

\begin{remark}
Results analogous to Theorem \ref{thm-decomp-module} at the Hilbert space level are presented in the final chapter of \cite{MR1170566} in connection with Whittaker functions. See in particular 15.9.3 for the Plancherel decomposition of the quasiregular representation of $G$ on $L^2(G/N)$.

\end{remark}

\section{The Adjoint Hilbert Module}
\label{sec-adjoint-module}

\begin{definition}
\label{dej-adjoint-mod}
Let $A$ and $B$ be $C^*$-algebras, and let $\E$ be a Hilbert correspondence from $A$ to $B$.  We shall denote by $\E^*$ the complex conjugate vector space to $\E$, equipped with the following (algebraic) $B$-$A$-bimodule structure:
\[
b\cdot \overline e \cdot a = \overline{a^*e b^*}.
\]
This is the \emph{adjoint bimodule} to $\E$.
\end{definition}

The adjoint bimodule $\E^*$  will not typically carry the structure of a correspondence from $B$ to $A$, but in this section we shall show that when $\E= C^*_r (G/N)$, the {adjoint bimodule} $C^*_r (G/N)^*$ can be equipped with a $C^*_r(G)$-valued inner product that makes it a correspondence from $C^*_r(L)$ to $C^*_r (G)$. In the next section  we shall characterize this secondary inner product using the Plancherel theorem.

The significance of this fact is that interior tensor product with $C^*_r (G/N)^*$, as in Definition~\ref{def-interior-tp}, gives a functor of \emph{parabolic restriction} from tempered representations of $G$ to tempered representations of $L$.  In this paper we shall merely introduce the parabolic restriction functor.  In a subsequent paper \cite{CCH_adjoint} we shall show that the new functor is simultaneously left and right adjoint to the functor of parabolic induction between categories of unitary tempered Hilbert space representations. 

 The construction of the $C^*_r (G)$-valued inner product on $C^*_r (G/N)^*$ is in fact very straightforward, given the structure theory developed in the  last several sections.  
 
 Recall that according to Theorem~\ref{thm-reductive-cstar-structure2} there is an isomorphism
\begin{equation}
\label{eq-cstar-recap}
\Csr(G)\stackrel \cong \longrightarrow \bigoplus _{[P,\sigma]} \Compact (\Ind _P ^G \H_{\sigma} )^{W_{\sigma}} ,
\end{equation}
while according to Theorem~\ref{thm-decomp-module} there is an isomorphism
\begin{equation}
\label{eq-module-recap}
C^*_r (G/N)  \stackrel \cong   \longrightarrow 
\bigoplus _{[ Q, \sigma]} 
 \Compact  \bigl ( \Ind_Q^L\H_\sigma, \Ind_{QN}^G \H_\sigma    \bigr )^{W_{\sigma}} .
\end{equation}
The first thing to say about the $C^*_r(G)$-valued inner product is that, by definition, the summands in (\ref{eq-module-recap}), or rather the adjoint modules associated to them, will be orthogonal to one another.

As for the individual summands in (\ref{eq-module-recap}), the inner product of a pair of elements from the $[Q,\sigma]$-summand will lie in the $[QN,\sigma]$-summand of (\ref{eq-cstar-recap}); see Lemma~\ref{lem-para-para}.  In order to define this inner product  it will be helpful to refine a little the notation for groups of intertwining operators $W_\sigma$ that we have used up to now.  Let $J$ be a standard  Levi subgroup of $L$, where $L$ is in turn a standard Levi subgroup of $G$, and let $\sigma$ be an irreducible, square-integrable representation of the compactly generated part of $J$.  We shall now use the notations
 \[
 W_\sigma (G) = \{ w \in N_K(J) : \operatorname{Ad}_w ^* \sigma \simeq \sigma\, \} /  K\cap J  
\]
and 
 \[
 W_\sigma (L) = \{ w \in N_{K\cap L}(J) : \operatorname{Ad}_w ^* \sigma \simeq \sigma\, \} /  K\cap J ,
\]
which take into account the fact that $J$ may regarded as a standard Levi subgroup of either $G$ or $L$.  Of course,
 \[
 W_\sigma (L) \subseteq W_\sigma (G).
 \]
Suppose then we are given two elements in one summand of $C^*_r (G/N)^*$, say 
\[
\overline S_1, \overline S_2 \in \overline { \Compact    ( \Ind_Q^L\H_\sigma, \Ind_{QN}^G \H_\sigma      )^{W_{\sigma}(L)}  }
\]
(the complex conjugates are present as a result  of our definition of the adjoint module). We define 
\begin{equation}
\label{eq-G-average-inner-product}
\bigl \langle \overline S_1 , \overline S_2  \bigr \rangle _{C^*_r (G)} = 
\operatorname{Av}_{W_\sigma(G)} (S_1S_2^*) 
\in \Compact ( \Ind_{QN}^G\H_\sigma)^{W_\sigma(G)},
\end{equation}
where on the right-hand side we have taken the average over the action of the finite group $W_\sigma(G)$ on the $C^*$-algebra $ \Compact (\Ind _{QN} ^G \H_{\sigma} )  $.

The formula (\ref{eq-G-average-inner-product})  satisfies the algebraic requirements for a Hilbert module inner product. In addition, the $[Q,\sigma]$-summand that we are studying is complete in the norm associated to the inner product.  This is because 
\[
\bigl \langle \overline S  , \overline S  \bigr \rangle _{C^*_r (G)} = \operatorname{Av}_{W_\sigma(G)} (S S^*) \ge \frac {|W_\sigma(L)|}{|W_\sigma(G)|} SS^*,
\]
and so, since $\|SS^*\| = \| S^*S\|$, 
\begin{equation}
\label{eq-equivalent-norms}
\| S \|_{C^*_r (G/N)^*}^2 \ge  \frac {|W_\sigma(L)|}{|W_\sigma(G)|} \|S\|_{C^*_r (G/N)}^2 .
\end{equation}
Therefore we obtain a Hilbert $C^*_r(G)$-module structure on the $[Q,\sigma]$-summand of $C^*_r (G/N)^*$, as required.

To complete the construction we need to show that the (complex conjugate of the) Hilbert module direct sum (\ref{eq-module-recap}), which is the completion of the algebraic direct sum in the $C^*_r (G/N)$-norm, is also the Hilbert module direct sum in the $C^*_r (G/N)^*$-norm.

Each summand of the Hilbert module is supported, as either a Hilbert $C^*_r(G)$-module or a Hilbert $C^*_r(L)$-module, in a single summand of the reduced group $C^*$-algebra (that is, all the inner products lie in a single summand in the direct sum decomposition of the $C^*$-algebra). Moreover there is a  uniform bound on the number of Hilbert module summands that are supported in any given $C^*$-algebra summand.  Finally, in addition to   the inequality (\ref{eq-equivalent-norms}) we also have the inequality
\[
 \|S\|_{C^*_r (G/N)}^2 \ge \| S \|_{C^*_r (G/N)^*}^2 
\]
in each Hilbert module summand.  So the  $C^*_r (G/N)$-norm in each summand  is bounded uniformly by a multiple of the $C^*_r (G/N)^*$-norm, and vice versa.   It follows that the $C^*_r (G/N)$-norm and the $C^*_r (G/N)^*$-norm are bounded by multiples of each other on the algebraic direct sum, so the completion in the two norms agree, as required.

\begin{definition}
Let $G$ be a real reductive group and let $P=LN$ be a parabolic subgroup of $G$. 
We shall  denote the correspondence from $C^*_r (L)$ to $C^*_r(G)$ just constructed by  
\[
C^*_r (N\backslash G) = C_r^* (G/N) ^*  .
\]
\end{definition}

\begin{remark}
The  notation reflects the fact that the $L$-$G$-bimodule $C_r^*(G/N)^*$ can be viewed as a completion of $C_c^\infty (N\backslash G)$, with the left $L$-action and right $G$-action defined analogously to our approach  in Section~\ref{sec-parabolic-induction} to the actions on $C_c^\infty (G/N)$. One  associates to a function $f\in C_c^\infty (N\backslash G)$ the function 
\[
f^*\colon gN\mapsto \overline{f(Ng^{-1} )}
\]
 on $G/N$.    
 \end{remark}
 
 \begin{definition}
 \label{def-parabolic-restriction}
If $H$ is a Hilbert space carrying a tempered unitary representation of $G$, or in other words a  nondegenerate representation of the $C^*$-algebra $C^*_r (G)$, then the \emph{parabolic restriction} of $H$ is the Hilbert space
\[
\Res^G _P H = C^*_r (N\backslash G) \otimes _{C^*_r (G)} H ,
\]
together with the   tempered unitary representation of $L$ that it carries.
\end{definition}
 
 Here are the results of some easy sample calculations, along with some remarks about parabolic restriction as it is understood from a more standard representation-theoretic point of view.  
 
 In each case the calculation is carried out using the explicit forms of $C^*_r(G)$ and $C^*_r (G/N)$ that we have determined in the paper.  In other words the calculations, though short, are far from basic, in that they presuppose a great deal of representation theory of the sort relied upon in this paper. 
 
 \begin{example}
 \label{ex-discr-series}
The first thing to be said  is that if an irreducible representation $\pi$ is square-integrable on the compactly generated part of $G$, then $\Res^G_P H_\pi =0$ for all proper parabolic subgroups $P\subseteq G$.  This is because $C^*_r (G)$ can be written as a sum of two complementary ideals, one acting trivially on $H_\pi$ and the other acting trivially on $C^*_r (N\backslash G)$.   

In contrast, the usual (left) adjoint to parabolic induction considered in representation theory (the space of $\mathfrak n$-coinvariants of a $(\mathfrak g, K)$-module) is nonzero in this and indeed any admissible case, by Casselman's famous subrepresentation theorem.
\end{example}

 \begin{example}Consider next a principal series representation $\Ind_P^G H_{\sigma\otimes \varphi}$.  If the isotropy group $W_{\sigma,\varphi}$ of Definition~\ref{def-small-isotropy}  is trivial (and so, for example, by Theorem~\ref{thm-hc-completeness}  the representation is irreducible), then 
\[
\Res^G_P \Ind ^G_P H_{\sigma\otimes \varphi} \cong \bigoplus _{w\in W} H_{w(\sigma\otimes \varphi)}
\]
This is consistent with the standard situation in representation theory.
\end{example}

\begin{example}
More interesting, perhaps is the case of a principal series representation for which the intertwining group $W_{\sigma,\varphi}$ is large.  Consider for example  the base of the spherical principal series.  Here one has 
\[
\Res^G_P \Ind ^G_P \C_0 \cong \C_0 .
\]
As in Example~\ref{ex-discr-series}, this  is smaller than the space   of $\mathfrak n$-coinvariants of the associated $(\mathfrak{g}, K)$-module.
 \end{example}

As we have already mentioned, the central fact about the functor of parabolic restriction, which we shall establish in \cite{CCH_adjoint}, is as follows. 

\begin{theorem}
\label{thm-double-adjoint}
 The functor $\Res_P^G$,  from tempered unitary representations of $G$ to tempered unitary representations of $L$,  is both  left  and right adjoint to the functor $\Ind^G_P$ of parabolic induction.
\end{theorem}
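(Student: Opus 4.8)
\emph{Strategy and reduction to a single summand.} The plan is to use the structure theory of Sections~\ref{sec-decomp}--\ref{sec-adjoint-module} to reduce the theorem to a statement about an inclusion of $C^*$-algebras of the form ``fixed points under a finite group sitting inside fixed points under a subgroup'', and then to appeal to the two-sided Frobenius reciprocity enjoyed by such finite-index inclusions. By Theorem~\ref{thm-reductive-cstar-structure2} and its counterpart~\eqref{eq-structure-of-L} for $L$, by Theorem~\ref{thm-decomp-module}, and by the construction of $\Csr(N\backslash G)$ in Section~\ref{sec-adjoint-module}, the four objects $\Csr(G)$, $\Csr(L)$, $\Csr(G/N)$ and $\Csr(N\backslash G)$ split compatibly into direct sums indexed by the associate classes $[Q,\sigma]$, with $Q\subseteq L$ a standard parabolic with Levi factor $J$ and $\sigma$ square-integrable. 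The functor $\Ind_P^G$ carries the $[Q,\sigma]$-summand of $\Csr(L)$-representations to the $[Q,\sigma]$-summand of $\Csr(G)$-representations by induction in stages (Corollary~\ref{cor-ind-stages}), and $\Res_P^G$ does the same, by taking adjoint modules; so it suffices to prove the two adjunctions one summand at a time. Fix $[Q,\sigma]$, abbreviate $\H_L=\Ind_Q^L\H_\sigma$ and $\H_G=\Ind_{QN}^G\H_\sigma$ --- both Hilbert modules over $C_0(\widehat{A_J})$ --- and recall from Sections~\ref{sec-group-alg-structure} and~\ref{sec-bimodule-structure} that on this summand $\Csr(G)$ is $\Compact(\H_G)^{W_\sigma(G)}$, the $\Csr(L)$-summand is Morita equivalent via $\H_L$ to $\Compact(\H_L)^{W_\sigma(L)}$, the $\Csr(G/N)$-summand is $\Compact(\H_L,\H_G)^{W_\sigma(L)}$, and $W_\sigma(L)\subseteq W_\sigma(G)$.

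\emph{Identifying the functors.} Every fibre $\H_{L,\varphi}=H_\sigma$ and $\H_{G,\varphi}=\Ind_{QN}^G H_{\sigma\otimes\varphi}$ is nonzero, so $\Compact(\H_L,\H_G)$ is an imprimitivity bimodule between $\Compact(\H_G)$ and $\Compact(\H_L)$; using the Knapp--Stein intertwiners of Theorem~\ref{thm-intertwiner-existence} to see that $W_\sigma(L)$ acts diagonally on it, and passing to fixed points, one obtains a Morita equivalence $\Compact(\H_G)^{W_\sigma(L)}\sim\Compact(\H_L)^{W_\sigma(L)}$. Chasing through Definition~\ref{def-interior-tp}, the functor $\Ind_P^G$ becomes, under this equivalence, restriction of representations along the finite-index inclusion
\[
\Compact(\H_G)^{W_\sigma(G)}\ \subseteq\ \Compact(\H_G)^{W_\sigma(L)}
\]
(the smaller group $W_\sigma(L)$ has the larger fixed-point algebra), with conditional expectation $E$ onto the smaller algebra given by averaging over $W_\sigma(G)$. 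A short computation identifies the $\Csr(G)$-valued inner product~\eqref{eq-G-average-inner-product} with the one determined by $E$ (both reduce to $(\overline{S_1},\overline{S_2})\mapsto\operatorname{Av}_{W_\sigma(G)}(S_1S_2^*)$); so under the same equivalence $\Res_P^G$ becomes the induction functor $\Compact(\H_G)^{W_\sigma(L)}\otimes_{\Compact(\H_G)^{W_\sigma(G)}}(-)$ attached to this inclusion, with $\Csr(G)$-valued inner product coming from $E$.

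\emph{Frobenius reciprocity.} For a $C^*$-inclusion $B\subseteq B'$ carrying a conditional expectation of finite index --- which a fixed-point inclusion for a finite group and a subgroup always does --- the algebra $B'$ is finitely generated and projective as a Hilbert $B$-module and carries a quasi-basis $\{m_i\}$ with $\sum_i m_i\,E(m_i^*\,\cdot\,)=\mathrm{id}$. One uses such a quasi-basis to write down explicit unit and counit natural transformations realizing $B'\otimes_B(-)$ as simultaneously a left and a right adjoint of restriction, the triangle identities being exactly the quasi-basis relation. Applied to $B=\Compact(\H_G)^{W_\sigma(G)}\subseteq B'=\Compact(\H_G)^{W_\sigma(L)}$ this yields both adjunctions between $\Ind_P^G$ and $\Res_P^G$ on the $[Q,\sigma]$-summand. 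Because of the inequality~\eqref{eq-equivalent-norms} and its converse, the $\Csr(N\backslash G)$-norm is equivalent to the $\Csr(G/N)$-norm on the summand, so the units and counits are implemented by bounded intertwiners of the underlying unitary Hilbert space representations, as the statement requires. Reassembling over all $[Q,\sigma]$ proves the theorem.

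\emph{The main obstacle.} The delicate point is the second step: one must check that $\Csr(G/N)$, summand by summand, genuinely is a $W_\sigma(L)$-equivariant imprimitivity bimodule, that passing to $W_\sigma(L)$-fixed points does not destroy fullness on either side, and that this identification carries the inner product constructed in Section~\ref{sec-adjoint-module} to the one built from the conditional expectation. All of this relies on Harish--Chandra's completeness theorem (Theorem~\ref{thm-hc-completeness}) and on the Knapp--Stein intertwiners, but it demands careful tracking of the central-unitary ambiguities in those intertwiners and of the interplay between the two finite groups $W_\sigma(L)\subseteq W_\sigma(G)$. It is precisely this bookkeeping that the authors of \cite{CCH_adjoint} avoid by recasting everything in terms of the operator-space structure of the bimodule $\Csr(G/N)$.
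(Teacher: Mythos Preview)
First, note that the present paper does not actually prove Theorem~\ref{thm-double-adjoint}: the statement is announced here, but the proof is explicitly deferred to the sequel~\cite{CCH_adjoint}, where (as the surrounding text and the introduction indicate) operator-space methods are used rather than a direct structure-theoretic reduction.  So there is no in-paper argument to compare against; one can only judge your sketch on its own merits and against what the authors hint about~\cite{CCH_adjoint}.

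Your reduction to a single summand via Theorems~\ref{thm-reductive-cstar-structure2} and~\ref{thm-decomp-module} and Corollary~\ref{cor-ind-stages} is correct, and the idea of interposing the intermediate algebra $\Compact(\H_G)^{W_\sigma(L)}$ is a good one.  The gap is in the ``Frobenius reciprocity'' paragraph.  Your claim that a fixed-point inclusion $A^{W}\subseteq A^{W'}$ for a finite group $W$ and subgroup $W'$ \emph{always} carries a finite-index conditional expectation (equivalently, that $A^{W'}$ is finitely generated and projective as a Hilbert $A^{W}$-module) is false in general.  Take $A=C_0(\R)$ with $W=\Z/2$ acting by $x\mapsto -x$ and $W'$ trivial: over $C_0(\R)^{\Z/2}\cong C_0([0,\infty))$ the module $C_0(\R)$ splits as even functions plus odd functions, and the odd part is the non-complemented ideal $C_0((0,\infty))\subseteq C_0([0,\infty))$, so no quasi-basis can exist (the two pointwise conditions a quasi-basis would impose are incompatible at the fixed point $x=0$).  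In your situation the Knapp--Stein intertwiners act nontrivially on the fibres $\Compact(\Ind_{QN}^G H_\sigma)$ as well as on the base $\widehat A$, and this extra twisting may well rescue finite projectivity---but that is exactly what must be \emph{proved}, and it is bound up with the $R$-group analysis of Remark~\ref{rem-wass} and with Theorem~\ref{thm-hc-completeness}.  Even granting a quasi-basis, the further passage from ``$B'$ is finitely generated projective over $B$'' to ``$B'\otimes_B(-)$ is a two-sided adjoint to restriction on nondegenerate Hilbert-space representations'' is not the triangle-identity tautology you suggest; one still has to produce bounded unit and counit maps in that category and verify both sets of triangle identities.  Your final paragraph rightly flags the Morita step as delicate, but the more serious missing ingredient is the finite-index property itself.
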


The proof is not   difficult, granted the structure theory for $C^*_r (G/N)$ that we have developed here, but it involves a separate set of operator-algebraic ideas, and it is for this reason that we have chosen to defer it.  
 
\begin{remark}
\label{rem-bernstein}
Let us close this section by   making a few informal comments for the benefit of those who are familiar with Bernstein's second adjoint theorem for smooth representations of reductive $p$-adic groups \cite{BernsteinPoly}.  Bernstein begins by observing that there is a natural candidate unit map for his adjunction.  It takes the form of a bimodule map
\[
C_c^\infty(L) \longrightarrow C_c(\overline N \setminus G ) \otimes _{C_c^\infty (G)} C_c^\infty (G/N) ,
\]
where $\overline N$ is the unipotent subgroup opposite to $N$, and it is associated to the inclusion of $\overline N \cdot L \cdot N$ as an open subset in $G$.  

Passing to   $C^*$-algebras and Hilbert modules (in either the real or the $p$-adic contexts) one can ask, does Bernstein's map extend to completions?  It does not.  However it is reasonably well-behaved as an unbounded operator (it is regular \cite[Chapter 9]{Lance}).  

Another manageable problem is the appearance of $\overline N$ in place of a second copy of $N$ (as we use in this paper and its sequel).   The $C^*$-correspondences associated to the two different unipotents are isomorphic. 

After switching $\overline N$ for $N$, the unit map that arises from Theorem~\ref{thm-double-adjoint} can be viewed as a bounded transform (in roughly the sense of \cite[Chapter 10]{Lance}). But we emphasize that it  exists only at the Hilbert space level, not the Hilbert module level. 

In conclusion, then, Bernstein's unit, and hence his adjunction,  is distinct  from that of Theorem~\ref{thm-double-adjoint}, but related to it. 
 
 A perhaps more interesting and more fundamental observation is that at the level of Harish-Chandra's Schwartz space (see the next section)  and its associated bimodules,   Bernstein's unit is indeed well-defined.   Moreover we have checked for $G=\mathrm{SL}(2,\R)$  that the counterpart of Bernstein's theorem is \emph{true} (and we believe it is true generally). See \cite{CH_cb} for some preliminary computations in this direction; a fuller treatment of these matters is in preparation.  
\end{remark}

\section{Relation to the Plancherel Formula}

We shall close this paper with a calculation that relates the inner product on the adjoint correspondence $C^*_r (N\backslash G)$ to the Plancherel formula.

It will be convenient to work with Harish-Chandra's Schwartz space $\mathcal C (G)$. This is a Fr\'echet space of smooth functions on $G$ that contains as a dense subspace the smooth and compactly supported functions. The Schwartz space is included continuously as a dense subspace in  both $L^2 (G)$ and $C^*_r (G)$.   See \cite[Part I]{MR0219666} and, for an exposition,  \cite[Chapter 7]{MR929683}.  In every irreducible representation of $C^*_r (G)$ the functions in $\mathcal C (G)$ act as Hilbert-Schmidt operators.

We begin by reviewing the Plancherel formula for $G$ (and for this purpose we shall not need to fix yet a parabolic subgroup $P\subseteq G$).  

Choose a representative  $(P,\sigma)$ for each of the associate classes $[P,\sigma]$ for $G$, as in Definition~\ref{def-associate-class}.   There is a Langlands decomposition $P = M_PA_PN_P$, and, as we noted earlier the group  $A_P$, which consists entirely of positive-definite matrices,  is isomorphic to its Lie algebra  via the exponential map. So $A_P$ carries   the structure of a vector space, and  we can speak its   space of Schwartz functions in the ordinary sense of harmonic analysis.  The same goes for the unitary (Pontrjagin) dual $\widehat A_P$.  By a  \emph{tempered} measure on $\widehat A _P$ we   mean a smooth measure  for which integration extends to a continuous linear functional on the Schwartz space.

Finally, let  us write 
\[
\pi_{\sigma,\varphi} = \Ind ^G_P (\sigma \otimes \varphi)
\]
for the representation of $G$ parabolically induced from the representation $\sigma\otimes \varphi $ of $L=M_PA_P$.   Using this terminology and notation, the general structure of Harish-Chandra's Plancherel formula for the group $G$ \cite{HC3} is as follows (see \cite[Chapter 13]{MR1170566} for an exposition). 

\begin{theorem}
\label{thm-plancherel1}
There are unique smooth, tempered, $W_\sigma$-invariant measures $m_{P,\sigma}$ on the spaces $ \widehat A_P$ such that 
\begin{equation*}
\|f\|^2 _{L^2 (G)}  = \sum _{[P,\sigma]}   
	\int _{\widehat A _P} \|\pi_{\sigma,\varphi} (f) \|^2 _{\text{H-S}}\, dm_{P,\sigma} (\varphi),
\end{equation*}
for every $f\in \mathcal C (G)$.  
\end{theorem}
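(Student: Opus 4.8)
The plan is to obtain the Plancherel identity as a structural consequence of the decomposition of $C^*_r(G)$ established in Theorem~\ref{thm-reductive-cstar-structure2}, combined with the general Hilbert-space theory of postliminal $C^*$-algebras and with the analytic input of Harish-Chandra, which cannot be avoided. The starting point is that $L^2(G)$, with the left regular representation of $C^*_r(G)$, is a nondegenerate module, and that a nondegenerate representation of a $C^*$-algebraic direct sum splits as the Hilbert-space direct sum of the essential subspaces of the summands. Using the isomorphism $C^*_r(G)\cong\bigoplus_{[P,\sigma]}\Compact(\Ind_P^G\H_\sigma)^{W_\sigma}$, I would therefore write $L^2(G)=\bigoplus_{[P,\sigma]}H_{P,\sigma}$, with $H_{P,\sigma}$ the closure of $C^*_r(G)_{P,\sigma}\cdot L^2(G)$; since both sides of the asserted identity are additive over this decomposition, it is enough to treat one summand at a time.

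Fix $[P,\sigma]$ and put $B=\Compact(\Ind_P^G\H_\sigma)^{W_\sigma}$. By Lemma~\ref{lem-reps-of-kw} this algebra is postliminal with spectrum $\widehat A_P/W_\sigma$ (and away from the measure-zero locus where the isotropy $W_{\sigma,\varphi}$ is nontrivial there is no ambiguity of multiplicity). Disintegration theory then presents $H_{P,\sigma}$ as a direct integral over $\widehat A_P/W_\sigma$, and, after pulling back along $\widehat A_P\to\widehat A_P/W_\sigma$, as a direct integral over $\widehat A_P$ of multiplicity Hilbert spaces against some measure; what must be pinned down is the measure class and the multiplicity. For this I would invoke the fact, from the general Plancherel theory of type~I unimodular groups \cite{DixmierEnglish}, that the two-sided regular representation identifies $L^2(G)$ with a direct integral $\int^\oplus(H_\pi\otimes\overline{H_\pi})$ over the reduced dual; transported to the present parametrisation this says that the multiplicity space at a generic $\varphi$ is $\overline{H_{\pi_{\sigma,\varphi}}}$ and the measure is a measure $m_{P,\sigma}$ on $\widehat A_P$, necessarily $W_\sigma$-invariant because the normalised intertwiners of Theorem~\ref{thm-intertwiner-existence} realise $\pi_{\sigma,\varphi}\cong\pi_{\sigma,w\varphi}$. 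Since the Hilbert--Schmidt norm of an operator $T$ on $H$ equals the norm of the corresponding vector of $H\otimes\overline H$, and since $\pi_{\sigma,\varphi}(f)$ is Hilbert--Schmidt for $f\in\mathcal C(G)$ (noted in the paragraph preceding the theorem), this yields exactly the stated formula with these $m_{P,\sigma}$.

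The two genuinely hard inputs here are not formal: (i) that the summand $H_{P,\sigma}$ is supported precisely on the continuous family $\{\pi_{\sigma,\varphi}:\varphi\in\widehat A_P\}$ with the expected multiplicity, with no extra discrete part; and (ii) that the resulting measure $m_{P,\sigma}$ is smooth and tempered. For (i) and (ii) I would quote Harish-Chandra's Plancherel theorem \cite{HC3}: (i) rests on the theory of Eisenstein integrals, wave packets and the Maass--Selberg relations, while (ii) follows from Harish-Chandra's explicit identification of $dm_{P,\sigma}$ with a constant multiple of the Plancherel density (the $\mu$-function) times Lebesgue measure on $\widehat A_P$, the $\mu$-function being real-analytic and of at most polynomial growth, built from the Knapp--Stein $c$-functions. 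I expect this to be the main obstacle; the preceding sections organise the statement but provide no genuine shortcut around the Harish-Chandra theory.

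Uniqueness, by contrast, is the easy part, and is where smoothness pays off. If $m$ and $m'$ are smooth tempered measures on $\widehat A_P$ both satisfying the identity, then $\int_{\widehat A_P}\|\pi_{\sigma,\varphi}(f)\|^2_{\text{H-S}}\,(dm-dm')(\varphi)=0$ for every $f\in\mathcal C(G)$; using Harish-Chandra wave packets --- equivalently, the density of the image of $C_c^\infty(G)$ among Schwartz sections of the field of Hilbert--Schmidt operators over the tempered dual --- one produces enough functions $\varphi\mapsto\|\pi_{\sigma,\varphi}(f)\|^2_{\text{H-S}}$ to separate smooth measures on $\widehat A_P$, so $m=m'$. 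The $W_\sigma$-invariance demanded in the statement is then automatic from the $W_\sigma$-invariance of these integrands.
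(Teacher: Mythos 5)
The paper does not prove this statement at all: it is announced explicitly as a \emph{review} of Harish-Chandra's Plancherel theorem, quoted from \cite{HC3} with \cite[Chapter 13]{MR1170566} given as an exposition, and no proof environment follows. So there is strictly nothing in the paper to compare your argument against; you have attempted to supply a derivation where the paper offers only a citation.

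As a derivation your sketch is reasonable and correctly identifies the genuinely analytic inputs --- the absence of any discrete part outside the enumerated principal series, the smoothness and temperedness of the density, and the wave-packet theory used for uniqueness --- as coming from Harish-Chandra. But the reduction via Theorem~\ref{thm-reductive-cstar-structure2} should be understood as a reorganization, not a shortcut: that structure theorem itself leans on Harish-Chandra's completeness theorem and the Langlands disjointness theorem, and your appeal to the abstract type~I direct-integral decomposition $L^2(G)\cong\int^\oplus H_\pi\otimes\overline{H_\pi}\,d\mu$ already carries the full weight of the abstract Plancherel theorem; what Harish-Chandra supplies, and what none of the paper's earlier sections touch, is the explicit identification of the measure on each $\widehat A_P$. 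You acknowledge this in your penultimate paragraph, which is the right conclusion. One ordering caveat: when you pass from a direct integral over $\widehat A_P/W_\sigma$ to one over $\widehat A_P$ you tacitly discard the branch locus where $W_{\sigma,\varphi}\neq 1$ as negligible; for a general measure in the relevant class this is not automatic, and it is exactly the smoothness of $m_{P,\sigma}$ (which you are trying to establish) that justifies it. So that step should be performed \emph{after}, not before, invoking Harish-Chandra's explicit formula for the Plancherel density.
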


As $\varphi\in \widehat A_P$ varies, the Hilbert spaces $\Ind_P^G H_{\sigma\otimes \varphi}$ can be identified with one another as representations of $K$.  Denote by $\Ind _P^G H_\sigma$  this common Hilbert space (as we have done earlier)  and form the Hilbert space tensor product
\begin{equation}
\label{eq-hilb-tensor-prod}
 L^2 \bigl (\widehat A_P , m_{P,\sigma}\bigr ) \otimes \mathcal L^2 \bigl (\Ind _P^G H_\sigma \bigr ) ,
\end{equation}
where $ \mathcal L^2   (\Ind _P^G H_\sigma   ) $  denotes the Hilbert space of Hilbert-Schmidt operators on    $\Ind_P^G H_\sigma$.

If $f\in \mathcal C (G)$ is \emph{$K$-finite}, meaning that its left and right $K$-translates span a finite-dimensional subspace of $\mathcal C (G)$, then   for every pair $(P,\sigma)$ the function 
\[
\widehat A_P \ni \varphi \mapsto \pi_{\sigma, \varphi}(f) \in \mathcal L^2 (\Ind _P^G H_\sigma)
\]
 is a Schwartz function from $\widehat A_P$ into a \emph{$K$-finite part} of $\mathcal L^2 (\Ind _P^G H_\sigma)$, meaning a finite-dimensional subspace that is invariant under the left and right actions of $K$.  We can regard the function as an element of the Hilbert space tensor product (\ref{eq-hilb-tensor-prod}). 

\begin{definition}
\label{def-fourier}
Let $f\in \mathcal C(G)$ be a $K$-finite function.  Its  \emph{Fourier transform} is the element 
of the direct sum Hilbert space 
\[
\bigoplus _{[P,\sigma]} L^2 \bigl (\widehat A_P , m_{P,\sigma}\bigr ) \otimes \mathcal L^2 \bigl (\Ind _P^G H_\sigma \bigr ) 
\]
determined by the operator-valued functions $\varphi \mapsto \pi_{\sigma,\varphi}(f)$.
\end{definition}

The  Plancherel formula can be reformulated in these terms, as follows. 

\begin{theorem}
\label{thm-plancherel3}
The Fourier transform, defined initially on $K$-bi-finite functions in $\mathcal C(G)$, extends to an isometric linear map
\begin{equation*}
L^2 ( G) \longrightarrow \bigoplus _{[P,\sigma]} L^2 \bigl (\widehat A_P , m_{P,\sigma}\bigr ) \otimes \mathcal L^2 \bigl (\Ind _P^G H_\sigma \bigr ) .
\end{equation*}
\end{theorem}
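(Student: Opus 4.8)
The plan is to deduce Theorem~\ref{thm-plancherel3} from Theorem~\ref{thm-plancherel1} by the standard density argument, but with attention to the two subtleties hiding in the statement: first, that the Fourier transform, \emph{a priori} defined only on $K$-bi-finite elements of $\mathcal C(G)$, is well-defined and isometric there; and second, that $K$-bi-finite functions in $\mathcal C(G)$ are dense in $L^2(G)$, so that the isometry extends uniquely to all of $L^2(G)$. Let me denote the target Hilbert space of Definition~\ref{def-fourier} by $\mathcal{F}$, so $\mathcal{F} = \bigoplus_{[P,\sigma]} L^2(\widehat A_P, m_{P,\sigma}) \otimes \mathcal L^2(\Ind_P^G H_\sigma)$, and the Fourier transform by $f \mapsto \widehat f$.

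First I would record that for $K$-bi-finite $f \in \mathcal C(G)$ the element $\widehat f \in \mathcal{F}$ of Definition~\ref{def-fourier} really does lie in $\mathcal{F}$: this is exactly the content of the paragraph preceding Definition~\ref{def-fourier}, namely that $\varphi \mapsto \pi_{\sigma,\varphi}(f)$ is a Schwartz function valued in a finite-dimensional $K$-bi-invariant subspace of $\mathcal L^2(\Ind_P^G H_\sigma)$, hence square-integrable against the tempered measure $m_{P,\sigma}$, and that only finitely many isotypic components are hit, so the sum over $[P,\sigma]$ has finitely many nonzero terms. Then the identity $\|\widehat f\|^2_{\mathcal{F}} = \sum_{[P,\sigma]} \int_{\widehat A_P} \|\pi_{\sigma,\varphi}(f)\|^2_{\text{H-S}} \, dm_{P,\sigma}(\varphi)$ is literally the right-hand side of Theorem~\ref{thm-plancherel1}, so $\|\widehat f\|_{\mathcal{F}} = \|f\|_{L^2(G)}$ for all such $f$. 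In particular $f \mapsto \widehat f$ is isometric, hence bounded, on the subspace of $K$-bi-finite Schwartz functions equipped with the $L^2$-norm.

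Next I would invoke density. The $K$-bi-finite functions in $\mathcal C(G)$ are dense in $\mathcal C(G)$ in its Fr\'echet topology — this is standard: average against matrix coefficients of irreducible $K$-representations on the left and right, using that $K$ is compact and acts continuously — and $\mathcal C(G)$ is dense in $L^2(G)$ by the properties of the Schwartz space recalled at the start of the section. Composing, $K$-bi-finite Schwartz functions are dense in $L^2(G)$. An isometric linear map defined on a dense subspace of a Hilbert space and taking values in a complete space extends uniquely to an isometry on the whole space; applying this gives the desired isometric extension $L^2(G) \to \mathcal{F}$, and uniqueness of the extension makes the resulting map canonical.

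The main obstacle, to the extent there is one, is not analytic but bookkeeping: one must be careful that the ``Fourier transform'' as reformulated really matches the left-hand side of the Plancherel identity summand by summand, i.e.\ that $\|\pi_{\sigma,\varphi}(f)\|_{\text{H-S}}$ is computed with respect to the \emph{same} normalization of $\Ind_P^G H_\sigma$ (as a $K$-representation, independent of $\varphi$) that is used to form the Hilbert--Schmidt space $\mathcal L^2(\Ind_P^G H_\sigma)$ in (\ref{eq-hilb-tensor-prod}); this is exactly the identification of the Hilbert spaces $\Ind_P^G H_{\sigma\otimes\varphi}$ as $K$-representations mentioned just before (\ref{eq-hilb-tensor-prod}), and once that identification is fixed there is nothing further to check. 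One should also note in passing that the extension does not depend on the choice of representatives $(P,\sigma)$ for the associate classes, since different choices produce unitarily identified summands; but since the statement already fixes such a choice implicitly, this can be left as a remark.
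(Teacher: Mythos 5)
Your proposal is correct and matches what the paper does: the paper presents Theorem~\ref{thm-plancherel3} as an immediate reformulation of Theorem~\ref{thm-plancherel1} (``The Plancherel formula can be reformulated in these terms''), and your argument — isometry on $K$-bi-finite Schwartz functions directly from Theorem~\ref{thm-plancherel1}, then extension by density — is exactly the content being asserted. The only point you cite without spelling out is that for a $K$-bi-finite $f$ the Fourier transform has finitely many nonzero summands; this follows from Theorem~\ref{thm-unif-adm} via Frobenius reciprocity, which is consistent with the way the paper uses uniform admissibility elsewhere (cf.\ the proof of Corollary~\ref{cor-unif-adm}).
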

 
 We shall also need the following determination of the range of the Fourier transform.  See \cite[Chap.~3, Section 1]{MR697608}. 
 
\begin{theorem}
\label{thm-plancherel-range}
The  Fourier transform is a Hilbert space isometry from $L^2 (G)$ onto the   Hilbert subspace 
\[
\bigoplus _{[P,\sigma]}  \left [ L^2 \bigl (\widehat A_P , m_{P,\sigma}\bigr ) \otimes \mathcal L^2 \bigl (\Ind _P^G H_\sigma \bigr )\right ] ^{W_\sigma (G)}
\subseteq 
\bigoplus _{[P,\sigma]} L^2 \bigl (\widehat A_P , m_{P,\sigma}\bigr ) \otimes \mathcal L^2 \bigl (\Ind _P^G H_\sigma \bigr )  .
\]
\end{theorem}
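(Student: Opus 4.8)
Since the isometry statement is already Theorem~\ref{thm-plancherel3}, what remains is to identify the range of the Fourier transform, and this is the substantial fact quoted from \cite[Chap.~3, Section~1]{MR697608}. The plan is to establish two inclusions: that every Fourier transform is $W_\sigma(G)$-invariant, and, conversely, that every $W_\sigma(G)$-invariant vector of the target is a Fourier transform. The first inclusion is elementary given the structure theory already in place; the second is where the real content lies.

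For the first inclusion I would make the $W_\sigma(G)$-action on each summand $L^2(\widehat A_P, m_{P,\sigma})\otimes\mathcal L^2\bigl(\Ind_P^G H_\sigma\bigr)$ explicit: an element $w\in W_\sigma(G)$ acts on the first tensor factor by the translation $\varphi\mapsto\operatorname{Ad}^*_w\varphi$, which is unitary precisely because the measure $m_{P,\sigma}$ is $W_\sigma$-invariant (Theorem~\ref{thm-plancherel1}), and on the second factor by conjugation by the Knapp--Stein intertwiner $U_w$ of Theorem~\ref{thm-intertwiner-existence}. The intertwiners furnish only a projective action, as discussed in Section~\ref{sec-hilb-mod}, but the central scalars cancel under conjugation, so the induced action on Hilbert--Schmidt operators is honest. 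Since $U_{w,\varphi}$ intertwines $\pi_{\sigma,\varphi}$ with $\pi_{\sigma,\operatorname{Ad}^*_w\varphi}$, the identity $U_{w,\varphi}\,\pi_{\sigma,\varphi}(f)\,U_{w,\varphi}^{-1} = \pi_{\sigma,\operatorname{Ad}^*_w\varphi}(f)$ holds for every $f\in\mathcal C(G)$, and this says exactly that the operator-valued function $\varphi\mapsto\pi_{\sigma,\varphi}(f)$ is a fixed vector. By density of $\mathcal C(G)$ in $L^2(G)$ and continuity of the Fourier transform, the whole image lies in the closed invariant subspace.

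For the second inclusion I would use that the Fourier transform carries the left and right regular representations of $G$ on $L^2(G)$ to the actions of $C^*_r(G)$ on the target by fiberwise left and right multiplication of Hilbert--Schmidt operators; consequently its image is a closed $C^*_r(G)$-sub-bimodule, and it is nonzero by injectivity. Cutting by the $C^*$-algebra decomposition of Theorem~\ref{thm-reductive-cstar-structure2}, and by the central multipliers $C_0(\widehat A_P/W_\sigma(G))$, the image takes the form $\bigoplus_{[P,\sigma]}\bigl[L^2(E_{P,\sigma}, m_{P,\sigma})\otimes\mathcal L^2\bigl(\Ind_P^G H_\sigma\bigr)\bigr]^{W_\sigma(G)}$ for $W_\sigma$-invariant measurable sets $E_{P,\sigma}\subseteq\widehat A_P$, whose fiber at each $\varphi$ is a closed sub-bimodule of the Hilbert--Schmidt operators commuting with the intertwiner algebra $I(\sigma,\varphi)$. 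It then remains to check that $E_{P,\sigma}$ has full measure and that each fiber is everything. This comes down to the density of $\{\pi_{\sigma,\varphi}(f):f\in\mathcal C(G)\}$ in that fiber, which rests on Harish-Chandra's theorem that $C^*_r(G)$ acts by compact operators in every irreducible representation (Theorem~\ref{thm-cpt-action1}) together with the irreducibility of $\pi_{\sigma,\varphi}$ for generic $\varphi$. I expect this last verification --- propagating fullness of the image across the nongeneric characters and matching the twisting by $W_\sigma(G)$ with the algebras $I(\sigma,\varphi)$ --- to be the main obstacle, and it is precisely here that the delicate harmonic analysis behind \cite{MR697608} is required; absent a wish to reproduce it, one simply cites that reference.
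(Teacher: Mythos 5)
The paper states this result without proof, simply citing \cite{MR697608}; your proposal, which sketches the elementary inclusion and then defers surjectivity to the same reference, is consistent with that. Your account of the easy direction---that the intertwining relation $U_{w,\varphi}\,\pi_{\sigma,\varphi}(f)\,U_{w,\varphi}^{-1}=\pi_{\sigma,\operatorname{Ad}^*_w\varphi}(f)$, together with the $W_\sigma$-invariance of $m_{P,\sigma}$ from Theorem~\ref{thm-plancherel1}, places the image inside the invariant subspace---is correct and usefully records a reason the paper leaves implicit.
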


We shall use this fact to  calculate the adjoint to the Fourier transform in Definition~\ref{def-fourier}.
 
\begin{definition}
\label{def-wave-packet}
Let  $h$ be a Schwartz-class function from $\widehat A_P$ into a $K$-finite part of $\mathcal L^2 (\Ind_P^G H_\sigma)$.  The \emph{wave packet} associated to $h$ is the scalar function    
\begin{equation}
\label{eq-wave-packet}
\check h (g) =  \int _{\widehat A_P} \Trace \bigl (\pi_{\sigma,\varphi} (g^{-1}) h(\varphi)\bigr )  \, dm_{P,\sigma} (\varphi).
\end{equation}
on the group $G$.
\end{definition}

A fundamental theorem of Harish-Chandra asserts that wave packets are Schwartz functions on $G$.     See for example \cite[Theorems~12.7.1 and 13.4.1]{MR1170566}.

\begin{theorem}
\label{thm-wave-packet}
The wave packets \textup{(\ref{eq-wave-packet})} associated to the Schwartz-class functions from $\widehat A_P$ into the $K$-finite parts of $\mathcal L^2 (\Ind_P^G H_\sigma)$ all belong  to  the Harish-Chandra Schwartz space $\mathcal C(G)$.
\end{theorem}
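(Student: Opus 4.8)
The plan is to verify directly that $\check h$ satisfies every defining seminorm estimate of the Harish-Chandra Schwartz space: for all $u,v\in U(\mathfrak{g}_\C)$ and all integers $N\ge 0$,
\[
\sup_{g\in G}\ \bigl|(L_u R_v\check h)(g)\bigr|\ \Xi(g)^{-1}\,\bigl(1+\sigma(g)\bigr)^{N}\ <\ \infty ,
\]
where $\Xi$ is Harish-Chandra's spherical function, $\sigma$ the usual norm function on $G$, and $L_u,R_v$ denote left and right differentiation by $u,v$; see \cite{MR0219666, MR929683}. Membership of $\check h$ in $L^2(G)$ comes for free: by Theorem~\ref{thm-plancherel-range} the Fourier transform of Definition~\ref{def-fourier} is an $L^2$-isometry, and $\check h$ is, up to the evident pairing, its adjoint applied to $h$. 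The substance is the passage from $L^2$ to the Schwartz estimates, which is precisely Harish-Chandra's wave packet theorem.

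First I would reduce to an estimate for matrix coefficients of the principal series. Expanding the trace in \eqref{eq-wave-packet} in an orthonormal basis of $\Ind_P^G H_\sigma$ adapted to the $K$-isotypic decomposition, and using that $h(\varphi)$ takes values in a \emph{fixed} finite-dimensional, $K$-bi-invariant subspace of $\mathcal L^2(\Ind_P^G H_\sigma)$ independent of $\varphi$, one writes $\check h$ as a finite sum of integrals
\[
g\ \longmapsto\ \int_{\widehat A_P} h_{ij}(\varphi)\ \bigl\langle\pi_{\sigma,\varphi}(g^{-1})\,e_i,\ e_j\bigr\rangle\ dm_{P,\sigma}(\varphi),
\]
with $e_i,e_j$ ranging over $K$-finite vectors lying in boundedly many $K$-types and each $h_{ij}$ a scalar Schwartz function on the vector space $\widehat A_P$; the matrix coefficient $\varphi\mapsto\langle\pi_{\sigma,\varphi}(g^{-1})e_i,e_j\rangle$ is a matrix entry of a normalized Eisenstein integral. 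Since each $\pi_{\sigma,\varphi}$ is a smooth representation, and the action of $X\in\mathfrak g$ moves a given $K$-type into $K$-types at bounded distance with its $\mathfrak a_P$-component depending polynomially on $\varphi$, differentiating under the integral sign turns $L_u R_v\check h$ into a finite sum of integrals of the same shape, with $e_i,e_j$ still in boundedly many $K$-types and with an extra polynomial factor $p(\varphi)$ in the integrand. As $p(\varphi)h_{ij}(\varphi)$ is again Schwartz and the Plancherel density $m_{P,\sigma}$ is tempered, it suffices to prove the $N$-th estimate in the case $u=v=1$, for every $N$.

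It then remains to bound $\int_{\widehat A_P} h_{ij}(\varphi)\,\langle\pi_{\sigma,\varphi}(g^{-1})e_i,e_j\rangle\,dm_{P,\sigma}(\varphi)$. Write $g=k_1(\exp H)k_2$ via the Cartan decomposition with $H$ in the closed chamber $\overline{\mathfrak{a}_{+}}$, so that $\sigma(g)\asymp 1+\|H\|$. The case $N=0$ is immediate from Harish-Chandra's basic estimate $|\langle\pi_{\sigma,\varphi}(g^{-1})e_i,e_j\rangle|\le C\,(1+\|\varphi\|)^{d}\,\Xi(g)$ for unitary $\varphi$, together with the temperedness of $m_{P,\sigma}$. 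For $N\ge 1$ one combines two sources of decay, using the orthogonal splitting $\mathfrak a=\mathfrak a_P\oplus\mathfrak a^P$ with $\mathfrak a^P$ the split part of $\mathfrak m_P$, and correspondingly $H=H_P+H^P$. In the $\mathfrak a^P$-directions the matrix coefficient is governed by matrix coefficients of $\sigma$ on $M_P$; because $\sigma$ is \emph{square-integrable} these belong to $\mathcal C(M_P)$, hence decay faster than $\Xi_{M_P}$ by any power of the norm function of $M_P$, which — since $\Xi_G$ and $\Xi_{M_P}$ agree along $\mathfrak a^P$ up to polynomial factors — gives the needed decay in $\|H^P\|$. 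In the $\mathfrak a_P$-directions one uses the asymptotic expansion of the Eisenstein integral along $\exp\mathfrak a_P$: by the theory of the constant term it is a convergent superposition, over a finite set of Weyl group elements $s$, of terms $c_s(\varphi)\,e^{(s\varphi-\rho)(H_P)}$ times power series in the $e^{-\alpha(H_P)}$, where $\rho$ is the half-sum of the positive restricted roots, the $c$-functions are smooth of at most polynomial growth on the unitary parameter space $\widehat A_P$, and the exponentials are oscillatory in $H_P$ for unitary $\varphi$; integrating by parts $N$ times in $\varphi$ then transfers the derivatives onto $c_s(\varphi)\,p(\varphi)\,h_{ij}(\varphi)$, which remains Schwartz, and produces a factor bounded by $C_N(1+\|H_P\|)^{-N}$. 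Estimating the tails of the expansion by the $N=0$ bound and assembling the finitely many terms yields $|\check h(g)|\le C_N\,\Xi(g)\,(1+\sigma(g))^{-N}$, as required. The technical heart, and the main obstacle, is to make this integration by parts uniform in $H_P$ over the entire chamber — in particular near its walls, and throughout the intermediate range of $\sigma(g)$ before the asymptotic expansion becomes effective — while keeping uniform control of the Eisenstein integral and its $\varphi$-derivatives; this is exactly the content of \cite[Thms.~12.7.1 and 13.4.1]{MR1170566}, following Harish-Chandra's Plancherel papers \cite{HC3}.
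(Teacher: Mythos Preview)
The paper does not prove this theorem at all: it is stated as a fundamental result of Harish-Chandra and simply cited, with a pointer to the exposition in \cite[Theorems~12.7.1 and 13.4.1]{MR1170566}. So there is no ``paper's own proof'' to compare against; your proposal supplies strictly more than the paper does.

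As a sketch of the classical argument your outline is sound: the reduction to finitely many integrals of $K$-finite matrix coefficients of the principal series (Eisenstein integrals), the stability of this shape under $U(\mathfrak g)$-differentiation at the cost of polynomial factors in $\varphi$, the basic tempered estimate for $N=0$, and then the two-pronged strategy for $N\ge 1$ --- decay from square-integrability of $\sigma$ in the $\mathfrak a^P$-directions, and integration by parts in $\varphi$ against the oscillatory leading exponentials of the constant-term expansion in the $\mathfrak a_P$-directions --- is exactly the structure of Harish-Chandra's proof as presented in Wallach. You are also right that the genuine difficulty is uniformity of these estimates across the whole chamber, including near walls and in the pre-asymptotic range, and that this is precisely what the cited theorems establish. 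In short, your write-up is an honest outline that correctly defers the hard analytic core to the same references the paper invokes; there is no gap beyond the one you explicitly flag.
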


We can now carry out  the following   crucial computation involving wave packets, which is a simple consequence of Theorem~\ref{thm-plancherel-range}. 
\begin{proposition}
If $f$ is a Harish-Chandra Schwartz function on $G$, and if $h$ is a Schwartz function from $A_P$ to the $K$-finite part of $\mathcal L^2 ( \Ind_P^G H_{\sigma})$, then 
\[
\langle  f, \check h\rangle _{L^2 (G)} =  \langle \hat f , h \rangle _{ L^2  (\widehat A  , m_{P,\sigma}  ) \otimes \mathcal L^2  (\Ind_P^G H_{\sigma} )} .
\]
\end{proposition}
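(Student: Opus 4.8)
The plan is to recognize the asserted identity as the statement that the wave-packet map $h\mapsto\check h$ is the Hilbert-space adjoint of the Fourier transform $\mathcal F\colon L^2(G)\to\bigoplus_{[P,\sigma]}L^2(\widehat A_P,m_{P,\sigma})\otimes\mathcal L^2(\Ind_P^G H_\sigma)$ of Definition~\ref{def-fourier}, restricted to data supported in a single summand, and to verify this by a direct interchange-of-integration computation. By Theorem~\ref{thm-plancherel3} the map $\mathcal F$ is a bounded (indeed isometric) operator on $L^2(G)$, and for $f\in\mathcal C(G)$ its $[P,\sigma]$-component is the function $\varphi\mapsto\pi_{\sigma,\varphi}(f)$ (this follows from Definition~\ref{def-fourier}, the density of $K$-finite functions in $\mathcal C(G)$, and the continuity of $\mathcal F$). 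Both sides of the proposition are therefore well defined for every $f\in\mathcal C(G)$, and it suffices to compute the left-hand side.

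Fix $f\in\mathcal C(G)$. Inserting the definition \eqref{eq-wave-packet} of $\check h$ into $\langle f,\check h\rangle_{L^2(G)}=\int_G\overline{f(g)}\,\check h(g)\,dg$, interchanging the integrations over $G$ and $\widehat A_P$ (justified in the next paragraph), and drawing the $g$-integral inside the trace, one obtains
\[
\langle f,\check h\rangle_{L^2(G)}
=\int_{\widehat A_P}\Trace\!\left(\left[\int_G\overline{f(g)}\,\pi_{\sigma,\varphi}(g^{-1})\,dg\right]h(\varphi)\right)dm_{P,\sigma}(\varphi).
\]
Since $\pi_{\sigma,\varphi}$ is unitary, $\pi_{\sigma,\varphi}(g^{-1})=\pi_{\sigma,\varphi}(g)^*$, so the bracketed operator is exactly $\pi_{\sigma,\varphi}(f)^*$; hence, with the Hilbert--Schmidt convention $\langle S,T\rangle_{\text{H-S}}=\Trace(S^*T)$,
\[
\langle f,\check h\rangle_{L^2(G)}
=\int_{\widehat A_P}\Trace\bigl(\pi_{\sigma,\varphi}(f)^*h(\varphi)\bigr)\,dm_{P,\sigma}(\varphi)
=\int_{\widehat A_P}\bigl\langle\pi_{\sigma,\varphi}(f),\,h(\varphi)\bigr\rangle_{\text{H-S}}\,dm_{P,\sigma}(\varphi).
\]
Because $h$ is supported in the single $[P,\sigma]$-summand and the $[P,\sigma]$-component of $\hat f$ is $\varphi\mapsto\pi_{\sigma,\varphi}(f)$, the right-hand side is precisely $\langle\hat f,h\rangle$ in $\bigoplus_{[P,\sigma]}L^2(\widehat A_P,m_{P,\sigma})\otimes\mathcal L^2(\Ind_P^G H_\sigma)$, which is the claim.

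The one genuine point is the justification of Fubini, and I expect this to be the main obstacle: a Harish-Chandra Schwartz function need not be integrable on $G$, so one cannot bound the double integral by $\|f\|_{L^1(G)}$ times a finite $\varphi$-integral. Instead one uses that $h(\varphi)$ takes values in a fixed $K$-finite (hence finite-rank, of uniformly bounded rank) subspace of operators, so that $\Trace\bigl(\pi_{\sigma,\varphi}(g^{-1})h(\varphi)\bigr)$ is a finite sum of matrix coefficients of the tempered representation $\pi_{\sigma,\varphi}$ between $K$-finite vectors and is therefore dominated by $c(\varphi)\,\Xi(g)$ for a Schwartz function $c$ on $\widehat A_P$, by Harish-Chandra's estimates for such matrix coefficients; combining this with the defining estimate $|f(g)|\le C_N\,\Xi(g)(1+\sigma(g))^{-N}$ of $\mathcal C(G)$ (here $\sigma$ the usual norm function and $\Xi$ Harish-Chandra's spherical function), the integrand on $G\times\widehat A_P$ is dominated by $C_N\,c(\varphi)\,\Xi(g)^2(1+\sigma(g))^{-N}$, which is integrable for $N$ large because $\int_G\Xi(g)^2(1+\sigma(g))^{-N}\,dg<\infty$ and $c\,dm_{P,\sigma}$ is a finite measure. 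Fubini then applies, and the manipulation of the trace is legitimate because $h(\varphi)$ is finite rank.

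Finally, I would note the conceptual reading, consistent with the remark in the text that the result follows from Theorem~\ref{thm-plancherel-range}: that theorem identifies the range of the isometry $\mathcal F$ with the $W_\sigma(G)$-invariant subspace, so $\mathcal F^*$ is orthogonal projection onto that subspace followed by the inverse isometry; using the $W_\sigma$-equivariance of the Knapp--Stein intertwiners and the $W_\sigma$-invariance of $m_{P,\sigma}$, one checks that $\check h$ depends only on the $W_\sigma(G)$-average of $h$ and, on the invariant subspace, inverts $\mathcal F$, so that $h\mapsto\check h$ coincides with $\mathcal F^*$. I would present the direct computation above as the actual proof and mention this viewpoint only as motivation.
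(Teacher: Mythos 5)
Your proposal is correct and follows essentially the same route as the paper: insert the definition \eqref{eq-wave-packet} of $\check h$, interchange the $G$- and $\widehat A_P$-integrations, pull the $g$-integral inside the trace to recognize $\pi_{\sigma,\varphi}(f)^*$, and identify the result as the Hilbert--Schmidt inner product with the $[P,\sigma]$-component of $\hat f$. The one thing you add that the paper leaves entirely implicit is the justification of the Fubini step --- using $K$-finiteness of $h$ to reduce to matrix coefficients, the weak inequality to dominate them by $\Xi$, the Schwartz decay of $f$ and $h$, and the temperedness of $m_{P,\sigma}$ --- which is a genuine point and is handled correctly.
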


\begin{proof}
We calculate that 
\[
\begin{aligned}
\langle  f, \check h\rangle _{L^2 (G)} 
	&= \int_{G} \overline{f(g)} \, \check h (g) \, dg \\
	&= \int_{G} \overline{f(g)} \, \int_{\widehat A_P}\Trace\bigl  (\pi_{\sigma,\varphi} (g^{-1}) \cdot h(\varphi )\bigr ) \, dm_{P,\sigma}(\varphi )   dg \\
	&=  \int_{\widehat A_P}\Trace\Bigl  (\int_{G} \overline{f(g)} \pi_{\sigma,\varphi}(g^{-1})\, dg \cdot  h(\varphi )\Bigr ) \, dm_{P,\sigma}(\varphi ) \\	
	&=  \int_{\widehat A_P}\Trace\bigl  ( \pi_{\sigma,\varphi} (f)^* \,  h(\varphi )\bigr ) \, dm_{P,\sigma}(\varphi ) \\
	&= \langle \hat f, h\rangle _{  L^2  (\widehat A_P  , m_{P,\sigma}  ) \otimes \mathcal L^2  (\Ind_P^G H_{\sigma}  )} ,
	\end{aligned}
\]
as required.
\end{proof}

\begin{corollary}
The wave-packet operator $h\mapsto \check h$, defined on Schwartz functions with values in $K$-finite parts of the Hilbert-Schmidt spaces $\mathcal L^2 (\Ind _P^G H_\sigma )$,  extends to a bounded linear operator 
\[
\bigoplus _{[P,\sigma]} 
	L^2 \bigl (\widehat A_P , m_{P,\sigma}\bigr ) \otimes \mathcal L^2 \bigl (\Ind _P^G H_\sigma \bigr ) 
\longrightarrow 
	L^2 (G)
\]
that is adjoint to the Fourier transform. \qed
\end{corollary}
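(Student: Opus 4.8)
The plan is to identify the wave-packet operator with the Hilbert-space adjoint of the Fourier transform, using the preceding Proposition and the Plancherel isometry of Theorem~\ref{thm-plancherel3}. Write $\mathcal F\colon L^2(G)\to\Pi$ for the Fourier transform of Definition~\ref{def-fourier}, where
\[
\Pi = \bigoplus_{[P,\sigma]} L^2\bigl(\widehat A_P,m_{P,\sigma}\bigr)\otimes\mathcal L^2\bigl(\Ind_P^G H_\sigma\bigr)
\]
is the direct sum Hilbert space in the statement. By Theorem~\ref{thm-plancherel3} the operator $\mathcal F$ is isometric, in particular bounded, so it possesses a bounded Hilbert-space adjoint $\mathcal F^*\colon\Pi\to L^2(G)$. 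The Corollary asserts precisely that the wave-packet operator $h\mapsto\check h$, so far defined only on the dense subspace $\mathcal D\subseteq\Pi$ consisting of finitely-supported families whose components are Schwartz functions valued in $K$-finite parts of the spaces $\mathcal L^2(\Ind_P^G H_\sigma)$, extends to $\mathcal F^*$.

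First I would check that $\check h=\mathcal F^* h$ for each $h\in\mathcal D$. By Theorem~\ref{thm-wave-packet} the function $\check h$ lies in $\mathcal C(G)\subseteq L^2(G)$, so $\langle f,\check h\rangle_{L^2(G)}$ is defined for every $f\in\mathcal C(G)$; and for $K$-finite $f$ the operator-valued functions $\varphi\mapsto\pi_{\sigma,\varphi}(f)$ are the components of the Fourier transform appearing in Definition~\ref{def-fourier}, so $\hat f=\mathcal F f$ for such $f$, whence by the Plancherel bound and continuity this persists for all $f\in\mathcal C(G)$. Summing the identity of the Proposition over the finitely many summands in which $h$ is nonzero gives
\[
\langle f,\check h\rangle_{L^2(G)} \;=\; \langle\hat f,h\rangle_{\Pi} \;=\; \langle\mathcal F f,h\rangle_{\Pi} \;=\; \langle f,\mathcal F^* h\rangle_{L^2(G)}
\]
for every $f\in\mathcal C(G)$. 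Since $\mathcal C(G)$ is dense in $L^2(G)$, this forces $\check h=\mathcal F^* h$.

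It then remains to note that $\mathcal D$ is dense in $\Pi$: a finite partial sum approximates any element of the orthogonal direct sum, the Schwartz functions are dense in $L^2(\widehat A_P,m_{P,\sigma})$, and the $K$-finite parts are dense in $\mathcal L^2(\Ind_P^G H_\sigma)$ by Peter--Weyl applied to the $K\times K$-action on the Hilbert-Schmidt space, so the relevant algebraic tensor products are dense in the Hilbert tensor products. Combined with the boundedness of $\mathcal F^*$, the equality of $h\mapsto\check h$ with $\mathcal F^*$ on the dense set $\mathcal D$ shows that $h\mapsto\check h$ extends uniquely to the bounded operator $\mathcal F^*$, which by construction is adjoint to the Fourier transform.

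The heavy lifting having been done in the Proposition and in the cited Plancherel theorems, there is no genuinely hard step. The one subtlety is bookkeeping: the wave-packet operator is a priori only densely defined, so instead of "forming its adjoint" one must produce the concrete bounded operator $\mathcal F^*$ from the Plancherel isometry and then verify, via the Proposition together with Theorem~\ref{thm-wave-packet} (which guarantees the wave packets are square-integrable), that the two operators agree on the dense domain $\mathcal D$.
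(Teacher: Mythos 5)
Your proof is correct and is essentially the same argument the paper has in mind: the Corollary is stated with a \qed because it follows directly from the preceding Proposition together with the Plancherel isometry (Theorem~\ref{thm-plancherel3}) and the wave-packet theorem (Theorem~\ref{thm-wave-packet}). Your write-up simply spells out the density and adjoint-identification steps that the paper leaves implicit.
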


Now if   $h\in  L^2 \bigl (\widehat A_P , m_{P,\sigma}\bigr ) \otimes \mathcal L^2 \bigl (\Ind _P^G H_\sigma \bigr ) 
$, then define $\operatorname{Av}_{W_\sigma}(h)$ to be the average over the action of the finite group $W_\sigma$:
\[
 \operatorname {Av}_{W_\sigma(G)} (h ) (\varphi)  = \frac{1}{ | W_\sigma | } \sum _{w\in W_\sigma} w \bigl ( h(w^{-1}(\varphi))\bigr ) .
 \]
 The averaging operator 
 \[
  \operatorname {Av}_{W_\sigma (G)} \colon L^2 \bigl (\widehat A_P , m_{P,\sigma}\bigr ) \otimes \mathcal L^2 \bigl (\Ind _P^G H_\sigma \bigr ) 
  \longrightarrow 
  L^2 \bigl (\widehat A_P , m_{P,\sigma}\bigr ) \otimes \mathcal L^2 \bigl (\Ind _P^G H_\sigma \bigr ) 
  \]
  is the orthogonal projection onto the $W_\sigma(G)$-invariant part of the tensor product.
  
\begin{corollary}   
\label{cor-wave-packet-fmla}
The $[P,\sigma]$-component of the Fourier transform of the wave packet $\check h$ is given by the formula
\[
\pi_\varphi (\check h) =  \operatorname {Av}_{W_\sigma(G)} (h )(\varphi).
\]
The other components of the Fourier transform are zero. \qed
\end{corollary}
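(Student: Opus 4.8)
The plan is to obtain the formula formally from the two preceding results: Theorem~\ref{thm-plancherel-range}, which identifies the image of the Fourier transform, and the corollary immediately above, which identifies the wave-packet operator $h\mapsto\check h$ as the Hilbert space adjoint $\mathcal F^{*}$ of the Fourier transform $\mathcal F$.

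First I would record that, by Theorems~\ref{thm-plancherel3} and \ref{thm-plancherel-range}, $\mathcal F$ is an isometric isomorphism from $L^{2}(G)$ onto the closed $W_{\sigma}(G)$-invariant subspace of the full direct sum $\bigoplus_{[P,\sigma]} L^{2}(\widehat A_{P},m_{P,\sigma})\otimes\mathcal L^{2}(\Ind_{P}^{G} H_{\sigma})$ described there; hence $\mathcal F\mathcal F^{*}$ is the orthogonal projection of that direct sum onto the invariant subspace. I would then observe that this projection acts coordinatewise, and that on the $[P,\sigma]$-summand it coincides with the averaging operator $\operatorname{Av}_{W_{\sigma}(G)}$: the finite group $W_{\sigma}(G)$ acts unitarily on $L^{2}(\widehat A_{P},m_{P,\sigma})\otimes\mathcal L^{2}(\Ind_{P}^{G} H_{\sigma})$ --- on the first factor because $m_{P,\sigma}$ is $W_{\sigma}$-invariant (Theorem~\ref{thm-plancherel1}), on the second through the Knapp--Stein intertwiners (Theorem~\ref{thm-intertwiner-existence}) --- and the average over a finite unitary action is, as noted just before the statement, the orthogonal projection onto the fixed-point subspace.

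Granting this, the computation is a one-liner. Let $h$ be a Schwartz function from $\widehat A_{P}$ into a $K$-finite part of $\mathcal L^{2}(\Ind_{P}^{G} H_{\sigma})$, regarded as an element of the full direct sum supported in the single $[P,\sigma]$-summand. By Theorem~\ref{thm-wave-packet} the wave packet $\check h$ lies in $\mathcal C(G)$, and by the preceding corollary $\check h=\mathcal F^{*}h$; therefore the Fourier transform of $\check h$ equals $\mathcal F\mathcal F^{*}h$, which is the coordinatewise average of $h$. Since $h$ is concentrated in the $[P,\sigma]$-summand, this is $\operatorname{Av}_{W_{\sigma}(G)}(h)$ in that summand and zero in every other. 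Evaluating at $\varphi\in\widehat A_{P}$ via Definition~\ref{def-fourier} gives $\pi_{\varphi}(\check h)=\operatorname{Av}_{W_{\sigma}(G)}(h)(\varphi)$, with the remaining components zero.

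There is no genuine obstacle here: all the weight is carried by the Plancherel range theorem quoted above, and the only step that requires a moment's thought is the identification of $\mathcal F\mathcal F^{*}$ restricted to a summand with the explicit averaging operator, which is essentially recorded already in the discussion preceding the statement.
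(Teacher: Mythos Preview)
Your argument is correct and is exactly the intended one: the paper records the corollary with a \qed\ and no proof, treating it as an immediate consequence of the identification of the wave-packet operator as $\mathcal F^{*}$ and of the range of $\mathcal F$ as the $W_{\sigma}(G)$-invariant part, so that $\mathcal F\mathcal F^{*}$ is the averaging projection. Your write-up simply makes explicit what the paper leaves to the reader.
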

 
We are ready to give a formula for our   $C^*_r (G)$-valued inner product on the summand 
\[
\left [  \Compact  \bigl ( \Ind_Q^L\H_\sigma, \Ind_{QN}^G \H_\sigma    \bigr )^{W_{\sigma}(L)}\right ] ^*  \subseteq  C^*_r (G/N)^*
 \]
 as in (\ref{eq-module-recap}).   Write 
 \[
 \Compact  \bigl ( \Ind_Q^L\H_\sigma, \Ind_{QN}^G \H_\sigma    \bigr )^{W_{\sigma}(L)} \cong C_0 \bigl (\widehat A, \Compact(  \Ind_Q^L H_\sigma, \Ind_{QN}^G H_\sigma)\bigr )^{W_{\sigma}(L)} ,
 \]
 and suppose we are given two functions 
 \[
 S_1,S_2\in C_0 \bigl (\widehat A_P, \Compact(  \Ind_Q^L H_\sigma, \Ind_{QN}^G H_\sigma)\bigr )^{W_\sigma(L)}
 \]
 that are in fact  Schwartz functions from $\widehat A_P$ into a finite part of the Hilbert-Schmidt space $\mathcal L^2 (  \Ind_Q^L H_\sigma, \Ind_{QN}^G H_\sigma)$.
  
\begin{theorem}
Given $S_1$ and $S_2$ as above, the inner product 
\[
\langle \overline{S}_1, \overline{S}_2\rangle_{C^*_r (G)} \in C^*_r (G)
\]
 is  the following Harish-Chandra Schwartz function on $G$: 
\begin{equation}
\label{eq-wave-packets}
\langle \overline S_1,\overline S_2 \rangle_{C^*_r (G)} = \Bigl [  g \mapsto \int _{\widehat A_P} \Trace \bigl( S_2(\varphi)^* \pi_{\sigma,\varphi}(g^{-1}) S_1(\varphi) \bigr )\, d m_{P,\sigma} (\varphi) \Bigr ]
\end{equation}
\end{theorem}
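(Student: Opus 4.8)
The plan is to recognize the function on the right of \eqref{eq-wave-packets} as one of Harish-Chandra's wave packets, compute its Fourier transform using Corollary~\ref{cor-wave-packet-fmla}, and compare with the definition \eqref{eq-G-average-inner-product} of the inner product. Write $R = QN$ for the standard parabolic subgroup of $G$ with Levi factor $J$, as in Lemma~\ref{lem-para-para}, and for $\varphi \in \widehat A_P$ set
\[
h(\varphi) = S_1(\varphi)\, S_2(\varphi)^* \in \mathcal L^2\bigl(\Ind_R^G H_\sigma\bigr),
\]
the pointwise composition of the Hilbert-Schmidt-valued maps $\varphi \mapsto S_1(\varphi)$ and $\varphi \mapsto S_2(\varphi)^*$. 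Since $S_1$ and $S_2$ are Schwartz functions valued in $K$-finite parts of $\mathcal L^2(\Ind_Q^L H_\sigma, \Ind_R^G H_\sigma)$, the product $h$ is a Schwartz function from $\widehat A_P$ into a $K$-finite part of $\mathcal L^2(\Ind_R^G H_\sigma)$. By cyclicity of the trace,
\[
\Trace\bigl(S_2(\varphi)^*\, \pi_{\sigma,\varphi}(g^{-1})\, S_1(\varphi)\bigr) = \Trace\bigl(\pi_{\sigma,\varphi}(g^{-1})\, h(\varphi)\bigr),
\]
so the right-hand side of \eqref{eq-wave-packets} is precisely the wave packet $\check h$ of Definition~\ref{def-wave-packet}. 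By Theorem~\ref{thm-wave-packet} it therefore lies in $\mathcal C(G) \subseteq \Csr(G)$, so both sides of \eqref{eq-wave-packets} are honest elements of $\Csr(G)$ and it suffices to compare their images under the structural isomorphism \eqref{eq-cstarG-structure} of Theorem~\ref{thm-reductive-cstar-structure2}.

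For the left-hand side this is immediate: by \eqref{eq-G-average-inner-product} the element $\langle \overline S_1, \overline S_2\rangle_{\Csr(G)}$ has $[R,\sigma]$-component $\operatorname{Av}_{W_\sigma(G)}(S_1 S_2^*) = \operatorname{Av}_{W_\sigma(G)}(h)$ under \eqref{eq-cstarG-structure} and vanishes in every other summand. For the right-hand side I would apply Corollary~\ref{cor-wave-packet-fmla} to the parabolic $R$ and the function $h$: the $[R,\sigma]$-component of the Fourier transform of $\check h$ is $\varphi \mapsto \operatorname{Av}_{W_\sigma(G)}(h)(\varphi)$ and all other components vanish. It remains to note that, by its very construction, the isomorphism \eqref{eq-cstarG-structure} sends a function $f \in \mathcal C(G)$ to the family of operator-valued functions $\varphi' \mapsto \pi_{\sigma',\varphi'}(f)$ --- the action of $\Csr(G)$ on $\Ind_{P'}^G\H_{\sigma'}$ being fibrewise by $\pi_{\sigma',\varphi'}$, since $\Ind_{P'}^G\H_{\sigma'}$ is the continuous field over $\widehat A_{P'}$ with fibre $\Ind_{P'}^G H_{\sigma'\otimes\varphi'}$ at $\varphi'$. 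Hence $\check h$ has exactly the same image in $\bigoplus_{[P',\sigma']}\Compact(\Ind_{P'}^G\H_{\sigma'})^{W_{\sigma'}(G)}$ as $\langle \overline S_1, \overline S_2\rangle_{\Csr(G)}$, and since \eqref{eq-cstarG-structure} is an isomorphism, the two sides of \eqref{eq-wave-packets} coincide.

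I expect the one genuine obstacle to be the bookkeeping with the finite-group actions, namely checking that the averaging over $W_\sigma(G)$ in \eqref{eq-G-average-inner-product} --- performed using the $\operatorname{Ad}_w^*$-twisted unitary automorphisms of $\Ind_R^G\H_\sigma$ provided by Theorem~\ref{thm-intertwiner-existence}, the normalized Knapp-Stein intertwiners --- is, after the identification $\Compact(\Ind_R^G\H_\sigma)^{W_\sigma(G)} \cong C_0(\widehat A_P, \Compact(\Ind_R^G H_\sigma))^{W_\sigma(G)}$, literally the same operation as the averaging operator $\operatorname{Av}_{W_\sigma(G)}$ on $L^2(\widehat A_P, m_{P,\sigma}) \otimes \mathcal L^2(\Ind_R^G H_\sigma)$ that occurs in Corollary~\ref{cor-wave-packet-fmla} and in the Plancherel range Theorem~\ref{thm-plancherel-range}. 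Equivalently, one needs that the $W_\sigma(G)$-action cutting out the range of the Plancherel transform is implemented by the same normalized intertwiners as the $W_\sigma(G)$-action underlying the decomposition of $\Csr(G)$ in Theorem~\ref{thm-reductive-cstar-structure2} --- a compatibility of normalizations rather than new representation-theoretic input. Granting this, together with the routine verification that $h = S_1 S_2^*$ meets the Schwartz and $K$-finiteness hypotheses of Theorem~\ref{thm-wave-packet}, the argument is complete.
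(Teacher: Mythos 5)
Your proposal is correct and follows the same route as the paper: rewrite the right-hand side via cyclicity of the trace as the wave packet $\check h$ with $h = S_1 S_2^*$, then invoke Corollary~\ref{cor-wave-packet-fmla} to see that $\check h$ has Fourier transform $\operatorname{Av}_{W_\sigma(G)}(h)$ in the $[QN,\sigma]$-summand and zero elsewhere, which is exactly what \eqref{eq-G-average-inner-product} prescribes for $\langle \overline S_1,\overline S_2\rangle_{\Csr(G)}$. Your closing remark about reconciling the $W_\sigma(G)$-action from the Knapp--Stein intertwiners with the one in the Plancherel range theorem flags a genuine point of care that the paper's terse proof leaves implicit, though it is indeed a matter of consistent normalization rather than a gap.
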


\begin{proof}
By definition, the inner product is the unique element of $C^*_r (G)$ that is  equal to 
\[
 \operatorname {Av}_{W_\sigma(G)} (S_1 S_2^*) 
\]
in the $[QN,\sigma]$-component of the direct sum decomposition (\ref{eq-cstarG-structure}) of $C^*_r (G)$, and that is zero in the other components of the direct sum decomposition.  See (\ref{eq-G-average-inner-product}).  If we write the right-hand side of (\ref{eq-wave-packets}) as 
\[
 g \mapsto \int _{\widehat A_P} \Trace \bigl(  \pi_{\sigma,\varphi}(g^{-1}) S_1(\varphi) S_2(\varphi)^* \bigr )\, d m_{P,\sigma} (\varphi) 
 \]
 using the trace property, then we see from Corollary~\ref{cor-wave-packet-fmla} that this function has precisely the required property.
\end{proof}



\bibliographystyle{amsalpha}
\bibliography{biblio} 
\end{document}